\newcommand{\Mod}[1]{\ (\text{mod}\ #1)}
\author[1]{Carl P. Dettmann\thanks{Email: Carl.Dettmann@bris.ac.uk}} 
\author[1]{Vitaly Fain\thanks{Email: vf13950@bristol.ac.uk}}
\author[2,3]{Dmitry Turaev\thanks{Email: dturaev@imperial.ac.uk}}
\affil[1]{University of Bristol, School of Mathematics, University Walk, Bristol BS8 1TW, UK}
\affil[2]{Imperial College London, SW7 2AZ, UK}
\affil[3]{ Lobachevsky University of Nizhny Novgorod, Gagarina 23, 603950 Nizhny Novgorod, Russia}
\newtheorem{theorem}{Theorem}[section]
\newtheorem{lemma}[theorem]{Lemma}
\newtheorem{proposition}[theorem]{Proposition}
\newtheorem{remark}[theorem]{Remark}
\DeclareMathOperator{\sn}{sn}
\DeclareMathOperator{\cd}{cd}
\DeclareMathOperator{\dn}{dn}
\DeclareMathOperator{\cn}{cn}
\DeclareMathOperator{\e}{e}
\DeclareMathOperator{\sech}{sech} 
\title{Splitting of separatrices, scattering maps, and energy growth for a billiard inside a time-dependent 
symmetric domain close to an ellipse.}
\begin{document}

\maketitle

\begin{abstract}  We study billiard dynamics inside an ellipse for which the axes lengths 
are changed periodically in time and an $O(\delta)$-small quartic polynomial deformation is added to the boundary.
In this situation the energy of the particle in the billiard is no longer conserved. We show a Fermi acceleration 
in such system: there exists a billiard trajectory on which the energy tends to infinity. The construction 
is based on the analysis of dynamics in the phase space near a homoclinic intersection 
of the stable and unstable manifolds of the normally hyperbolic invariant cylinder $\Lambda$, 
parameterised by the energy and time, that corresponds to the motion along the major axis of the ellipse. 
The proof depends on the reduction of the billiard map near the homoclinic channel to an iterated function system 
comprised by the shifts along two Hamiltonian flows defined on $\Lambda$. The two flows approximate 
the so-called inner and scattering maps, which are basic tools that arise in the studies of the Arnold diffusion; 
the scattering maps defined by the projection along the strong stable and strong unstable foliations $W^{ss,uu}$ 
of the stable and unstable invariant manifolds $W^{s,u}(\Lambda)$ at the homoclinic points. Melnikov type calculations
imply that the behaviour of the scattering map in this problem is quite unusual: it is only defined on a small 
subset of $\Lambda$ that shrinks, in the large energy limit, to a set of parallel lines $t=const$ as $\delta\to 0$.
\end{abstract}

\section{Introduction and main results}

Billiards are Hamiltonian dynamical systems, representing the motion of a point particle inside a domain 
$Q$ (the billiard table) in a straight line with constant speed and elastically bouncing off the boundary
of the domain $\partial {Q}$. The study of billiard systems was initiated by Birkhoff \cite{birkhoff1927periodic}. 
Depending on the boundary shape, billiard's dynamical behaviour may range from completely integrable to chaotic. 
The billiard inside an ellipse is the only known integrable strictly convex billiard \cite{avila2014integrable}. 
The integrability of elliptic billiard is closely connected to the existence of a continuous family of caustics. 
A caustic is a curve such that if a billiard trajectory segment is tangent to it, all other segments of the trajectory 
are also tangent to the same curve. For the trajectories that do not intersect the segment connecting the foci of the 
ellipse, the caustics are confocal ellipses while for the trajectories that intersect this segment the caustics 
are confocal hyperbolas. The period two trajectory along the major axis is hyperbolic, with stable and unstable 
manifolds that coincide. The corresponding motions repeatedly go  through the foci and converge to the major axis.

Billiards with time-dependent boundaries have received much attention in recent years \cite{koiller1995time}. 
One of the fundamental issues here is determining whether the particle energy may grow without bound as a result 
of repeated elastic collisions with the moving boundary. This phenomenon is called Fermi acceleration, after 
Fermi who first proposed it in his studies of highly energetic cosmic rays \cite{fermi1949mesons}. The existence 
of Fermi acceleration has been investigated theoretically and numerically in various billiard geometries. 
The simplest one-dimensional case corresponding to a particle bouncing between two periodically moving walls
(Fermi-Ulam model) and its variants is already very subtle and the existence of Fermi acceleration has been shown
to depend on the class of smoothness of the motion of the wall \cite{pustyl1983ulam, kruger1995acceleration, zharnitsky1998instability}.

For domains in two dimensions and higher, it has been observed 
\cite{koiller1995time, loskutov2000properties, loskutov1999mechanism, kamphorst1999bounded, de2006fermi, shah2010exponential}  
that the acceleration depends on the structure of the phase space of the static ''frozen" billiard. It has been 
conjectured by Losktutov, Ryabov and Akinshin (LRA) \cite{loskutov2000properties} and consequently proved 
in \cite{gelfreich2008fermi} that a sufficient condition for Fermi acceleration is the presence of a Smale horseshoe
in the phase space of the frozen billiard.  On the other hand, it has been shown \cite{kamphorst1999bounded} that
the energy of trajectories in the time-dependent circle billiard stays bounded due to the angular momentum conservation.
Earlier Koiler et al. \cite{koiller1996static} numerically studied time-dependent perturbations of elliptic billiards
and did not observe sustained energy growth. However, more detailed numerical simulations by 
Lenz et al. \cite{lenz2008tunable, lenz2009evolutionary, lenz2010phase} showed slow growth of the particle speed 
when initial conditions belong to the separatrix region. An elliptic billiard with a slow boundary perturbation
and a slow angular velocity was also studied by Itin and Neishtadt \cite{itin2003resonant} who investigated 
the destruction of adiabatic invariants near a separatrix. In this paper we further push the study of Fermi
acceleration in periodically perturbed ellipse.

Fermi acceleration question is a part of the general question of energy growth in a priori unstable Hamiltonian systems,
that also includes Mather acceleration problem \cite{bolotin1999unbounded, delshams2008geometric, gelfreich2008unbounded}. 
Since  time-dependent billiards on a plane are given by a nonautonomous Hamiltonian systems with two and a half degrees of 
freedom, the billiard map is exact symplectic four-dimensional diffeomorphism \cite{gelfreich2012fermi, koiller1995time}.
In particular, invariant KAM-tori, if exist, do not divide the phase space into invariant regions 
and Arnold diffusion \cite{arnold1964instability} may occur. Arnold diffusion refers to the instability 
of action variables in a Hamiltonian system with $n>2$ degrees of freedom of the form 
$H = H_{0}(I) + \epsilon H_{1}(I, \varphi, \epsilon)$ where $(I, \varphi)$ are action-angle variables, $\epsilon$ 
is small, and $H_{0}$ is integrable. Following terminology in \cite{chierchia1994drift}, a Hamiltonian system 
is called a-priori unstable if the integrable part $H_{0}$ has a normally hyperbolic invariant 
manifold $\Lambda$ with stable and unstable manifolds $W^{s,u}(\Lambda)$ that coincide in a homoclinic loop. 
Under small perturbations, $\Lambda$ and $W^{s,u}(\Lambda)$ persist but $W^{s,u}(\Lambda)$ may intersect transversally
along a homoclinic set $\Gamma$. In this case the diffusing orbit stays near $\Lambda$ most of the time, 
occasionally making a trip near $\Gamma$. It was shown by Treschev \cite{treschev2004evolution} and Delshams, de la LLave and Seara
\cite{Delshams2006} that such homoclinic excursions can lead to a systematic drift of the action variable in the
a priori unstable case.

The technique for the analysis of such excursions, which is also used in this paper, goes back to 
the works of Delshams, de la LLave and Seara \cite{delshams2000geometric, delshams2008geometric} where
notions of the inner and scattering maps have been introduced and studied in detail. The inner map is 
the restriction of the dynamics 
to $\Lambda$, and the scattering map relates two points on $\Lambda$ that are connected asymptotically 
in the past and future if the intersection of $W^{s,u}(\Lambda)$ is 
\textit{strongly transverse} \cite{gelfreich2017arnold} along $\Gamma$. The iteration function system obtained 
by successive application of these two maps in an arbitrary order gives the diffusing orbit 
if they do not have common invariant curves \cite{le2007drift, moeckel2002generic, nassiri2012robust, gelfreich2017arnold}.
It was shown e.g. in \cite{gelfreich2017arnold} that the finite-length diffusing orbits of the iterated 
function system on $\Lambda$ correspond to Arnold diffusion in the original diffeomorphism near $\Lambda\cup \Gamma$, 
under the assumption of strong transversality of homoclinic intersections. This result was generalised by 
Gidea, de la Llave and Seara \cite{gidea2014general} to the orbits of semi-infinite length.

In this paper we study the time-dependent four-dimensional billiard map $B$ (defined in Section 3) 
describing the motion of a billiard inside the planar domain with the time-dependent boundary
\begin{equation}
\frac{x^2}{a^{2}(t)} + \frac{y^2}{b^{2}(t)} = 1 +  \frac{2 \delta y^{4}}{b^{4}(t)},
\label{ovalss}
\end{equation}
where $0 < | \delta | \ll 1$ is a constant parameter and $0<b(t)<a(t)$ are periodic $C^{r+1}$-smooth functions 
($r \geq 4$) of time (so $t \in \mathbb{S}^{1}$), and $(x,y) \in \mathbb{R}^{2}$. The boundary (\ref{ovalss}) 
may be viewed as an ellipse with time-periodically changing semi-axes $a(t)$ and $b(t)$ plus  an $O(\delta)$ 
quartic polynomial perturbation superimposed at each fixed value of $t$. Let $\mathcal{E}(t)$ be the kinetic energy
of the particle in the billiard bounded by (\ref{ovalss}). We prove here the following

\begin{theorem}
Let $0 < b(t) < a(t)$ be time-periodic $C^{r+1}$-functions ($r \geq 4$) such that 
the function $\frac{a(t)}{b(t)}$ has a nondegenerate critical point. Then, there exists a constant $C>0$, 
independent of $\delta$, such that for any $\mathcal{E}_{0} \geq \frac{C}{|\delta|}$, there exists 
a billiard trajectory for which the energy $\mathcal{E}(t)$ grows from $\mathcal{E}_{0}$ to infinity.
\end{theorem}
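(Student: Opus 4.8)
I would first regard the high-energy billiard as a singular perturbation of the frozen integrable elliptic billiard, with effective perturbation parameter of order $1/\sqrt{\mathcal{E}}$. Passing to elliptic (action--angle) coordinates for the frozen ellipse, the period-two orbit along the major axis, decorated with the energy $\mathcal{E}$ and the time $t\in\mathbb{S}^{1}$, becomes a two-dimensional normally hyperbolic invariant cylinder $\Lambda=\{\mathcal{E}\ge\mathcal{E}_{0}\}$ in the extended four-dimensional phase space of $B$; for $\mathcal{E}_{0}$ large, $\Lambda$ and $W^{s,u}(\Lambda)$ persist. Granting (see below) that $W^{s}(\Lambda)$ and $W^{u}(\Lambda)$ meet along a homoclinic channel $\Gamma$ on which the intersection is strongly transverse, I would invoke the reduction developed in the preceding sections: near $\Lambda\cup\Gamma$ the recurrent dynamics of $B$ is modelled by the iterated function system on $\Lambda$ generated by two exact-symplectic maps --- the inner map $F_{\mathrm{in}}$ (the restriction of $B$ to $\Lambda$) and the scattering map $F_{\mathrm{sc}}$ along $\Gamma$ --- each $C^{1}$-close to the time-shift of a Hamiltonian flow on $\Lambda$. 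It then suffices to build an orbit of this system with $\mathcal{E}\to\infty$ and to quote the shadowing theorem (Gelfreich--Turaev; Gidea--de la Llave--Seara for semi-infinite orbits) converting it into a genuine billiard trajectory.

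\textbf{The inner map.} On $\Lambda$ the dynamics is the one-dimensional Fermi--Ulam map of a particle between the walls $x=\pm a(t)$ (the quartic term vanishes on $y=0$). Since $a\in C^{r+1}$ with $r\ge4$, this map carries an adiabatic invariant $J$ (with $J\asymp a(t)\sqrt{\mathcal{E}}$), conserved up to an error $O(\mathcal{E}^{-(r-1)/2})$; in coordinates $(J,t)$ on $\Lambda$, $F_{\mathrm{in}}$ is, to that accuracy, a twist $t\mapsto t+\omega(J,t)$ with $\omega\asymp a(t)/\sqrt{\mathcal{E}}\to0$. Hence $F_{\mathrm{in}}$ alone produces no acceleration, but iterating it lets me park $t$ as close as I like to any prescribed value while changing $J$ only by the (summable) adiabatic errors.

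\textbf{The scattering map and its Melnikov analysis.} Next I would compute the Melnikov function controlling the splitting of $W^{s,u}(\Lambda)$ along the homoclinic excursion through the foci. It splits into a contribution from the $O(\delta)$ quartic deformation and one from the time dependence of $a,b$ that is of order $1/\mathcal{E}$ in the high-energy limit; for $\mathcal{E}\ge C/|\delta|$ the former dominates, giving simple zeros --- hence the strong transversality assumed above --- on a subset $U_{\delta}\subset\Lambda$ which, as $\mathcal{E}\to\infty$, collapses onto finitely many lines $t=t_{j}$ and which is nonempty for all large $\mathcal{E}$ precisely because $a/b$ has a nondegenerate critical point (the $t_{j}$ being among its critical points), the width of $U_{\delta}$ shrinking as $\delta\to0$. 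On $U_{\delta}$ the map $F_{\mathrm{sc}}$ is defined and $C^{1}$-close to the time-shift of the Hamiltonian flow of the reduced Melnikov potential $L(J,t)$; because of the time dependence $\partial_{t}L\not\equiv0$, so $F_{\mathrm{sc}}$ changes $J$ by a nonzero amount $\Delta J_{\mathrm{sc}}(J,t)$ whose sign I can prescribe by choosing $t_{j}$ and the side of $t_{j}$.

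\textbf{Building the accelerating orbit, and the main obstacle.} I would then construct $(J_{n},t_{n})$ inductively: from $(J_{n},\cdot)$ with $J_{n}$ large, apply $F_{\mathrm{in}}$ repeatedly (paying only summable adiabatic errors) to land in $U_{\delta}$ on the side of some $t_{j}$ where $\Delta J_{\mathrm{sc}}>0$, then apply $F_{\mathrm{sc}}$, obtaining $J_{n+1}\ge J_{n}+\Delta J_{\mathrm{sc}}-(\text{error})$; since both $F_{\mathrm{sc}}$ and $F_{\mathrm{sc}}^{-1}$ are available, occasional deficits can be corrected. Because $r\ge4$, the accumulated errors $\sum_{n}O(J_{n}^{-(r-1)/2})$ converge, so as long as the per-step gain does not decay too fast the sequence $J_{n}$, hence $\mathcal{E}_{n}$, tends to infinity, and the shadowing theorem yields the desired billiard trajectory. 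I expect the main obstacle to be exactly the quantitative part of the previous paragraph together with this last point: carrying out the Melnikov computation along the focus-to-focus homoclinic orbit, proving that the $\delta$-contribution overtakes the $1/\mathcal{E}$-contribution precisely for $\mathcal{E}\ge C/|\delta|$ with $C$ independent of $\delta$, pinning down the degenerate domain $U_{\delta}$ and its link with the critical points of $a/b$, and --- most delicately --- bounding $\Delta J_{\mathrm{sc}}$ from below on this thin, energy-dependent domain so that the ratchet $F_{\mathrm{in}}\circ F_{\mathrm{sc}}$ does not stall as the energy grows; the persistence of $\Lambda$, the IFS reduction, and the bookkeeping of adiabatic errors are comparatively routine once transversality and this lower bound are secured.
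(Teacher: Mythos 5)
Your proposal follows essentially the same route as the paper: persistence of the normally hyperbolic cylinder, the inner map approximated by the flow of $H_{in}=2\sqrt{2E}\,a(t)$ (your adiabatic invariant $J\asymp a(t)\sqrt{\mathcal{E}}$), a Melnikov computation showing the scattering map is defined, at energies $\mathcal{E}\sim 1/|\delta|$, only on a thin $t$-window around the nondegenerate critical points of $a/b$, and an inner/scattering ratchet converted to a true orbit by the shadowing lemmas of Gidea--de la Llave--Seara. The one point you flag as the main open obstacle --- that the ratchet might stall --- is resolved in the paper by renormalizing the scaling parameter $\varepsilon$ at each energy scale so that each excursion along a level curve of $H_{out}=2\sqrt{2E}\,c(t)$ over the window of width $\Delta t\sim\min(|\delta|/\varepsilon,1)$ gains $\Delta H_{in}\gtrsim \min(\delta^{2}/\varepsilon^{2},1)$, a fixed fraction of $H_{in}$, whence the physical energy grows geometrically.
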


We note that if one replaces the $O(\delta)$ quartic polynomial perturbation in the right-hand side of (\ref{ovalss})
by another $O(\delta)$ perturbation that also destroys integrability of the static frozen ellipse for every fixed $t$ 
(for instance symmetric entire perturbations studied in \cite{delshams1996poincare}), then Theorem 1.1 should still hold.
We however restrict ourselves to a particular form of the perturbation, in order to keep the computations explicit.
We do not know whether the measure of the set of orbits for which the energy grows to infinity is positive. However, the same construction we use in the proof can show the existence of ``diffusing'' orbits which take 
every sufficiently large value of energy infinitely many times, following an arbitrary given itinerary,
so the evolution of energy is sensitive to initial conditions and has to be described by some random process.

The proof of Theorem 1.1 is based on the study of inner and scattering maps and an application of the theory 
developed in \cite{gelfreich2017arnold}.  The scheme of the proof is as follows.

Each time the particle hits the boundary of (\ref{ovalss}), one records the collision time moment $t\in \mathbb{S}^{1}$,
the particle kinetic energy $\mathcal{E}$, the angular variable $\varphi$ that determines the position of the collision
point on the billiard boundary, and the post-collision reflection angle $\theta$. Then, the particle motion is described
by the billiard map $B$ in the four-dimensional space of variables $(\varphi, \theta, \mathcal{E}, t)$. We assume that
the speed $w  = \sqrt{2\mathcal{E}}$ of the particle is large compared to the speed with which the boundary moves. This
invokes the presence of two time scales in the problem: the variables $\mathcal{E}$ and $t$ vary slowly, 
while $(\varphi, \theta)$ change fast. To make the presence of different time scales more apparent, we scale variables
like it was done in \cite{bolotin1999unbounded}: take large 
speed $w^{*}$, introduce a small parameter $\varepsilon=\frac{1}{w^{*}}$ and the rescaled energy 
$E=\varepsilon^2 \mathcal{E}$. Then, for any bounded interval of $E$, the map $B$ becomes near-identity 
in terms of $(E,t)$, i.e., it becomes $\varepsilon$-close to the two-dimensional billiard map $B_{s}$ 
corresponding to static boundary (\ref{ovalss}) at fixed $t$, with augmented phase space to account for $(E,t)$.
Hence, the map $B$ may be expanded in series of $\varepsilon=\frac{1}{w^{*}}$ and $\delta$. We will give full details
of this construction in section 3. 

If the particle moves along the major axis, it will never leave the major axis. This motion corresponds to an invariant
manifold $\Lambda$ in the phase space of the billiard map $B$, a two-dimensional cylinder parametrised by 
$(\mathcal{E},t)$. In the static billiard, the motion along the major axis is a saddle periodic orbit
for each frozen value of $\mathcal{E}$ and $t$. Therefore, because the map $B$ is close so the static billiard map
in the rescaled variables, it follows that the cylinder $\Lambda$ is normally hyperbolic. In particular, it has 
three-dimensional stable and unstable manifolds $W^{s,u}(\Lambda)$ foliated by the strong stable and unstable foliations
$W^{ss,uu}(\Lambda)$. These geometric objects are inherited by $B$ from the stable and unstable separatrices
of the static billiard's motion along the major axis. 

The restriction of $B$ to $\Lambda$ is close to identity (when written in the coordinates $(E,t)$ where $E$
is the rescaled energy). It is well-known \cite{benettin1994hamiltonian,neishtadt1984separation} that a near-identity
$C^{l}$-smooth (or analytic) symplectic map $x_{1} = x_{0} + \nu f(x_{0})$ with small $\nu$ is approximated by
a time-$\nu$ shift along the orbits of an autonomous Hamiltonian system up to accuracy $O(\nu^{l+1})$
(or exponential accuracy in $\nu$ for analytic case). Therefore, the map 
$B \mid _{\Lambda}$ (which we call the inner map) is approximated to a high level of accuracy by 
the time-shift along level curves of a certain Hamiltonian $H_{in}$. 

As we mentioned, the billiard in ellipse is integrable, so the stable and unstable manifolds of the periodic orbit
that corresponds to the motion along the major axis coincide. It is well-known that the resulting separatrix surface
is a graph of a function $\theta$ of $\varphi$, where $\varphi\in(0,\pi)$. Therefore, 
for any small $\beta>0$, the (perturbed) stable and unstable manifolds $W^{s,u}(\Lambda)$
at sufficiently small $\varepsilon$ and $\delta$ can be expressed as 
graphs $\theta=\theta^{s,u}_{\varepsilon, \delta}(\varphi, E, t)$ (see section 4.3) 
over the interval $\varphi \in (\beta, \pi - \beta)$. They are $O\left(\varepsilon, \delta\right)$-close
to the unperturbed manifolds $\theta=\theta^{s,u}_{0,0}$, therefore they can be expanded in series 
of $\varepsilon, \delta$. The zeroes of the difference (see section 4.3) 
$\theta^{s}_{\varepsilon, \delta} - \theta^{u}_{\varepsilon, \delta}$  correspond to the primary 
homoclinic intersections $(\varphi_{0}, \theta_{0}, \mathcal{E}_{0}, t_{0}) \in \Gamma $ where $\Gamma$
is the homoclinic set. If the corresponding unstable leaf of the foliation $W^{uu}(\Lambda)$ intersects
transversely the stable manifold $W^{s}(\Lambda)$ at the homoclinic point
$(\varphi_{0}, \theta_{0}, \mathcal{E}_{0}, t_{0})$, the intersection is called strongly transversal (see section 4).
When strong transversality condition is satisfied, projecting from the homoclinic point to $\Lambda$ along 
the corresponding unstable fiber of $W^{u}(\Lambda)$ and stable fiber of $W^{s}(\Lambda)$ produces a pair of points
in $\Lambda$ which are related by what is called the scattering map $S_{\Gamma}$. Its domain of definition 
$\bar{\Lambda} \subset \Lambda$ is the projection of the set of strong-transverse primary homoclinic points 
by the strong unstable fibers; the image $S_{\Gamma} (\bar{\Lambda})\subset \Lambda$ is the projection of 
the set of primary homoclinic points by the strong stable fibers. 
 
The scattering map is exact symplectic \cite{delshams2008geometric,gelfreich2017arnold}. For any bounded interval
of the rescaled energy $E$, this map
is close to identity, so it is well approximated by the time-$\varepsilon$ shift along the level curves
of a Hamiltonian $H_{out}$. We build the trajectory whose energy grows to infinity by following level 
curves of $H_{in}$ and $H_{out}$, and switching to the level curve which leads to the larger energy gain
in the immediate future. This construction is similar to \cite{gelfreich2008unbounded}, however the application
of inner and scattering maps is novel. Formally, we define $H_{in}$ and $H_{out}$ everywhere on $\Lambda$ but 
the switch of the orbit of $B$ to the level curve of $H_{out}$ is only allowed at the domain of 
definition of the scattering map. We find that this domain has a non-trivial structure in our problem.

\begin{figure}[h]
    \centering
    \includegraphics[width=0.4\textwidth]{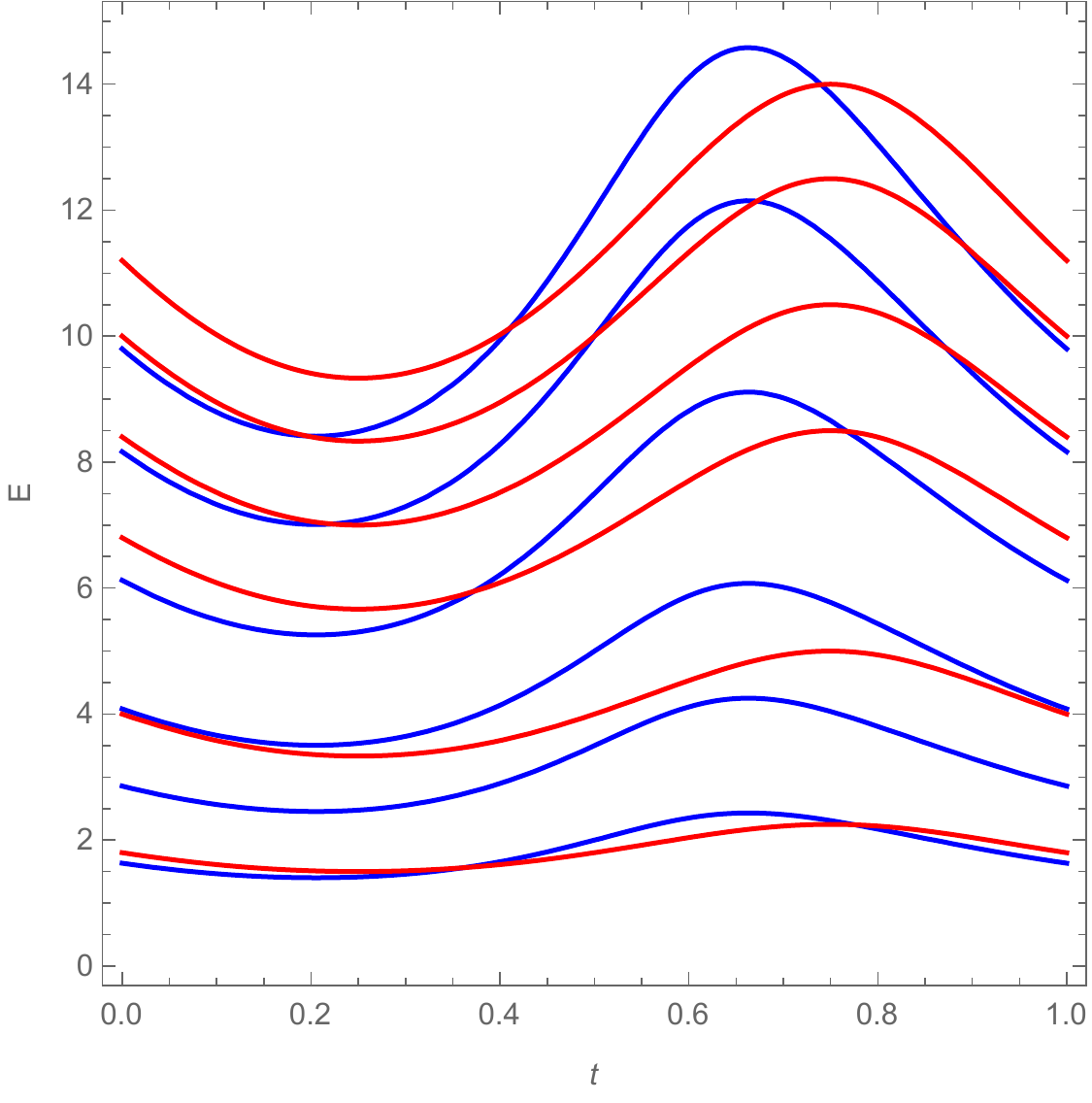}
    \caption{The level lines of $H_{in}$ (in red) and $H_{out}$ (in blue), in scaled variables $(E,t)$, for $a(t) = 5 + \sin (2\pi t)$, $b(t) = 2 - \cos (2 \pi t)$.}
    \label{fig:}
\end{figure}

Namely, we find that the strong transversality is only satisfied in the limit of large energy if $\delta> 0$, and 
that at $\delta=0$ the projection of the primary homoclinic set to the cylinder $\Lambda$ by the strong unstable 
fibers shrinks to a set of parallel lines $t=const$ as $\varepsilon=0$. To this aim, we put $\delta=0$ 
and investigate the splitting of invariant manifolds  $W^{s,u}(\Lambda)$. The first term of the power expansion 
in $\varepsilon$ for the distance between $(\theta^{s}_{\varepsilon, 0} - \theta^{u}_{\varepsilon,0})$ between
perturbed $W^{s,u}(\Lambda)$ is given
by the so-called Melnikov function $M_{1}(\varphi, \theta, E, t)$. Non-degenerate zeros of $M_1$ correspond
to transverse primary intersections of 
$W^{s}$ and $W^{u}$, if $\varepsilon$ is small enough. We show
\begin{theorem}
Consider the time-dependent elliptic billiard map $B$ \textit{without} the quartic perturbation (i.e. $\delta=0$). 
The Melnikov function associated to the splitting of invariant manifolds  $W^{s,u}(\Lambda)$ has zeroes only
for the times $t^{*}$ such that $\frac{d}{dt}\left(\frac{a(t^{*})}{b(t^{*})}\right) = 0$, for all values of 
energy and reflection angle. If $t^{*}$ is a nondegenerate critical point of $\frac{a(t)}{b(t)}$, there exists
a corresponding transverse intersection of $W^{s,u}(\Lambda)$ along a $2$-dimensional homoclinic surface where 
$(\theta,t)=(\theta^{s,u}_{0,0}, t^*)+O(\epsilon)$ are smooth functions of $(\mathcal{E},\varphi)$.
\end{theorem}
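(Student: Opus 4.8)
The plan is to compute the Melnikov function $M_1$ in closed enough form to exhibit a factorization through $\tfrac{d}{dt}(a/b)$, to verify that the remaining factor is nowhere zero, and then to deduce the two assertions from the standard Melnikov / implicit-function argument.

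\emph{The Melnikov sum.} At $\delta=0$ the frozen billiard is integrable for every $t$, so $W^{s,u}(\Lambda)$ split off a common separatrix surface and, by Section 4.3, are graphs $\theta=\theta^{s,u}_{\varepsilon,0}(\varphi,\mathcal{E},t)$ over $\varphi\in(\beta,\pi-\beta)$ on each of the two branches of the unperturbed separatrix loop. Writing $\theta^{s}_{\varepsilon,0}-\theta^{u}_{\varepsilon,0}=\varepsilon M_1+O(\varepsilon^2)$, I would obtain $M_1$ in the usual way for exact symplectic maps, from the difference of the primitive functions of $W^{s,u}(\Lambda)$: since free motion is insensitive to the boundary and the perturbation acts only at the reflections, this difference is, at first order, a sum over the bi-infinite unperturbed homoclinic billiard orbit $\{\varphi_n\}_{n\in\mathbb Z}$ of the first variation, caused by the slow drift of $t$, of the generating function (chord length) of the frozen billiard map, renormalised by subtracting its value on $\Lambda$. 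Absolute convergence holds because along the unperturbed separatrix the bounce points converge exponentially to the endpoints of the major axis, where the renormalised summand vanishes. To leading order $t$ is frozen at the value $t_0$ of the homoclinic point, so $M_1$ is \emph{linear} in $(a'(t_0),b'(t_0))$, with coefficients that are explicit functions of $(\varphi,\theta,\mathcal{E})$ and of the frozen shape.

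\emph{Factorization.} The billiard map in the angle variables $(\varphi,\theta)$ is invariant under the similarity $(a,b)\mapsto(\lambda a,\lambda b)$; equivalently, a self-similar boundary motion ($a(t)/b(t)\equiv\mathrm{const}$) is removed by a time-dependent rescaling of the spatial coordinates and so produces no first-order splitting. Hence $M_1$ vanishes identically whenever $(a'(t_0),b'(t_0))\parallel(a(t_0),b(t_0))$, and being linear in $(a',b')$ it must be proportional to $a'(t_0)b(t_0)-a(t_0)b'(t_0)=b(t_0)^2\,\tfrac{d}{dt}(a/b)(t_0)$:
\[
M_1(\varphi,\theta,\mathcal{E},t)\;=\;\Phi(\varphi,\theta,\mathcal{E},t)\,\frac{d}{dt}\!\left(\frac{a(t)}{b(t)}\right),
\]
where, again by scaling invariance, $\Phi$ depends on $t$ only through $a(t)/b(t)$. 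A point needing care is that the moving-wall reflection law perturbs $\theta$ directly and involves $a'$ and $b'$ separately; the argument above shows that nonetheless only $(a/b)'$ survives after summation, the ``scaling part'' of the perturbation merely transporting homoclinic points along $\Lambda$ rather than transversally to it.

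\emph{Nonvanishing of $\Phi$ and conclusion.} This is the main obstacle and the one genuinely computational step. I would use the explicit form of the unperturbed separatrix of the elliptic billiard — the orbits through the foci, in elliptic coordinates, on which the billiard map is conjugate to a shift of a logarithmic parameter — so that the renormalised Melnikov sum becomes a lattice sum of hyperbolic functions (the $k\to1$ degeneration of $\sn,\cn,\dn$) of the type $\sech^{2}$; summing it and checking that the summand has a definite sign gives $\Phi\neq0$ everywhere. Granting this, $M_1(\varphi,\theta,\mathcal{E},t)=0$ forces $\tfrac{d}{dt}(a/b)(t)=0$, which is the first assertion, uniformly in $\varphi,\theta,\mathcal{E}$. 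If $t^{*}$ is a nondegenerate critical point of $a/b$, then $\partial_t M_1|_{t=t^{*}}=\Phi\cdot\tfrac{d^{2}}{dt^{2}}(a/b)(t^{*})\neq0$, so the implicit function theorem applied to $\theta^{s}_{\varepsilon,0}-\theta^{u}_{\varepsilon,0}=\varepsilon\,\Phi\,\tfrac{d}{dt}(a/b)+O(\varepsilon^{2})=0$ yields, for all sufficiently small $\varepsilon$, a unique root $t=t(\varphi,\mathcal{E})=t^{*}+O(\varepsilon)$ depending smoothly on $(\varphi,\mathcal{E})$, along which $\theta=\theta^{s,u}_{0,0}(\varphi)+O(\varepsilon)$. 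This smooth $2$-dimensional surface is the claimed homoclinic intersection, and since $\partial_t(\theta^{s}_{\varepsilon,0}-\theta^{u}_{\varepsilon,0})\neq0$ there, the intersection of $W^{s}(\Lambda)$ and $W^{u}(\Lambda)$ along it is transverse.
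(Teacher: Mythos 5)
Your strategy matches the paper's in outline: write the first-order splitting as a Melnikov sum over the unperturbed homoclinic billiard orbit, show it equals $\frac{d}{dt}(a/b)$ times a nowhere-vanishing factor, and finish with the implicit function theorem (your IFT step is fine, and setting the sum up via the generating function rather than via the paper's first-integral formula $M_{1}=\sum_{n}\langle\nabla I(x_{n+1}),B_{1}(x_{n})\rangle$ is a legitimate variant). The decisive gap is that the analytic core of the theorem --- the nonvanishing of the factor $\Phi$ --- is granted rather than proved, and the justification you sketch would fail as stated. You propose to get $\Phi\neq0$ by ``checking that the summand has a definite sign,'' but the individual terms of the Melnikov sum do \emph{not} have a definite sign along the separatrix: for instance, the $\dot b$-part of $f_{1}I_{\varphi}$ carries factors $(\lambda-e^{2\tau})(\lambda^{2}-e^{2\tau})$ that change sign as $\tau$ runs over the homoclinic orbit. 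The positivity appears only after the lattice sum is actually performed: the paper's Appendix B carries out a residue computation with elliptic functions, pole by pole, and only the \emph{total} collapses to a positive multiple of $\sum_{n}\sech^{2}(\tau+nh)$. That summation is the substance of the first assertion; it cannot be replaced by inspecting the sign of the summand.

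The factorization step also rests on an unproved claim. A time-dependent dilation $q\mapsto\rho(t)q$ is not a symmetry of the billiard flow: in the rescaled frame the free flight is no longer rectilinear (one picks up an inertial potential $\sim\rho\ddot\rho\,|q|^{2}$ and a time-dependent kinetic term) and the elastic reflection law off the moving wall is altered, so the assertion that the self-similar component of the boundary motion produces only a tangential displacement of the separatrix at order $\varepsilon$ is precisely what needs proof, not an input. In the paper this proportionality to $\dot b a-\dot a b$ is not derived from symmetry at all; it falls out of the same explicit summation. (A smaller inaccuracy: the paper's nonvanishing factor is $-\frac{4b^{3}}{v}\sum_{n}\sech^{2}(\tau+nh)$, which depends on $b$ and on the speed $v$ separately, not on $t$ only through $a/b$; this does not affect the logic.)
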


In the cylinder $\Lambda$, the image of the two-dimensional homoclinic intersection found in this theorem by the 
projection along the strong-unstable fibers is confined in a narrow strip around the critical lines $t=t^*$. This means
that the scattering map is not properly defined if $\delta=0$ (to estimate the domain of definition of the scattering 
map we would need further expansion of the separatrix splitting function in powers of $\varepsilon$, but we 
suspect that it is small beyond all orders). We conjecture that the same structure is characteristic of a more general case
of an integrable system with slowly varied parameters.  

In order to have a scattering map defined, we add the $\delta$-dependent term in (\ref{ovalss}).
The nonintegrability of static elliptic billiards subject to polynomial perturbations was studied 
in \cite{delshams1996poincare, lomeli1996perturbations, tabanov1994separatrices, levallois1993separation}. 
While we use the Melnikov function calculations from these works, we also develop a novel Melnikov function technique
for the computation of the scattering map for systems with normally-hyperbolic invariant manifolds 
(e.g. time-dependent billiards). In particular, we show that the domain $\bar \Lambda$
of definition of the scattering map $S_{\Gamma}$ 
in our situation has an unusual shape at small $\delta$ - it contains essential curves only at very large energies. 
Namely, to the first order in $\frac{1}{\sqrt{\mathcal{E}}}$ and $\delta$ the domain $\bar{\Lambda}$ 
is given by 
$$\sqrt{\mathcal{E}} > \frac{|a\frac{db}{dt} - b \frac{da}{dt}|}{| \delta|}  \;\phi(t),$$
where $\phi(t)$, defined by (\ref{phi}) is a strictly positive, continuous, periodic function of $t$. 
More precisely, we have the following

\begin{theorem}
For any constant $k>0$ there exists $C>0$ such that 
all points $(\mathcal{E},t)$ in the cylinder $\Lambda$, which satisfy
\begin{equation}
\sqrt{\mathcal{E}} > \frac{|a\frac{db}{dt} - b \frac{da}{dt}|}{| \delta|}  \phi(t) + k, \qquad
\mathcal{E} \geq \frac{C(k)}{\delta},
\label{eqbarlambda+}
\end{equation}
belong to the domain $\bar{\Lambda}$ of the definition of the scattering map $S_{\Gamma}$.
Vice versa, the points which satisfy
\begin{equation}
\sqrt{\mathcal{E}} < \frac{|a\frac{db}{dt} - b \frac{da}{dt}|}{| \delta|}  \phi(t) - k,
\label{eqbarlambda-}
\end{equation}
do not belong to $\bar{\Lambda} \cap \{\mathcal{E} \geq \frac{C(k)}{\delta}\}$.
\end{theorem}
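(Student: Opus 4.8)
The plan is to obtain the domain $\bar\Lambda$ as the set of points on $\Lambda$ that are reachable, under the strong-unstable projection, from a \emph{non-degenerate zero} of the full separatrix splitting function, and then to localize those zeros by a Melnikov analysis that combines the $\varepsilon$-expansion with the $\delta$-expansion. Concretely, I would write the splitting distance as
\begin{equation}
\theta^{s}_{\varepsilon,\delta}-\theta^{u}_{\varepsilon,\delta}
=\varepsilon\, M_{1}(\varphi,\theta,E,t)+\delta\, N_{1}(\varphi,\theta,E,t)+O(\varepsilon^{2},\varepsilon\delta,\delta^{2}),
\label{eq:splitproposal}
\end{equation}
where $M_1$ is the Melnikov function of Theorem~1.3 (coming from the time-dependence of the axes) and $N_1$ is the Melnikov function for the static quartic perturbation, available from \cite{delshams1996poincare,levallois1993separation,tabanov1994separatrices}. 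By Theorem~1.3, for $\delta=0$ the only zeros of $M_1$ sit on the critical lines $t=t^*$, and near such a line $M_1$ vanishes like $\frac{d}{dt}\!\left(\frac{a(t)}{b(t)}\right)$, i.e.\ linearly in $(t-t^*)$ with a coefficient that is, up to known factors, $a\frac{db}{dt}-b\frac{da}{dt}$ evaluated appropriately (this is where the combination $|a\,b'-b\,a'|$ enters). Meanwhile $N_1$ is a nonzero, explicitly computable function that one normalizes to produce the profile $\phi(t)$ in (\ref{phi}). Setting the leading part of (\ref{eq:splitproposal}) to zero gives, schematically, $\varepsilon\cdot(\text{slope})\cdot(t-t^*)\sim -\delta\, N_1$, so a zero exists near time $t$ precisely when the amplitude of the $\delta$-term dominates the available room in the $\varepsilon$-term; rescaling $\varepsilon=1/\sqrt{2\mathcal{E}}$ turns this into the inequality $\sqrt{\mathcal{E}}\gtrsim \frac{|a b'-b a'|}{|\delta|}\phi(t)$.

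The key steps, in order, would be: (i) recall from Section~4.3 the exact expression for the unperturbed separatrix and the two Melnikov integrals $M_1$, $N_1$, and verify that $N_1$ does not vanish identically along the unperturbed homoclinic surface, isolating the factor that defines $\phi(t)$ via (\ref{phi}); (ii) expand $M_1$ about the critical lines $t=t^*$ and extract its linear term, exhibiting the coefficient $a\frac{db}{dt}-b\frac{da}{dt}$ and using non-degeneracy of the critical point of $a/b$ to guarantee this coefficient is bounded away from zero on compact $t$-intervals containing the relevant range; (iii) apply the implicit function theorem to the leading-order balance to produce a non-degenerate zero $(\varphi,\theta,t)\mapsto$ a point of $\Gamma$ whenever (\ref{eqbarlambda+}) holds, the extra slack $+k$ absorbing the $O(\varepsilon^2,\varepsilon\delta,\delta^2)$ remainder uniformly once $\mathcal{E}\ge C(k)/\delta$; (iv) conversely, show that when (\ref{eqbarlambda-}) holds the leading term of (\ref{eq:splitproposal}) is bounded away from zero by $\sim k\,\varepsilon$, so by the same uniform remainder estimate the full splitting function has no zero there, hence no homoclinic point projects onto such $(\mathcal{E},t)$; (v) finally, translate the statement about zeros of the splitting function into a statement about $\bar\Lambda$ by noting that the strong-unstable projection is $O(\varepsilon,\delta)$-close to the vertical projection $(\varphi,\theta,\mathcal{E},t)\mapsto(\mathcal{E},t)$, so the $(\mathcal{E},t)$-coordinates of the homoclinic point and of its image under the projection agree up to a correction already covered by the slack $k$ and the condition $\mathcal{E}\ge C(k)/\delta$.

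The main obstacle I anticipate is step (iii)–(iv): making the error bounds in (\ref{eq:splitproposal}) \emph{uniform in the large-energy limit}. The natural $\varepsilon$-expansion of the invariant manifolds loses control as $E$ (equivalently $\mathcal{E}$) grows, because the homoclinic excursion becomes long and the higher-order Melnikov remainders can pick up powers of $\mathcal{E}$; the condition $\mathcal{E}\ge C(k)/\delta$ is precisely the threshold at which the $\delta$-term is large enough to dominate while the remainder is still controlled, and pinning down the constant $C(k)$ requires careful tracking of how each term in the double expansion scales with $\mathcal{E}$. A secondary technical point is ensuring that the zero produced is \emph{non-degenerate} (so that it genuinely gives a transverse, in fact strongly transverse, intersection and thus a well-defined scattering map on that part of $\bar\Lambda$): this needs the derivative of the leading balance with respect to $t$ to be non-vanishing, which again follows from the non-degeneracy of the critical point of $a/b$ but must be checked to survive the perturbation. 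Once these uniform estimates are in place, the two inequalities (\ref{eqbarlambda+}) and (\ref{eqbarlambda-}) follow by the quantitative implicit function theorem and its contrapositive, respectively.
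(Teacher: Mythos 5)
Your overall strategy (double Melnikov expansion $\varepsilon M_1+\delta M_2+O(\varepsilon^2+\delta^2)$, a threshold obtained by balancing the two leading terms, implicit function theorem for non-degenerate zeros, and projection along strong-unstable fibers that are $O(\varepsilon)$-close to $(E,t)=\mathrm{const}$) matches the paper's, and your diagnosis of the role of $\mathcal{E}\geq C(k)/\delta$ is correct: in rescaled variables it is exactly the condition $|\delta|\gg\varepsilon^2$ under which the remainder of $d/\delta$, which is $O(\varepsilon^2/\delta+\delta)$, is uniformly small. However, the central step — where the function $\phi(t)$ and the threshold actually come from — is wrong as written. The paper factorizes $M_1=f(t)g(\tau,t)/v$ with $f=a\dot b-b\dot a$ and $g>0$, and $M_2=j(\tau,t)$, and then, \emph{for fixed} $(\mathcal{E},t)$, asks whether the leading-order equation $\frac{\varepsilon f(t)}{\delta v}=-\frac{j(\tau,t)}{g(\tau,t)}$ is solvable in the homoclinic parameter $\tau$. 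Since $j/g$ is a bounded, odd, periodic function of $\tau$ with one maximum and one minimum per period, solvability is a \emph{range} condition: the left-hand side must not exceed $\max_\tau|j/g|=\bigl(\sqrt{2}\,\phi(t)\bigr)^{-1}$ in absolute value. This is what produces $\sqrt{\mathcal{E}}\gtrless\frac{|a\dot b-b\dot a|}{|\delta|}\phi(t)$, and the margin $k$ is needed because the zeros degenerate precisely at the critical points of $\tau\mapsto j/g$, i.e.\ exactly on the boundary curve. Your proposal instead locates the balance by linearizing $M_1$ in $t$ about the critical lines $t=t^*$ and describes $\phi$ as a ``normalization of $N_1$''; neither is correct. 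The combination $a\frac{db}{dt}-b\frac{da}{dt}$ in the theorem is the \emph{value} of the factor $f$ at the given $t$ (the statement is pointwise over all $t\in\mathbb{S}^1$, not localized near $t^*$), and $\phi$ is the extremum over $\tau$ of the \emph{ratio} $|g/j|$ of the two Melnikov profiles — it cannot be read off from $M_2$ alone.

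A second concrete error: you assert that non-degeneracy of the zero ``needs the derivative of the leading balance with respect to $t$ to be non-vanishing, which follows from the non-degeneracy of the critical point of $a/b$.'' The relevant transversality for strong transversality (and hence for $\pi^u$ being a local diffeomorphism) is $\partial\bar d/\partial\tau\neq 0$, i.e.\ simplicity of the zero in the fast variable along the separatrix; the non-degeneracy of the critical point of $a/b$ plays no role in Theorem 1.3 (it enters only in Theorem 1.2 and in the energy-growth Lemma 5.1). Without the range argument in $\tau$ and the correct identification of the degenerate zeros, your steps (iii)–(iv) do not yield either inclusion in the theorem.
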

It is seen from these formulas that $\bar{\Lambda}$ contains a circle $\mathcal{E}=const$ only if 
$\mathcal{E}> C_{1} \delta^{-2}$ where $C_{1}$ is some constant. In this region of energies techniques of
\cite{gelfreich2008fermi} can be applied in order to prove the existence of orbits for which the energy tends to 
infinity starting from
$\mathcal{E}\sim \delta^{-2}$. Our Theorem 1.1 gives a stronger result by allowing to start at much lower energies 
$\mathcal{E}\sim \delta^{-1}$. 

\begin{figure}[h]
    \centering
    \includegraphics[width=0.6\textwidth]{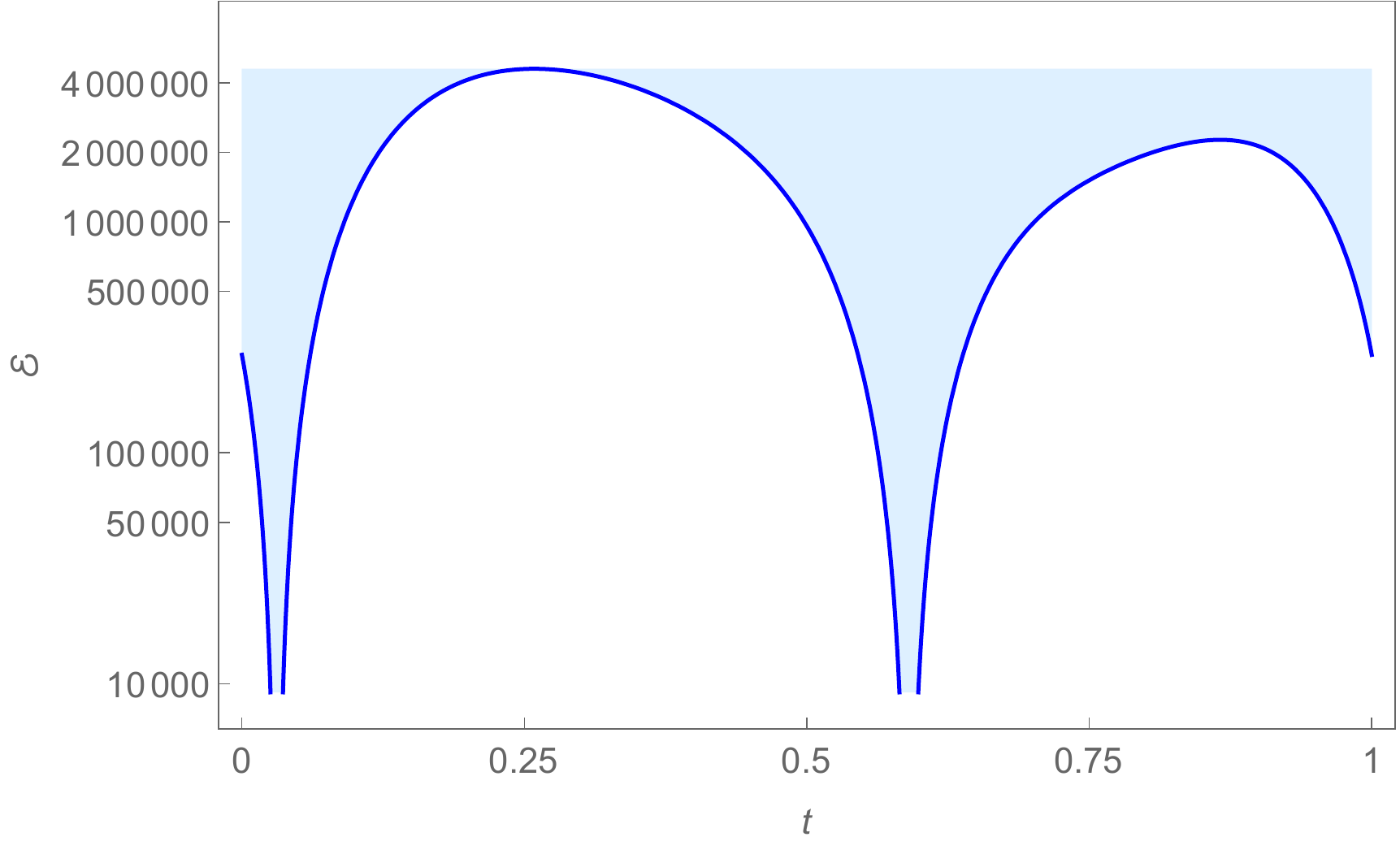}
    \caption{The domain of definition $\bar{\Lambda}$ of the scattering map $S_{\Gamma}$, in physical variables $(\mathcal{E}, t)$. The dark blue curve is the graph of 
$ \left(\frac{|a\frac{db}{dt} - b \frac{da}{dt}|}{| \delta|}  \;\phi(t)\right)^{2}$, with $\phi(t)$ given by (\ref{phi}). The shaded region is the domain $\bar{\Lambda}$. Here $a(t) = 5 + \sin (2\pi t)$, $b(t) = 2 - \cos (2 \pi t)$, $\delta = 0.05$.}
    \label{fig:}
\end{figure}

Our paper is organised as follows. In section 2 we review the known facts about the static elliptic billiard. Section 3 introduces the time-dependent, perturbed billiard map, where we show how the rescaling of billiard speed gives rise to a slow-fast billiard map. In Section 4 we study the inner and scattering maps. We compute the splitting of stable and unstable invariant manifolds of $\Lambda$ and use this result to derive a first order formula for the scattering map, and therefore give proofs of Theorems 1.2 and 1.3. We also derive the Hamiltonians $H_{in}$ and $H_{out}$ that give the first order approximations to inner and scattering maps. We provide the estimates on energy growth via asymptotic study of inner and outer Hamiltonian vector fields and provide a proof of Theorem 1.1 in Section 5.   The Appendices A, B, C, D and E contain the derivation of the Melnikov function giving the first order distance between perturbed invariant manifolds $W^{s,u}(\Lambda)$ and its explicit computation with elliptic functions; they also provide computations for the scattering map.

\section{Static elliptic billiard}
The following facts are well-known, see for example \cite{tabanov1994separatrices, delshams1996poincare, delshams2001homoclinic}. Our exposition follows \cite{tabanov1994separatrices}. Let us consider a billiard inside a static ellipse.  In Cartesian coordinates, we may define the analytic boundary $Q$ of the ellipse by

\begin{equation}
 Q = \lbrace  (x,y) \in \mathbb{R}^{2}: \frac{x^{2}}{a^{2}} + \frac{y^{2}}{b^{2}} = 1\rbrace,
\label{ellipsecartesian}
\end{equation}
\\
where $0<b<a$. Here $a$ is the half-length of the major axis and $b$ is half-length of the minor axis. The foci are at $(\pm c,0)$ where $c = \sqrt{a^{2}-b^{2}}$. Let us parameterise the ellipse as $\gamma(\varphi): [0,2\pi) \rightarrow Q$,
where
\begin{equation}
\gamma (\varphi) = \{(a\cos (\varphi), b\sin (\varphi)): \varphi \in [0,2\pi)\}.
\label{ellipse}
\end{equation}
\\
Let us introduce the angle of reflection $\theta \in (0,\pi)$ of the particle velocity vector made with the positive tangent to $\gamma(\varphi)$ at the collision point.  We  define the static billiard map $B_{s}:(\varphi_{n}, \theta_{n}) \mapsto (\varphi_{n+1}, \theta_{n+1})$ (with subscript $s$ for `static'). Observe that $Q$ is symmetric with regard to the origin. As in the work by Tabanov \cite{tabanov1994separatrices}, we may exploit this symmetry for $B_{s}$, by identifying the points on the ellipse that are $\pi$ across, hence defining $\varphi \Mod{\pi}$. The following formulas for $B_{s}$ are known \cite{tabanov1994separatrices}:
\begin{equation}
\varphi_{n+1} = -\varphi_{n} + 2 \arctan\left(\frac{b(a \tan(\varphi_{n})+ b \tan (\theta_{n}))}{a(b -a \tan (\varphi_{n}) \tan(\theta_{n}))}\right) \Mod{\pi},
\label{stuff}
\end{equation}
\begin{equation}
\theta_{n+1} = -\theta_{n} + \arctan\left(\frac{b}{a\tan(\varphi_{n})}\right) - \arctan\left(\frac{b}{a\tan(\varphi_{n+1})}\right) \Mod{\pi}.
\label{stuff2}
\end{equation}
\\
The map $B_{s}$ is analytic and preserves the symplectic form $|\gamma'(\varphi)| d\varphi \wedge d\theta$, that becomes standard symplectic form $ds \wedge d(\cos (\theta))$ in coordinates $(s, \cos (\theta))$, where $s$ is the arc length associated to $Q$. The map $B_{s}$ has a hyperbolic saddle fixed point $z=(0,\pi/2)$ with eigenvalues $\lbrace \lambda, \frac{1}{\lambda} \rbrace$, where
\begin{equation}
\lambda = \frac{a+c}{a-c} > 1.
\label{evalue}
\end{equation}
The other fixed point $(\pi/2, \pi/2)$ is elliptic.

The elliptic billiard is integrable: the first integral $I$ of $B_{s}$ may be physically interpreted as the conservation of the inner product of angular momenta about the foci. The integral $I$ may be written as
\begin{equation}
I(\varphi, \theta) = b^{2} \cos^{2}(\theta) - c^{2}\sin^{2} (\theta) \sin^{2} (\varphi).
\label{integs}
\end{equation}
\\

\begin{remark} Tabanov \cite{tabanov1994separatrices} gives the integral as $\tilde I = \cosh^{2} \mu \cos^{2} (\theta) + \cos^{2} (\varphi) \sin^{2} (\theta)$ in the elliptical coordinates $x = h \cosh \mu \cos (\varphi), \quad y = h \sinh \mu \sin (\varphi)$, where $h^{2} = a^{2}-b^{2}$. Upon changing from elliptical coordinates to the parameterisation $\gamma (\varphi)$ above and using $a^{2}-b^{2}=c^{2}$, we have
$$\tilde I=\frac{a^{2}}{c^{2}} \cos^{2} (\theta) + \cos^{2}(\varphi) \sin^{2} (\theta),$$
which gives us
$$c^{2}\tilde I(\varphi, \theta) = a^{2} \cos^{2} (\theta) + c^{2} \cos^{2} (\varphi) \sin^{2} (\theta) = b^{2} \cos^{2} (\theta) - c^{2}\sin^{2} (\varphi) \sin^{2} (\theta) +c^{2}.$$
Rearranging, we have
$$I(\varphi,\theta) = c^{2}\tilde I - c^{2} = b^{2} \cos^{2} (\theta) - c^{2}\sin^{2} (\varphi) \sin^{2} (\theta),$$
which gives us (\ref{integs}). \end{remark}

The level set $I=-c^{2}$ corresponds to the elliptic fixed point; for $-c^{2}<I<0$ the billiard trajectories cross the major axis between the foci and have confocal hyperbolas as caustics, and for $0<I<b^{2}$ trajectories cross the major axis outside the foci and have confocal ellipses as caustics. Zero level set, $I(\varphi,\theta)=0$, corresponds to the union of homoclinic orbits that comprise two coincident branches $W_{1,2}=W^{s}_{1,2}(z)=W^u_{1,2}(z)$, the  stable and unstable  manifolds of $z$. From $I(\varphi, \theta)=0$, the union $W(z)= W_{1}(z) \bigcup W_{2}(z)$ is given by the expression \cite{tabanov1994separatrices}:
\begin{equation}
\sin^{2}(\varphi) = \frac{b^{2}}{c^{2}\tan^{2}(\theta)}.
\label{stable}
\end{equation}
\\

Physically, $W_{1}(z)$ corresponds to the billiard trajectory segments repeatedly passing through the focus at $(c,0)$ while $W_{2}(z)$ correspond to trajectories passing the focus at $(-c,0)$. These trajectories asymptotically tend to the major axis of the ellipse, which corresponds to the saddle fixed point $z$ of $B_{s}$  (recall that we take $\varphi$ by modulo $\pi$).

One can obtain explicit expressions for $B_{s}^{n} \vert_{W_{1,2}(z)}$ for $n  \in \mathbb{Z}$. We have
\begin{equation}
B_{s}^{n}(\varphi_{0},\theta_{0})|_{W_{1}(z)} = \left(2 \arctan\left(\lambda^{n} \tan \left( \frac{\varphi_{0}}{2}\right) \right),\arctan \left(\frac{-b}{c\sin (\varphi_{n})}\right) \right),
\label{w1}
\end{equation}
\begin{equation}
B_{s}^{n}(\varphi_{0},\theta_{0})|_{W_{2}(z)} = \left(2 \arctan\left(\lambda^{-n} \tan \left( \frac{\varphi_{0}}{2}\right)\right),\arctan \left(\frac{b}{c\sin (\varphi_{n})}\right) \right).
\label{w2}
\end{equation}

Let us introduce the variable $\xi \in (0, \infty)$ such that $\xi_{n}=\tan\left(\varphi_{n}/  \right)2$, $n \in \mathbb{Z}$, so that (\ref{w1}) gives
\begin{equation}
\xi_{n} = \lambda^{n}\xi_{0}, \qquad \tan (\theta_{n}) = \frac{-b(1+\xi^{2}_{n})}{2c \xi_{n}},
\label{param}
\end{equation}
while (\ref{w2}) gives
\begin{equation}
\xi_{n}=\lambda^{- n}\xi_{0}, \qquad \tan (\theta_{n}) = \frac{b(1+\xi^{2}_{n})}{2c\xi_{n}}.
\label{parameterisation}
\end{equation}

\begin{remark} Upon making the change of coordinates  $\nu = \|\gamma'(\varphi) \| \cos (\theta)$, as in \cite{delshams1996poincare}, the expression (\ref{stable}) becomes $\nu = \pm c \sin (\varphi)$ and the phase portrait of $B_{s}$  resembles the one of the pendulum Hamiltonian $H = p^{2}/2 + \cos (q) - 1$. In spite of the integrability, the existence of the hyperbolic fixed point with a homoclinic orbit implies that global action-angle variables cannot be introduced in an elliptic billiard: it is an example of an \textit{apriori unstable} system. \end{remark}

\section{Time-dependent perturbed elliptic billiard}

\subsection{Billiard map setup}

Let us consider a billiard inside a time-dependent convex curve  $Q(q,t, \delta)$ that is $O(\delta)$ quartic polynomial perturbation of the ellipse for each fixed time $t$:
\begin{equation}
Q(q,t; \delta) := \lbrace q = (x,y) \in \mathbb{R}^{2}, \quad t \in \mathbb{S}^{1}: \frac{x^{2}}{a^{2}(t)} + \frac{y^{2}}{b^{2}(t)} = 1 + \frac{2 \delta y^{4}}{b^{4}(t)} \quad \rbrace,
\label{timebound}
\end{equation}
where $a$ and $b$ are periodic $C^{r+1}$ ($r \geq 4$)  functions  of time $t$, such that $0<b(t)<a(t)$ for all $t$, and $0 <| \delta | \ll 1$. Let us parameterise $Q(q,t; \delta)$ as
\begin{equation}
Q(q,t; \delta) = \{(a(t)\cos (\varphi), b(t)\sin (\varphi) \left(1+ \delta \sin^{2} (\varphi) \right) ) + O(\delta^{2}),  \varphi \in [0, 2 \pi), t \in \mathbb{S}^{1}\}.
\label{ellipse}
\end{equation}
The $O(\delta^{2})$ terms do not play any role in our work.

\begin{remark} Polynomial perturbations of billiards in ellipses were considered previously in a number of works \cite{delshams1996poincare, lomeli1996perturbations, tabanov1994separatrices, levallois1993separation}, however these works considered static perturbations only, not time-dependent ones.
\end{remark}

Assuming the billiard reflection at the moment of collision with the moving boundary is elastic, we define the time-dependent billiard map $B: (\varphi_{n}, \theta_{n}, \mathcal{E}_{n}, t_{n}) \mapsto (\varphi_{n+1}, \theta_{n+1}, \mathcal{E}_{n+1}, t_{n+1})$. Here $\varphi_{n}$ is the collision point on the boundary at the $n$-th collision, $\theta_{n}$ is the reflection angle of the post-collision particle velocity vector made with the positively oriented tangent to the boundary at the collision point, $\mathcal{E}_{n}$ is the particle post-collision energy, and $t_{n}$ is the time of the $n$-th collision. It is known that $B$ is symplectic hence volume-preserving \cite{koiller1995time, gelfreich2012fermi}. Since the boundary curve (\ref{ellipse}) is analytic with respect to $\varphi$ and $C^{r+1}$ in $t$, it is known that $B$ is a $C^{r}$ diffeomorphism \cite{koiller1995time}.

Denote the speed of the particle as $w$, its corresponding velocity as $\mathbf{w}$, its energy as $\mathcal{E}= \frac{w^{2}}{2}$; the speed of the boundary in the direction of outward normal is given by $u(q,t)= -\frac{1 }{\nabla_{q} Q(q,t)} \frac{\partial Q(q,t; \delta)}{\partial t}$, and the unit outward normal is $\mathbf{n} = \frac{\nabla_{q} Q(q,t; \delta)}{\| \nabla_{q} Q(q,t; \delta)\|}$. We assume that positive $u$ corresponds to outward motion of the boundary. The following formula \cite{gelfreich2012fermi} gives the change in velocity at the boundary collision:
\begin{equation}
\mathbf{w}_{n+1} = \mathbf{w}_{n}-2\langle \mathbf{w}_{n},\mathbf{n}_{n+1} \rangle\ \mathbf{n}_{n+1} +2u(\varphi_{n+1}, t_{n+1}) \mathbf{n}_{n+1}.
\label{changemom}
\end{equation}

By analogy with \cite{koiller1995time}, let us introduce the auxilliary variable $\theta^{*}$, which denotes the angle of incidence at the $(n+1)$-th impact with the tangent to the boundary, and let $\alpha$ denote the angle between the tangent to $Q(q,t; \delta)$ and the $x$-axis, given by $\tan (\alpha) = \frac{y'(\varphi)}{x'(\varphi)}$, with $' = \frac{d}{d\varphi}$ (at each fixed $t$) and $x(\varphi)$, $y(\varphi)$ defined by (\ref{ellipse}). Since (\ref{ellipse}) is symmetric with respect to the origin, let us define $\varphi \Mod{\pi}$ as in section 2, thus identifying points on the boundary that are $\pi$ across.  Define $u( \varphi_{n+1}, t_{n+1})$ to be the normal speed of the boundary at $(n+1)$-th impact. In this way,  we obtain an implicit form for the billiard map
$$B: (\varphi_{n}, \theta_{n}, \mathcal{E}_{n}, t_{ n}) \mapsto (\varphi_{n+1}, \theta_{n+1}, \mathcal{E}_{n+1}, t_{n+1}),$$ 
given by the following formulas (more details can found in \cite{koiller1995time}):
\begin{align}
\begin{split}
  a(t_{n+1}) \cos (\varphi_{n+1}) & = a(t_{n}) \cos (\varphi_{n}) + \sqrt{2\mathcal{E}_{n}}(t_{n+1} - t_{n}) \cos( \alpha_{n}+ \theta{_n}),
\\
b(t_{n+1})\sin (\varphi_{n+1}) \left(1+ \delta \sin^{2} (\varphi_{n+1}) \right) & = b(t_{n})\sin (\varphi_{n}) \left(1+ \delta \sin^{2} (\varphi_{n}) \right) + \sqrt{2\mathcal{E}_{n}}(t_{n+1}-t_{n}) \sin (\alpha_{n} + \theta_{n}),
 \\
 \theta_{n} + \alpha_{n} + \theta^{*}_{n+1} - \alpha_{n+1} & = 0,
 \\
 \sqrt{2\mathcal{E}_{n+1}}\cos (\theta_{n+1}) & = \sqrt{2\mathcal{E}_{n}}\cos(\theta^{*}_{n+1}),
 \\
 \sqrt{2\mathcal{E}_{n+1}}\sin(\theta_{n+1}) & = \sqrt{2\mathcal{E}_{n}}\sin(\theta^{*}_{n+1})-2u(\varphi_{n+1}, t_{n+1}).
\end{split}
\label{system}
\end{align}

The first pair of equations of (\ref{system}) implicitly defines $t_{n+1}$ and $\varphi_{n+1}$, while the last three give $\mathcal{E}_{n+1}$ and $\theta_{n+1}$ after expressing $\theta^{*}_{n+1}$ in terms of $\varphi_{n+1}$, $\theta_{n}$ and $\varphi_{n}$. The last pair of equations in (\ref{system}) corresponds to (\ref{changemom}) written in components normal and tangential to the boundary, and they give the expression for the change of energy
\begin{equation}
\mathcal{E}_{n+1} = \mathcal{E}_{n} - 2 \sqrt{2E_{n}}u(\varphi_{n+1}, t_{n+1}) \sin (\theta^{*}_{n+1}) + 2u^{2}(\varphi_{n+1}, t_{n+1}).
\label{speedchange}
\end{equation}

Let us denote by $D$ the Euclidean distance between $\varphi_{n}$ and $\varphi_{n+1}$, then we have
\begin{equation}
t_{n+1} = t_{n}+\frac{D}{\sqrt{2\mathcal{E}_{n}}},
\label{flightime}
\end{equation} 
where the expression 
\begin{equation}
D = \sqrt{\left[a(t_{n+1}) \cos (\varphi_{n+1}) - a(t_{n})\cos (\varphi_{n})\right]^{2} + \left[b(t_{n+1})\sin (\varphi_{n+1}) \left(1+ \delta \sin^{2} (\varphi_{n+1}) \right)) - b(t_{n})\sin (\varphi_{n}) \left(1+ \delta \sin^{2} (\varphi_{n}) \right)\right]^{2}}
\label{euclid}
\end{equation}
is obtained from the first two equations of (\ref{system}). 

We assume that the initial speed of the particle is much larger than the speed of the boundary, so that the shape of the billiard table does not change significantly from one impact to the next. This implies that the time interval between two consecutive collisions is small and the change in the speed of the particle due to a single collision is small compared to the initial speed 
of the particle. Motivated by this, let us write the billiard map in a ``slow-fast" form. Let us take an initial large value of speed $w^{*}$ and introduce a small parameter $\varepsilon = \frac{1}{w^{*}}$ such that $0 < \varepsilon \ll 1$.  Let us introduce the scaled speed $v$ that is related to the original physical variable $w$ through $v = \frac{w}{w{*}}$. In terms of $\varepsilon$ this gives
\begin{equation}
w =  \frac{v}{\varepsilon}.
\label{scale}
\end{equation}

This transformation is equivalent to $\mathcal{E}  = \frac{E}{\varepsilon^{2}}$, where $E$ is the \textit{rescaled} energy. The billiard map in the rescaled energy and time variables becomes close to identity, since  (\ref{speedchange}) transforms to
\begin{equation}
E_{n+1} = E_{n} - 2 \varepsilon \sqrt{2E_{n}}u(\varphi_{n+1}, t_{n+1}) \sin (\theta^{*}_{n+1}) + 2 \varepsilon^{2} u^{2}(\varphi_{n+1}, t_{n+1}),
\label{trans1}
\end{equation}
or, in terms of $v$,
\begin{equation}
v_{n+1} = v_{n} - 2\varepsilon u(\varphi_{n+1}, t_{n+1}) \sin (\theta^{*}_{n+1}) + O(\varepsilon^{2})
\end{equation}
Using (\ref{scale}) transforms the equation (\ref{flightime}) to
\begin{equation}
t_{n+1} = t_{n}+ \frac{\varepsilon D}{\sqrt{2E_{n}}}.
\label{time}
\end{equation}
Note that in the limit $\varepsilon\to 0$ the variables $(t,E)$ become constants, i.e., the billiard map coincides with the frozen billiard map in the domain bounded by (\ref{timebound}).

Now, by virtue of $C^{r}$-smoothness of the boundary and smallness of $\varepsilon$ and $\delta$  we write the time-dependent 
billiard map $B = B_{\varepsilon, \delta} (\varphi_{n}, \theta_{n}, E_{n}, t_{n}) \mapsto (\varphi_{n+1,\varepsilon, \delta}, \theta_{n+1,\varepsilon, \delta}, E_{n+1,\varepsilon, \delta}, t_{n+1,\varepsilon, \delta})$ in the form $B_{\varepsilon, \delta}  = B_{0} + \varepsilon B_{1} + \delta B_{2} + O(\varepsilon^{2} + \delta^{2})$. We define

\begin{align}
\begin{split}
\varphi_{n+1,\varepsilon, \delta}=\varphi_{n+1}+ \varepsilon f_{1}(\varphi_{n},\theta_{n}, E_{n},t_{n}) + \delta g_{1}(\varphi_{n},\theta_{n}, t_{n}) + O(\varepsilon^{2} + \delta^{2}),
\\
\theta_{n+1,\varepsilon, \delta}=\theta_{n+1}+ \varepsilon f_{2}(\varphi_{n},\theta_{n}, E_{n}, t_{n}) + \delta g_{2}(\varphi_{n}, \theta_{n}, t_{n}) + O(\varepsilon^{2} +  \delta^{2}),
\\
E_{n+1,\varepsilon}=E_{n} + \varepsilon f_{3}(\varphi_{n},\theta_{n},E_{n},t_{n})  + \varepsilon O(\varepsilon +  |\delta|),
\\
t_{n+1,\varepsilon}=t_{n} + \varepsilon f_{4}(\varphi_{n},\theta_{n},E_{n},t_{n})  + \varepsilon O(\varepsilon +  |\delta|).
\end{split}
\label{components}
\end{align}

Here $B_{1} = (f_{1},f_{2},f_{3},f_{4})^{\top}$ and $B_{2} = (g_{1},g_{2},0,0)^{\top}$ (with ${}^\top$ denoting the transpose). We use the notation $(\varphi_{n+1}, \theta_{n+1}) = B_{s}(\varphi_{n}, \theta_{n})$, i.e. $B_{0}$ is the same as the static two-dimensional billiard map $B_{s}$ in the ellipse (\ref{ellipsecartesian}), with the increase of the phase space dimension to account for the two "frozen" variables $E$ and $t$; so, $B_{0} ( \varphi, \theta, E, t) = (B_{s}(\varphi, \theta), E,t)$. 

Let us call $B_{0}$ the \textit{unperturbed} time-dependent elliptic billiard map. The map $B_{0}$ is integrable with the first integral (\ref{integs}) where $b = b(t)$ and $c=c(t)$ are fixed, and two more trivial first integrals $I_{2}=E$, $I_{3}=t$. The phase space of $B$ is $[0, \pi) \times (0, \pi) \times \mathbb{R}^{+} \times \mathbb{S}^{1}$, with $\varphi \in [0,\pi)$, $\theta \in (0, \pi)$, $E \in \mathbb{R}^{+}$, and $t \in \mathbb{S}^{1}$. 

\begin{remark}
Since we consider high billiard energies, we assume that the billiard reflection angle $\theta \in (0, \pi)$, i.e. the situations described in \cite{koiller1995time} where the billiard trajectory continues in a tangential or `outward' direction to the boundary at the moment of collision do not occur.
\end{remark} 

By substituting the expression for $t_{n+1, \varepsilon, \delta}$ and $\varphi_{n+1, \varepsilon, \delta}$ from (\ref{components}) into (\ref{euclid}) and expanding in Taylor series, we find the zero-th order in $\varepsilon$ and $\delta$ free-flight distance
\begin{equation}
D_{0} = \sqrt{a^{2}[\cos(\varphi_{n}) - \cos(\varphi_{n+1})]^{2} + b^{2}[\sin(\varphi_{n}) - \sin(\varphi_{n+1})]^2}.
\label{euclidzero}
\end{equation}

Upon substituting expansion (\ref{components}) into (\ref{system}), and examining the coefficients of the order $\varepsilon$ terms, we find that $B_{1} = (f_{1},f_{2},f_{3},f_{4})^{\top}$ is given by the following expressions:
\begin{equation}
f_{1}  = \frac{D_{0}}{\sqrt{2E_{n}}}\left(\frac{\dot{a}\cos(\varphi_{n+1}) \tan(\theta_{n}+\alpha_{n}) - \dot{b}\sin(\varphi_{n+1})}{a\sin(\varphi_{n+1}) \tan(\theta_{n}+\alpha_{n}) + b\cos(\varphi_{n+1})}\right),
\label{f1}
\end{equation}
\begin{equation}
f_{2} = \frac{-2u \cos (\theta_{n+1})}{\sqrt{2E_{n}}} + \frac{a^{2} \sin^{2} (\varphi_{n+1})}{a^{2} \sin^{2} (\varphi_{n+1}) + b^{2} \cos^{2} (\varphi_{n+1}) } \left(\frac{D_{0}(\dot{a}a^{-1}b - \dot{b})}{a \sqrt{2E_{n}} \tan (\varphi_{n+1})} + \frac{b f_{1}}{a \sin^{2} (\varphi_{n+1})}\right),
\label{f2}
\end{equation}
\begin{equation}
f_{3}= -2\sqrt{2E_{n}} u \sin (\theta_{n+1}),
\label{f3}
\end{equation}
\begin{equation}
f_{4} = \frac{D_{0}}{\sqrt{2E_{n}}}.
\label{f4}
\end{equation}

The dot above  $a$ and $b$ denotes the derivative with respect to time evaluated at time $t_{n}$. We also denote $a = a(t_{n}), b=b(t_{n})$, and $u =  \frac{\dot{a}b\cos^{2}(\varphi_{n+1}) + a\dot{b}\sin^{2}(\varphi_{n+1})}{\sqrt{ a^{2}\sin^{2}(\varphi_{n+1}) + b^{2}\cos^{2}(\varphi_{n+1})}}$ is the normal speed of the boundary. Similarly, comparing the coefficients of the first order in $\delta$, we find that the expressions for $g_{i}$ for $i=1,..,4$ are independent of $E$ 
and $\dot{a}$, $\dot{b}$. 

\begin{remark} Observe that $f_{1}, f_{2}, f_{3}, f_{4}$ are written in a certain ``cross-form" as functions of the image of $(\varphi, \theta, E, t)$ under $B_{0}$ as well as the initial point $(\varphi, \theta, E, t)$ itself; however since $B$ is a $C^{r}$ diffeomorphism, one may express the functions $f_{i}$ in form (\ref{components}). \end{remark}

\begin{remark} Observe that the Taylor series expansion of the map $B_{\varepsilon, \delta}$ consists of two perturbations that may be considered independently in the first order of the perturbation parameters: the $O(\varepsilon)$ perturbation terms that arise due to rescaling of energy (this is $B_{1}$), and the $O(\delta)$ perturbation terms that correspond to the polynomial perturbation of the boundary (this is $B_{2}$, which is independent of $t$).  \end{remark}

\begin{remark} For $\varepsilon = 0$, the variables $E$ and $t$ are constant and thus $B_{0,\delta}$ becomes a  billiard map corresponding to an ellipse with a quartic polynomial perturbation, with the semi-axes lengths fixed at $a(t_{n})$, $b(t_{n})$. This is a twist map and thus has a certain generating function $L(\varphi_{n}, \varphi_{n+1})$ \cite{delshams1996poincare} . The nonintegrability of such convex billiard curves and the relation between generating function, Melnikov function and Melnikov potential was investigated in detail in \cite{delshams1996poincare}. Thus, for the computation of the Melnikov function we do not require the knowledge of the explicit form of $B_{2}$, as we will be using the generating function formulation and Melnikov potential \cite{delshams1996poincare} for the polynomial part of the perturbation. \end{remark}

\subsection{Phase space geometry of the time-dependent billiard}

If a point in our billiard moves along the major semi-axis, it will continue to move along this semi-axis forever. In other
words, this motion is confined to an invariant manifold $\Lambda$ in the phase space of the billiard map $B$. It is given by
$(\varphi \Mod{\pi},\theta)=(0,\pi/2)$ and is parameterized by the energy $\mathcal{E}\in \mathbb{R}^{+}$ and the time 
$t\in\mathbb{S}^1$, so $\Lambda$ is a cylinder. If we rescale the energy and take the limit $\varepsilon=0$, the cylinder
$\Lambda$ corresponds to the hyperbolic saddle fixed point $z=(0,\pi/2)$ of the static billiard map $B_{s}$, so $\Lambda$
is a normally-hyperbolic invariant manifold of $B_{0,\delta}$.

It has two branches of stable and unstable three-dimensional invariant manifolds $W^{s,u}_{i}(\Lambda)$ with $i = 1,2$.
At $\delta=0$, they coincide and form two symmetric three-dimensional homoclinic manifolds $W_{i}$ that are inherited from one-dimensional separatrices of $z$ (see (\ref{stable})). Denoting $W = \cup_{i=1}^{2} W_{i}$, we have
\begin{equation}
W = \{(\varphi, \theta, E,t):\varphi \in [0,\pi), \theta \in (0,\pi), E \in \mathbb{R}^{+}, t \in \mathbb{S}^{1}; \quad \sin^{2}(\varphi) = \frac{b^{2}(t)}{c^{2}(t)\tan^{2}(\theta)} \ \}.
\label{invmanifoldsfull2}
\end{equation}

By the theory of Fenichel \cite{fenichel1971persistence}, the normal hyperbolicity of $\Lambda$ persists as the map $B_{0,\delta}$ is perturbed. In particular, the stable and unstable manifolds of any subset of $\Lambda$ that corresponds to a bounded set of values of the rescaled energy $E$ persist at all small $\varepsilon$, and depend continuously on $\varepsilon$ (and $\delta$). If we return to the non-rescaled energy variable $\mathcal{E}$, this gives us, for all small $\delta$, the normal hyperbolicity, for the time-dependent billiard, of the piece of $\Lambda$ that corresponds to sufficiently large values of $\mathcal{E}$; with the stable and unstable manifolds $W^{s,u}(\Lambda)$ close to those for the frozen billiard map and
depending continuously on $\delta$.

At $\varepsilon=0$, the stable and unstable manifolds $W^{s,u}_{i}$ are foliated by strong stable and strong unstable one-dimensional fibers $W^{ss, uu}_{(E,t);i}: (E,t)=const$. They form smooth invariant foliations transverse to $\Lambda$; such foliations are unique and persist at small smooth perturbations of the system \cite{fenichel1971persistence}. Thus, these invariant foliations persist for the time-dependent billiard as well, and depend continuously on $\delta$ and, when the rescaling of the energy variable is introduced, on $\varepsilon$.

The closeness of $B$ to identity (in the rescaled energy $E$ and time $t$) implies a large spectral gap \cite{gelfreich2014arnold} for the normally-hyperbolic cylinder $\Lambda$. Thus, the stable and unstable manifolds $W^{s,u}(\Lambda)$ are $C^{r}$ and their leaves $W^{ss,uu}$ are $C^{r-1}$ in $(\varphi, \theta,E,t)$ and also in the parameters $(\varepsilon, \delta)$, for $r \geq 4$ \cite{fenichel1971persistence}.

Note also, that for the unperturbed system (i.e., at $\varepsilon=0$ and $\delta=0$) the fibers $W^{ss}_{(E,t);i}$ and $W^{uu}_{(E,t);i}$ coincide for each $(E,t)$. 

\section{Inner and outer dynamics}
In this section we will define and study the inner and scattering (outer)  maps associated to $\Lambda$. Iterations of these maps will be the main tool for constructing a billiard orbit with growing energy in section 5.
\subsection{Inner map}

The inner map is the restriction of $B$ to $\Lambda$. Physically, the inner map describes the billiard motion along the major axis of the billiard domain. Let us denote the inner map by $\Phi$ and hence $\Phi = B|_\Lambda$ writes as $B(0, \frac{\pi}{2}, E_{n}, t_{n}) = (0, \frac{\pi}{2}, E_{n+1}, t_{n+1})$ (recall that $E$ is the rescaled energy). Therefore, using (\ref{trans1}) and (\ref{time}), the inner map can be given in an implicit form
\begin{align}
\begin{split}
E_{n+1} =E_{n} - 2 \varepsilon \sqrt{2E_{n}} \dot{a}(t_{n+1}) + 2 \varepsilon^{2} \dot{a}^{2}(t_{n+1}),
\\
t_{n+1} = t_{n} + \frac{\varepsilon( a(t_{n}) + a(t_{n+1}))}{\sqrt{2E_{n}}},
\end{split}
\label{innerscaled}
\end{align}

The map $\Phi$ which defines a $C^{r}$ diffeomorphism $(E_{n}, t_{n}) \mapsto (E_{n+1}, t_{n+1})$ for small $\varepsilon$, preserves the symplectic form $\left(1+ \varepsilon \frac{\dot{a}(t)}{\sqrt{2E}}\right) dE \wedge dt$ that becomes standard form $dF \wedge dt$ upon defining $F = \sqrt{E}\left(\sqrt{E} + \varepsilon \frac{2\dot{a}(t)}{\sqrt{2}}\right)$. The symplecticity of the inner map and its closeness to identity imply that it may be approximated by the time-$\varepsilon$ shift along a level curve of an autonomous Hamiltonian $H_{in}(t, E ; \varepsilon)= H_{in}(t,E) + O(\varepsilon)$ defined on $\Lambda$. Let us find $H_{in}$.  A series expansion in $\varepsilon$  yields the following approximation of (\ref{innerscaled}):
\begin{equation}
E_{n+1}=E_{n} - 2\varepsilon \dot{a}(t_{n}) \sqrt{2E_{n}} +O(\varepsilon^{2}) , \qquad t_{n+1} = t_{n} + \frac{\sqrt{2}\varepsilon a(t_{n})}{\sqrt{E_{n}}} + O(\varepsilon^{2}).
\label{innerembed}
\end{equation}
Since (\ref{innerembed}) gives
$$\frac{E_{n+1} - E_{n}}{t_{n+1}-t_{n}} = -\frac{2\dot{a}(t_{n})E_{n}}{a(t_{n})} + O(\varepsilon),$$
we see that $\Phi$ is approximated to $O(\varepsilon^{2})$ by a time-$\epsilon$ shift along a trajectory of the solution of the differential equation 
$$\frac{dE}{dt} = -\frac{2\dot{a}E}{a}.$$

Its integral $\sqrt{E}a(t)=\mbox{const}$ gives the zero-th order approximation of $H_{in}$. Thus,  map (\ref{innerscaled}) up to $O(\varepsilon^{2})$ is given by the time-$\epsilon$ shift along a level curve of the Hamiltonian
\begin{equation}
H_{in}(t,E)= 2\sqrt{2E}a(t).
\label{innh}
\end{equation}
The corresponding Hamiltonian vector field is:
\begin{equation}
\frac{dt}{ds}= \frac{\partial H_{in}}{\partial E} = \frac{\sqrt{2}a(t)}{\sqrt{E}}, \qquad \frac{dE}{ds} = - \frac{\partial H_{in}}{\partial t} = -2\sqrt{2E}\dot{a}(t),
\label{hamiltonseq}
\end{equation}
where $s$ is an auxiliary time variable. 

Let $(\bar{E}, \bar{t})$ be the image of $(E, t )$ under $\Phi^{p}$ where $p = [\frac{1}{\varepsilon}]$. Let us show the following
\begin{proposition}
The inner map $\Phi^{p}: (E, t ) \mapsto (\bar{E}, \bar{t}) $ satisfies the twist condition, i.e., $\frac{\partial \bar{t}}{\partial E} \neq 0$.
\end{proposition}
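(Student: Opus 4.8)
The plan is to use the Hamiltonian interpolation of $\Phi$ set up above and to compute the twist of the corresponding time-$(\approx 1)$ flow explicitly. By the interpolation theorem quoted in Section~4.1, the exact symplectic, near-identity map $\Phi$ agrees, up to an error of order $O(\varepsilon^{r})$ in the $C^{1}$ topology, with the time-$\varepsilon$ map of the flow of an autonomous Hamiltonian $H_{in}(t,E;\varepsilon)=H_{in}(t,E)+O(\varepsilon)$, where $H_{in}=2\sqrt{2E}\,a(t)$ is given by (\ref{innh}). Composing $p=[\frac1\varepsilon]$ copies and using Gronwall's inequality to bound the accumulation of errors along an orbit of length $p\varepsilon=1+O(\varepsilon)$, one gets that $\Phi^{p}$ is $C^{1}$-close, up to $O(\varepsilon^{r-1})$, to the time-$\sigma$ flow map $\Psi^{\sigma}$ of $H_{in}(\cdot\,;\varepsilon)$ with $\sigma:=p\varepsilon=1+O(\varepsilon)$. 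It therefore suffices to show that the time-$\sigma$ flow of $H_{in}$ has $\partial_E\bar t$ bounded away from zero and that the $O(\varepsilon)$ and $O(\varepsilon^{r-1})$ corrections ($r\ge4$) do not destroy this.

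First I would write $\Psi^{\sigma}$ in closed form. On a level curve of $H_{in}$ one has $2\sqrt{2E}\,a(t)=h=\mathrm{const}$, hence $E=h^{2}/(8a(t)^{2})$, and (\ref{hamiltonseq}) gives $\frac{dt}{ds}=4a(t)^{2}/h$. Introducing the antiderivative $G(t)=\int_{0}^{t}a(\tau)^{-2}\,d\tau$, a strictly increasing $C^{r+1}$ function on the universal cover with $G(t+1)=G(t)+G(1)$, integration of the vector field along the trajectory issued from $(E,t)$ yields
\begin{equation*}
G(\bar t)=G(t)+\frac{\sqrt2\,\sigma}{\sqrt E\,a(t)},\qquad \bar E=E\,\frac{a(t)^{2}}{a(\bar t)^{2}},
\end{equation*}
so that $\bar t=G^{-1}\!\bigl(G(t)+\tfrac{\sqrt2\,\sigma}{\sqrt E\,a(t)}\bigr)$ for the truncated Hamiltonian; replacing $H_{in}$ by $H_{in}(\cdot\,;\varepsilon)$ perturbs these formulas by $O(\varepsilon)$ in $C^{1}$. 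Differentiating with respect to $E$ and using $G'(\bar t)=a(\bar t)^{-2}$,
\begin{equation*}
\frac{\partial\bar t}{\partial E}=a(\bar t)^{2}\,\frac{\partial}{\partial E}\!\left(\frac{\sqrt2\,\sigma}{\sqrt E\,a(t)}\right)+O(\varepsilon)=-\,\frac{\sigma\,a(\bar t)^{2}}{\sqrt2\,E^{3/2}\,a(t)}+O(\varepsilon).
\end{equation*}
Since $a$ is continuous and periodic with $0<b(t)<a(t)$, it is bounded above and below by positive constants, so on any compact range of energies the leading term is bounded away from $0$; with $\sigma=1+O(\varepsilon)$ and the interpolation error $O(\varepsilon^{r-1})$, $r\ge4$, it follows that $\partial\bar t/\partial E\neq0$ throughout, which is the asserted twist condition.

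The only delicate ingredient is the $C^{1}$ interpolation estimate for the $p$-fold iterate — the claim that $\Phi^{p}$ and the true flow map of $H_{in}(\cdot\,;\varepsilon)$ stay $O(\varepsilon^{r-1})$-close, together with their first derivatives, over an orbit segment of $O(1)$ length; this is where the smoothness hypothesis $r\ge4$ enters. An alternative, more hands-on route that avoids the interpolation theorem is to differentiate the recursion (\ref{innerembed}) directly: the variational equation for $\partial_E(E_n,t_n)$ with initial data $(1,0)$ gives $\partial_E E_n=1+O(\varepsilon n)$ and $\partial_E t_n=-\frac{\varepsilon}{\sqrt2}\sum_{k=0}^{n-1}\frac{a(t_k)}{E_k^{3/2}}\,\partial_E E_k+O(\varepsilon)$, with all these derivatives $O(1)$ for $k\le p$ by a Gronwall bound; the essential point is the absence of cancellation in the sum — to leading order the summands all have the same sign because $a>0$ — so for $n=p\sim1/\varepsilon$ it is a Riemann-type sum of order $1$ bounded away from $0$, giving $\partial_E\bar t\neq0$. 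I expect the first, Hamiltonian, formulation to be the cleaner one to carry out in full.
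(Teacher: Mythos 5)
Your argument is correct and follows essentially the same route as the paper: approximate $\Phi^{p}$ by the time-$1$ flow of $H_{in}$, derive the relation $\sqrt{E/2}\,a(t)\int_{t}^{\bar t}a(\tau)^{-2}d\tau=1$ (your $G(\bar t)-G(t)$ identity), differentiate in $E$ to get a negative, $O(1)$ twist, and note that the $O(\varepsilon)$ perturbation cannot destroy it. Your explicit attention to the $C^{1}$ (rather than $C^{0}$) closeness of $\Phi^{p}$ to the flow map is a point the paper glosses over, but it is a refinement of, not a departure from, the same proof.
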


\begin{proof}
Let us denote by $\phi$ the time-$1$ map of $H_{in}(t,E)$. Observe that $\phi$ is $O(\varepsilon)$ close to $\Phi^{p}$ in $(E,t)$ coordinates. Let us verify that $\phi$ has the twist property.  Since $H_{in}$ is integrable, $\phi$ is also integrable, i.e., it preserves $H_{in}$.  From the first equation of (\ref{hamiltonseq}), we have $\frac{ds}{dt} = \frac{1}{a(t)} \sqrt{\frac{E}{2}}$. Expressing $E$ in terms of $H_{in}$ from (\ref{innh}) on this curve yields $\frac{ds}{dt} = \frac{H_{in}}{4a^{2}(t)}$. Then we have 
\begin{equation}
1 = H_{in} \int_{t}^{\bar{t}} \frac{dt}{4a^{2}(t)} = \sqrt{\frac{E}{2}} a(t) \int_{t}^{\bar{t}} \frac{dt}{a^{2}(t)}
\label{period}
\end{equation}
for the map $\phi$. Differentiating both sides of (\ref{period}) with respect to $E$ yields $\frac{\partial \bar{t}}{\partial E} < 0$. Thus, by definition $\phi$ is a twist map. Since $\Phi^{p}$ is an $O(\varepsilon)$ perturbation of $\phi$, it also has the twist property. 
\end{proof}

Since all orbits of $H_{in}$ are invariant closed curves forming a continuous foliation of $\Lambda$, and $\phi$ is integrable twist map, it follows from the KAM theory that $\Phi^{p}: (E, t ) \mapsto (\bar{E}, \bar{t})$ has closely spaced invariant curves on $\Lambda$. Hence, the energy of the billiard motion with the initial conditions on the major axis is always bounded.

\begin{remark} It follows from standard results \cite{benettin1994hamiltonian} that the inner map (\ref{innerscaled}) coincides up to $O(\varepsilon^{r+1})$ with $\varepsilon$-time shift along a level curve of some Hamiltonian $H_{in}(t,E;\varepsilon)$, which is given by (\ref{innh}) at $\varepsilon = 0$, a fact that we will make use of in section 5.
\end{remark}

\subsection{Scattering map: theory}

The so-called outer dynamics on $\Lambda$ is defined by the scattering map (also called the outer map), studied in detail in \cite{delshams2008geometric}. It is obtained by an asymptotic process: to construct it, one starts infinitesimally close to the normally hyperbolic invariant manifold, moves along its unstable manifold up to a homoclinic intersection and, then, back to $\Lambda$ along its stable manifold.

Let us define the scattering map in a general setup.  Let $T: M \mapsto M$ be a $C^{r}$ (here we take $\quad r \geq 1$)  diffeomorphism on a compact manifold $M$, and let $\Lambda \subset M$ be a compact normally-hyperbolic invariant manifold of $T$. By the normal hyperbolicity, there exits stable $W^{s}(\Lambda)$ and unstable $W^{u}(\Lambda)$ manifolds of $\Lambda$ with strong stable and strong unstable foliations $W^{ss,uu}(x)$ for each $x \in \Lambda$.  Let us assume that  stable and unstable manifolds of $\Lambda$ intersect transversally along a homoclinic manifold  $\Gamma \subset W^{s}(\Lambda) \cap W^{u}(\Lambda)$: for all $z \in \Gamma$ we have
\begin{equation}
T_{z}W^{s}_{\Lambda} + T_{z}W^{u}_{\Lambda} = T_{z}M, \qquad T_{z}W^{s}_{\Lambda} \cap T_{z}W^{u}_{\Lambda} = T_{z} \Gamma.
\label{trans}
\end{equation}

For a given $z \in \Gamma$, there are unique points $x_{\pm} \in \Lambda$ satisfying $z \in W^{ss}_{x_{+}}$ and $z \in W^{uu}_{x_{-}}$.  Following \cite{gelfreich2014arnold} we call the homoclinic intersection at the point $z$ \textit{strongly transverse} if the leaf $W^{ss}_{x_{+}}$ is transverse to $W^{s}(\Lambda)$ and the leaf $W^{uu}_{x_{-}}$ is transverse to $W^{u}(\Lambda)$ at $z$:
\begin{equation}
T_{z}W^{ss}_{x_{+}} \bigoplus T_{z}\Gamma = T_{z}W^{s}_{\Lambda}, \qquad T_{z}W^{uu}_{x_{-}} \bigoplus T_{z}\Gamma = T_{z}W^{u}_{\Lambda}.
\label{transf}
\end{equation}
  
Conditions (\ref{transf}) are used to locally define the {\em scattering map} $S_{\Gamma}$. We say that $\Gamma$ is a homoclinic channel if it satisfies (\ref{trans}) and (\ref{transf}). Let $\pi^{s}: \Gamma \mapsto \Lambda$ and $\pi^{u}: \Gamma  \mapsto \Lambda$ be the projections by the strong stable leaves of stable manifolds, and strong unstable leaves of unstable manifolds, respectively. If we have a sufficiently large spectral gap (the expansion in the strong unstable directions is much larger
than the possible expansion we have in the directions tangent to $\Lambda$), then the strong-stable and strong-unstable
foliations are $C^{r-1}$-smooth \cite{fenichel1971persistence}. This condition is obviously satisfied in our case, since the
restriction of our billiard map to $\Lambda$ is close to identity, i.e. the expansion in the directions tangent to $\Lambda$
can be made as weak as we want. Thus, when conditions (\ref{transf}) are fulfilled, the projections $\pi^{s,u}$ are local $C^{r-1}$ diffeomorphisms. Then, the scattering map $S_{\Gamma}: \Lambda \mapsto \Lambda$ defined as
$$S_{\Gamma} = \pi^{s} \circ (\pi^{u})^{-1}$$
is a $C^{r-1}$ diffeomorphism (which is symplectic if $T$ is symplectic): \cite{delshams2008geometric, gelfreich2014arnold}. 

The invariance property of the strong stable and strong unstable foliations (i.e. the property $T\left(W^{ss,uu}_{x}\right) = W^{ss,uu}_{T(x)}$ for all $x \in \Lambda$) implies that
$$\pi^{s,u} = T^{-1} \circ \pi^{s,u} \circ T, \qquad \pi^{s,u} = T^{-n} \circ \pi^{s,u} \circ T^{n}, \quad n \geq 1. $$
Hence, the dependence of the scattering map on the choice of a particular homoclinic channel obeys the rule
$$S_{\Gamma} = T^{-1} \circ S_{T\left( \Gamma \right)} \circ T.$$

\subsection{Scattering map for $B$ and splitting of invariant manifolds}

Let us take a sufficiently large compact subset $\tilde{\Lambda} \subset \Lambda$ of the normally hyperbolic invariant cylinder $\Lambda$:
\begin{equation}
\tilde{\Lambda} = \{(\varphi, \theta, E, t): (\varphi, \theta) = (0, \pi/2), E \in [E_{1}, E_{2}], t \in \mathbb{S}^{1} \}.
\label{subsetnhim}
\end{equation}
In this section we will find a subset $\bar{\Lambda} \subset \tilde{\Lambda}$ where the  scattering map $S_{\Gamma}$ is well-defined, i.e. we will find a set $\Gamma$ consisting of homoclinic points for which the transversality conditions (\ref{trans}), (\ref{transf}) are satisfied - then the projection $\pi^u(\Gamma)$ is the set $\bar\Lambda$. We will derive perturbatively an explicit formula for $S_{\Gamma}$ up to $\varepsilon O(\varepsilon + |\delta|)$ and then provide an approximation to it by a time-$\varepsilon$ shift along a level curve of a certain Hamiltonian $H_{out}$. In the process, we will also prove Theorems 1.2 and 1.3. 

To study the properties of the scattering map, we need to determine whether the perturbed stable and unstable manifolds $W^{s,u}_{i}(\tilde{\Lambda})$ of $\tilde{\Lambda}$ intersect transversally, and furthermore, if these intersections are strongly transverse.  We will use Melnikov-type method to determine the existence of transverse intersections of $W^{s,u}_{i}(\tilde{\Lambda})$. Since unperturbed $W^{s,u}_{1}(\tilde{\Lambda})$ and $W^{s,u}_{2}(\tilde{\Lambda})$ are symmetric to each other, we will study $W^{s,u}_{2}(\tilde{\Lambda})$ only. From (\ref{invmanifoldsfull2}), the unperturbed invariant manifolds $W^{s,u}_{2}(\tilde{\Lambda})$ coincide and form a $3$-dimensional (unperturbed) homoclinic manifold $W_{2}$ given by
\begin{equation}
W_{2}(\tilde{\Lambda}) = \{(\varphi, \theta, E,t):\varphi \in [0,\pi) \ \frac{\pi}{2}, \theta \in (0,\pi), E \in [E_{1}, E_{2}], t \in \mathbb{S}^1; \quad \sin(\varphi) = \frac{b(t)}{c(t)\tan(\theta)} \ \}.
\label{w22}
\end{equation}
We note that $W^{s,u}_{2}(\tilde{\Lambda})$ are co-dimension $1$ manifolds. Therefore, in order to determine whether they split or not for nonzero $\varepsilon, \delta \neq 0$. we only need one measurement in the normal direction to the coincident tangent spaces of unperturbed $W^{s,u}_{2}(\tilde{\Lambda})$.  Later in this section we will introduce the distance function $\bar{d}(x_{0}, \varepsilon, \delta)$ that measures the splitting of $W^{s,u}_{2}(\tilde{\Lambda})$ for $\varepsilon, \delta \neq 0$ in the normal direction to $x_{0}  = (\varphi_{0}, \theta_{0}, E_{0}, t_{0}) \in W_{2}(\tilde{\Lambda})$ and will show that if $\frac{\partial \bar{d}}{\partial \varphi_0} \neq 0$, then the strong transversality condition (\ref{transf}) is satisfied.
  
Let us consider strong stable and unstable foliations $W^{ss,uu}_{(\bar{E}, \bar{t}) \in \tilde{\Lambda};2}$ of the 
stable/unstable manifolds $W^{s,u}_{2}(\tilde{\Lambda})$ (with notation as in section 3.2). Since we study $W^{s,u}_{2}(\tilde{\Lambda})$, we will drop the subscript $2$ and write $W^{ss,uu}_{(\bar{E}, \bar{t}) \in \tilde{\Lambda}}$ from now on. 

Since $B_{0}$ is integrable and is identity in $(E,t)$ variables, the unperturbed one-dimensional strong stable and unstable fibers $W^{ss,uu}_{(\bar{E}, \bar{t}) \in \tilde{\Lambda}}$  with the same basepoint $(\bar{E}, \bar{t})$ coincide, and may be trivially expressed as graphs over $\varphi$ variable with $E = \bar{E} = \mbox{const}$ and $t = \bar{t} = \mbox{const}$, with $\theta$ as a function of $\varphi$ determined from (\ref{w22}). Because the
spectral gap is large in our situation, the strong stable and unstable fibers are $C^{r-1}$-smooth functions of parameters \cite{fenichel1971persistence}. Therefore, for small $\varepsilon$ and $\delta$, the fibers  $W^{ss,uu}_{(\bar{E}, \bar{t}) \in \tilde{\Lambda}}$ and perturbed stable and unstable manifolds $W^{s,u}(\tilde{\Lambda})$ may be written as 
 \begin{equation}
E = E^{ss,uu}(\varphi; \bar{E}, \bar{t}, \varepsilon, \delta) = \bar{E} + \varepsilon \phi_{1}^{ss,uu}(\varphi; \bar{E}, \bar{t}, 0 ,0) + \varepsilon O(\varepsilon + |\delta|),
\label{fiberE}
\end{equation}
\begin{equation}
t = t^{ss,uu}(\varphi; \bar{E}, \bar{t}, \varepsilon, \delta)  = \bar{t} + \varepsilon \phi_{2}^{ss,uu}(\varphi; \bar{E}, \bar{t}, 0, 0) + \varepsilon O(\varepsilon + |\delta|),
\label{fibert}
\end{equation}
\begin{equation}\label{fiberthet}
\theta = \theta^{s,u}(\varphi, E, t, \varepsilon, \delta),
\end{equation}
where $\phi_{1,2}^{ss,uu}$ are certain $C^{r-1}$ functions, and the graphs of the $C^{r}$-functions $\theta^{s,u}$
define sufficiently large pieces of the stable and unstable manifolds $W^{s,u}_{2}(\tilde{\Lambda})$ at small $\varepsilon$ and $\delta$.

Observe that in the formulas (\ref{fiberE}), (\ref{fibert}) the $O(\delta)$ terms do not appear, since the billiard map is
identity in $(E,t)$ at $\varepsilon=0$, so the strong-stable and strong-unstable fibers are given by $E = \mbox{const}$ and $t = \mbox{const}$ at $\varepsilon=0$ even if $\delta\neq 0$. We note that if we know $\varphi$ (and thus $\theta$) as a function of $(E,t)$, then the formulas (\ref{fiberE}), (\ref{fibert}) will provide us with a formula for the scattering map $S_{\Gamma}$.\\ 

In what follows, we start with the analysis of the splitting of $W^{s}_{2}(\tilde{\Lambda})$ and $W^{u}_{2}(\tilde{\Lambda})$ for the case $\delta = 0$, $\varepsilon \neq 0$.

\begin{proof}[Proof of Theorem 1.2.]
Let us set $\delta = 0$ in (\ref{timebound}). It can be shown \cite{baldoma1998poincare} that the distance $d(x_{0}, \varepsilon, 0)$ (measured in the normal direction at the point $x_{0} = (\varphi_{0}, \theta_{0}, E_{0}, t_{0}) \in W_{2}(\tilde{\Lambda})$) between the perturbed invariant manifolds  $W^{s,u}_{2}(\tilde{\Lambda})$ is given by 
$$d(x_{0}, \varepsilon, 0) = \varepsilon M_{1}(x_{0}) + O\left(\varepsilon^{2} \right),$$ 
where $M_{1}$ is the Melnikov function corresponding to the time-dependent ellipse only (without the quartic polynomial part). The following formula for $M_{1}$ is derived in Appendix A:

\begin{equation}
M_{1}(x_{0}) := \sum_{n = -\infty}^{\infty}  { \langle \nabla I(B_{0}(x_{n}), B_{1}(x_{n}) \rangle  },
\label{m1}
\end{equation} 
where $x_{n}=(\varphi_{n},\theta_{n},E_{n},t_{n}) \in W_{2}(\tilde{\Lambda})$ is the orbit of the point $x_{0} \in W_{2}(\tilde{\Lambda})$ under the map $B_{0}$ (i.e., $x_{n}= B^{n}_{0}(x_{0})$),  and $I$ is the first integral given by (\ref{integs}). Since the separatrices of the static billiard are one-dimensional, we have $\theta_n = \theta^0(\varphi_n)$, a smooth function of $\varphi$ given by (\ref{w22}). Also observe that $E_{n}=E_{0}$, $t_{n}=t_{0}$. Hence we evaluate the series (\ref{m1}) with slow variables held constant and equal to their initial value.

Let us set $\bar{d}(x_{0}, \varepsilon, 0) = \frac{d(x_{0}, \varepsilon, 0)}{\varepsilon}$ for $\varepsilon \neq 0$, and $\bar{d}(x_{0}, 0, 0) = M_{1}(x_{0})$. Then $\bar{d}(x_{0}, \varepsilon, 0) = M_{1}(x_{0}) + O(\varepsilon)$ and the zeroes of $\bar{d}(x, \varepsilon, 0)$ correspond to intersection of $W^{s}_{2}(\tilde{\Lambda})$ and $W^{u}_{2}(\tilde{\Lambda})$. The implicit function theorem implies that if $M_{1}(x_{0})=0$ and $DM_{1}(x_{0}) \neq 0$, then  $W^{s,u}_{2}(\tilde{\Lambda})$ intersect transversally at $x_{0}$ along a $2$-dimensional homoclinic manifold.

We are able to compute $M_{1}$  analytically (see Appendix B). Note that since  we express $\varphi$ and $\theta$ through $\xi$ on homoclinic manifolds using parametrisation (\ref{parameterisation}), effectively we have $M_{1}(x) = M_{1}(\varphi, \theta(\varphi),E,t) = M_{1}(\xi,E,t)$. 

We define $h = \log \lambda$ where $\lambda$ from (\ref{evalue}) is the largest eigenvalue of the linearization of the static map $B_{s}$ at the saddle point $z$. For the map $B$, the value of $\lambda$ depends on $t$ and is given by $\lambda = \lambda(t) = \frac{a+c}{a-c}$ with $a=a(t)$, $b=b(t)$ and $c = c(t)$. We also define the variable $\tau$ in terms of $\xi$ from (\ref{parameterisation}): $\exp(\tau):= \xi := \tan (\varphi/2) $. The method developed in \cite{delshams1996poincare} enables us to compute the sum of the series (\ref{m1}) in terms of elliptic functions - the computations are in Appendix B. Using the definitions of the elliptic functions $\dn$, $\cn$, $\sn$ and complete elliptic integrals $E, E', K, K'$ depending on parameter $m \in [0,1]$ whose dependence on $h$ is $\frac{K'}{K} = \frac{\pi}{h}$ as in Appendix B, we find:
\begin{equation}
M_{1} =  \frac{4b}{v}\left(-\dot{a}b + \dot{b}a\right)\left(\frac{2K}{h}\right)^{2}\left(\frac{E'}{K'} - 1 +\dn^{2}\left(\frac{2K\tau}{h}\right)\right),
\label{melnikovellipse}
\end{equation}
where $v = \sqrt{2E}$ is the (rescaled) speed of the particle. Since $\left(\frac{2K}{h}\right)^{2} \left(\frac{E'}{K'} - 1 +\dn^{2}\left(\frac{2K\tau}{h}\right)\right) = \sum_{n= -\infty}^{n= \infty} { \sech^{2}(\tau + nh) } >0$, as known from \cite{delshams1997melnikov}, the zeroes of $M_{1}$ only exist for values of time $t^{*}$ satisfying $(-\dot{a}b + \dot{b}a)=0$, i.e. when  $\frac{d}{dt} \left(\frac{a}{b}\right) = 0$. For such $t^{*}$, the values of $v$ and $\varphi = \varphi(\tau)$ for which the Melnikov function vanishes can be arbitrary. Hence the zeroes are of the form $(\varphi, \theta(\varphi), t^{*}, v)$. By implicit function theorem, if $DM_{1} \neq 0$, then the zeros of the Melnikov function correspond to zeros of the splitting function $d$ for all small $\varepsilon$, hence to the homoclinic intersections, and these intersections are transverse. The condition $DM_{1} \neq 0$ implies $\frac{d^{2}}{dt^{2}} \left(\frac{a(t^{*})}{b(t^{*})}\right) \neq 0$ which is satisfied if (and only if) $t^{*}$ is a nondegenerate critical point of $\frac{a(t^{*})}{b(t^{*})}$.
\end{proof}

This gives us the existence, for all small $\varepsilon$, of a transverse intersection $\tilde\Gamma$ of $W^u(\tilde\Lambda)$ and $W^s(\tilde\Lambda)$ along a smooth two-dimensional surface close to the surface $\{t=t^*, \theta=\theta^{s,u}_{0,0}(\varphi, E, t^{*})\}$.
Since $\varepsilon$ is just an energy scaling parameter, we obtain the transverse homoclinic intersection $\Gamma$
of $W^u(\Lambda)$ and $W^s(\Lambda)$ for all sufficiently large values of the non-rescaled energy $\mathcal{E}$.

In the limit $\varepsilon=0$ (i.e., in the limit $\mathcal{E}\to+\infty$) the projection $\pi^u$ of this homoclinic surface to the cylinder $\Lambda$ by the strong-unstable fibers, i.e., the domain of definition of the scattering map $S_{\tilde\Gamma}$
shrinks to just the vertical line $t=t^*$ (recall that the strong-unstable fibers are close to the lines $(E,t)=const$ at small $\epsilon$). In other words, we cannot define the scattering map at $\delta = 0$ by first order expansion in $\varepsilon$.
We, therefore, proceed to the case $\delta\neq 0$.

Let us introduce the Melnikov function $M_{2}$ corresponding to the quartic polynomial perturbation $O(\delta)$ \textit{only}, with the time and speed variables frozen. The following formula for $M_{2}$ is again derived in Appendix A:
\begin{equation}
M_{2}(x_{0}) := \sum_{n = -\infty}^{\infty}  { \langle \nabla I(B_{0}(x_{n}), B_{2}(x_{n}) \rangle},
\label{m2}
\end{equation}
where the series is evaluated as before over the orbit of $x_{0} \in W_{2}(\tilde{\Lambda})$ under the map $B_{0}$ and $I$ is the first integral given by (\ref{integs}). Using the same notation as for $M_{1}$ above, we find from Appendix B that (see also \cite{delshams1996poincare}):
\begin{equation}
M_{2} = -4m \frac{ab^{2}}{c^2} \left(\frac{2K}{h}\right)^{3} \dn \left(\frac{2K\tau}{h}\right) \sn \left(\frac{2K\tau}{h}\right) \cn \left(\frac{2K\tau}{h}\right).
\label{melnikdelta}
\end{equation}
It is known \cite{delshams1996poincare} that for each fixed $t$, the function $M_{2}$ is $h$-periodic in $\tau$ and has two simple, in terms of $\tau$, zeroes in the period $[0,h)$.

Before we proceed to prove Theorem 1.3, let us introduce some notation. Rewrite (\ref{melnikovellipse}) as $M_{1} = \frac{f(t)g(\tau,t)}{v}$, and  (\ref{melnikdelta}) as $M_{2} = j(\tau,t)$, where
\begin{align}\label{fgj}
\begin{split}
f(t) =  -\dot{a}b + \dot{b}a,
\\
g(\tau,t) = 4b\left(\frac{2K}{h}\right)^{2}\left(\frac{E'}{K'} - 1 +\dn^{2}\left(\frac{2K\tau}{h}\right)\right) > 0,
\\
j(\tau,t) = -4m \frac{ab^{2}}{c^2} \left(\frac{2K}{h}\right)^{3} \dn \left(\frac{2K\tau}{h}\right) \sn \left(\frac{2K\tau}{h}\right) \cn \left(\frac{2K\tau}{h}\right).
\end{split}
\end{align}

Let us also define the function $\phi(t)$ by:
\begin{equation}
\phi(t) = \min_{\tau} \left| \frac{g(\tau,t)}{j(\tau, t)}\right| /  \sqrt{2}.
\label{phi}
\end{equation}
Note that it is seen from the properties of elliptic functions $\dn$, $\cn$ and $\sn$ that for each $t$, the function $\frac{g(\tau,t)}{j(\tau, t)}$ is non-zero, odd and periodic in $\tau$, with one maximum and one minimum point in the period $\tau \in [0, h(t))$.

\begin{proposition} Let $(v,t)$ be such that $\frac{\varepsilon |f(t)|}{| \delta| v} < \max_{\tau} |\frac{j(\tau, t)}{g(\tau, t)}| -\tilde{k}$ where $\tilde{k}>0$ is constant and $v=\sqrt{2E}$ is the rescaled particle speed. Let $\bar{\Lambda}$
be the subset of $\tilde{\Lambda}$ corresponding to these values of $(E,t)$. Then the stable and unstable manifolds $W^{s,u}_{2}(\bar{\Lambda})$ have a strong-transverse intersection along a $2$-dimensional homoclinic manifold 
$\Gamma$ for all small $\varepsilon, \delta \neq 0$ such that $|\delta|  \gg \varepsilon^{2}$.
\end{proposition}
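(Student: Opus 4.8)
\emph{Proof strategy.} The plan is to produce, at every base point $(\bar E,\bar t)$ of $\bar\Lambda$, a non-degenerate zero --- in the variable $\tau$, equivalently in $\varphi_0$ --- of the splitting distance $\bar d$ between $W^s_2(\tilde\Lambda)$ and $W^u_2(\tilde\Lambda)$, and then to read off from it the transversality (\ref{trans}) and, through the criterion announced in this section, the strong transversality (\ref{transf}). First I would record the two-parameter Melnikov expansion of $\bar d$. Since the $O(\varepsilon)$ part $B_1$ and the $O(\delta)$ part $B_2$ of the billiard map act independently at first order, combining the computations behind (\ref{melnikovellipse}) and (\ref{melnikdelta}) gives, for $x_0=(\varphi_0,\theta_0,\bar E,\bar t)\in W_2(\tilde\Lambda)$,
\[
\bar d(x_0,\varepsilon,\delta)=\varepsilon M_1(x_0)+\delta M_2(x_0)+R,\qquad R=O\!\left((\varepsilon+|\delta|)^2\right),
\]
with the same expansion, and error estimate, for $\partial_\tau\bar d$ (the renormalization hidden in $\bar d$ is inessential for the zeros and their non-degeneracy). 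Using the notation (\ref{fgj}) and writing $v=\sqrt{2\bar E}$, this reads $\bar d=\delta\big(A\,g(\tau,\bar t)+j(\tau,\bar t)\big)+R$, where the $\tau$-independent constant $A:=\varepsilon f(\bar t)/(\delta v)$ satisfies $|A|\le\max_\tau|j/g|-\tilde k$ by the very definition of $\bar\Lambda$.

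Next I would locate the zero of the leading term $\bar d_0:=\delta(Ag+j)$. Since $g>0$, its zeros are exactly the solutions of $(j/g)(\tau,\bar t)=-A$. The properties of $g/j$ recorded above imply that $j/g$ is smooth, odd and $h$-periodic in $\tau$, that its range is precisely $[-\max_\tau|j/g|,\max_\tau|j/g|]$, and that its only critical points are the two at which it attains $\pm\max_\tau|j/g|$. Because $-A$ lies in this range and is at distance $\ge\tilde k$ from $\pm\max_\tau|j/g|$ (this is the role of the margin $\tilde k$), there is $\tau^*=\tau^*(\bar E,\bar t)$ with $(j/g)(\tau^*,\bar t)=-A$ and $\partial_\tau(j/g)(\tau^*,\bar t)\ne0$; by compactness of $\mathbb{S}^1$, $|\partial_\tau(j/g)(\tau^*,\bar t)|\ge c_1(\tilde k)>0$ uniformly over $\bar\Lambda$. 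Since $Ag+j=g\,(A+j/g)$, we get $\partial_\tau\bar d_0(\tau^*)=\delta\,g(\tau^*,\bar t)\,\partial_\tau(j/g)(\tau^*,\bar t)$, hence $c\,|\delta|\le|\partial_\tau\bar d_0(\tau^*)|\le C\,|\delta|$ and $|\partial^2_\tau\bar d_0|\le C\,|\delta|$, with $c,C>0$ uniform in $(\bar E,\bar t)\in\bar\Lambda$ and in small $\varepsilon,\delta$.

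Then comes the persistence step. On a $\tau$-interval of a fixed length around $\tau^*$, the bounds on $\partial_\tau\bar d_0$ and $\partial^2_\tau\bar d_0$ make $\bar d_0$ strictly monotone there with total variation $\gtrsim|\delta|$. Since $R$ and $\partial_\tau R$ are $O((\varepsilon+|\delta|)^2)=o(|\delta|)$ precisely under the hypothesis $|\delta|\gg\varepsilon^2$ (then $\varepsilon^2/|\delta|\to0$, while $|\delta|,\varepsilon\to0$), the sum $\bar d=\bar d_0+R$ still changes sign on this interval and keeps $\partial_\tau\bar d\ne0$ there, so it has a unique zero $\bar\tau(\bar E,\bar t)$ with $\partial_\tau\bar d\ne0$, i.e. $\partial_{\varphi_0}\bar d\ne0$ (recall $\partial\tau/\partial\varphi=1/\sin\varphi\ne0$ on the range $\varphi\in(\beta,\pi-\beta)$ on which the manifolds are graphs over $\varphi$). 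Letting $(\bar E,\bar t)$ run over $\bar\Lambda$, the implicit function theorem makes $\bar\tau$ a smooth function of $(\bar E,\bar t)$, so the corresponding homoclinic points form a smooth $2$-dimensional homoclinic manifold $\Gamma$, and the fiber description (\ref{fiberE})--(\ref{fiberthet}) shows $\pi^u(\Gamma)$ is $O(\varepsilon+|\delta|)$-close to $\bar\Lambda$. Finally, a non-degenerate zero of $\bar d$ in $\varphi_0$ forces (\ref{trans}) because $W^{s,u}_2$ are codimension-one graphs over $\varphi$, and --- by the criterion of this section --- the same condition $\partial_{\varphi_0}\bar d\ne0$ forces the strong-transversality conditions (\ref{transf}). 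Thus $\Gamma$ is a homoclinic channel over $\bar\Lambda$, which is the assertion.

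The step I expect to be the main obstacle is the smallness of the transversality one obtains. Near the boundary of $\bar\Lambda$ the two leading contributions $\varepsilon M_1$ and $\delta M_2$ are both of size $|\delta|$ and nearly cancel, so the non-degenerate part of $\partial_\tau\bar d$ at the homoclinic point is only of order $|\delta|$; the whole argument therefore rests on controlling the Melnikov remainder strictly below this scale, and doing so uniformly in $(E,t)\in\bar\Lambda$ (and in the admissible $\varepsilon,\delta$). The hypothesis $|\delta|\gg\varepsilon^2$ is exactly what renders the $O((\varepsilon+|\delta|)^2)$ remainder negligible against this $O(|\delta|)$ non-degeneracy.
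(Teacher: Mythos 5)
Your proposal is correct and follows essentially the same route as the paper's proof: the two-parameter Melnikov expansion, division by $\delta$ to reduce to the equation $j/g=-A$ with $A=\varepsilon f(t)/(\delta v)$, use of the margin $\tilde k$ and the one-max/one-min structure of $j/g$ to get uniformly non-degenerate zeros, the IFT to obtain the smooth two-dimensional manifold $\Gamma$, and the closeness of the strong fibers to $(E,t)=\mathrm{const}$ to upgrade transversality to strong transversality. Your treatment is somewhat more explicit than the paper's about the uniformity of the lower bound on $\partial_\tau\bar d$ and about why $|\delta|\gg\varepsilon^2$ makes the remainder (and its $\tau$-derivative) negligible against the $O(|\delta|)$ non-degeneracy, but this is a refinement of the same argument, not a different one.
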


\begin{proof} As shown in Appendix A, the distance $d(x_{0}, \varepsilon, \delta)$ measured in the normal direction at the point $x_{0} = (\varphi_{0}, \theta_{0}, E_{0}, t_{0}) \in W_{2}(\tilde{\Lambda})$ between the perturbed manifolds $W^{s,u}_{2}(\tilde{\Lambda})$ is given by 
\begin{equation}
d(x_{0}, \varepsilon, \delta) = \varepsilon M_{1}(x_{0}) + \delta M_{2}(x_{0}) + O\left(\varepsilon^{2} + \delta^{2}\right).
\label{modifieddist}
\end{equation}

To deal with two small parameters $\varepsilon$ and $\delta$, let us consider
\begin{equation}
\frac{d(x_{0}, \varepsilon, \delta)}{\delta} = \frac{\varepsilon}{\delta} M_{1}(x_{0})  +   M_{2}(x_{0}) + O(\frac{\varepsilon^{2}}{\delta} + \delta) =  \frac{ \varepsilon f(t)g(\tau,t)}{\delta v} +  j(\tau,t) + O\left(\frac{\varepsilon^{2}}{\delta} + \delta\right).
\label{mm}
\end{equation}

We have from the assumption of the proposition that $\frac{\varepsilon M_{1}(x_{0})}{\delta}$ is bounded for all small nonzero $\delta$ and that terms of order $O\left(\frac{\varepsilon^{2}}{\delta} + \delta \right)$ are uniformly small. Then, if we drop the $O\left(\frac{\varepsilon^{2}}{\delta} + \delta \right)$ terms in (\ref{mm}) and define 
\begin{equation}
\bar{d} = \frac{ \varepsilon f(t)g(\tau,t)}{\delta v} +  j(\tau,t),
\label{melnikovnotation}
\end{equation}
it follows from the implicit function theorem that if $\frac{\partial \bar{d}}{\partial \tau}|_{\bar{d}=0} \neq 0$, then the zeroes of $\bar{d}$ correspond to a transverse intersection of $W^{s,u}_{2}(\tilde{\Lambda})$ along a $2$-dimensional homoclinic manifold $\Gamma$ for all small $\frac{\varepsilon^{2}}{\delta}$ and $\delta$. This manifold is a graph of a smooth function
$\tau$ of $(v,t)$, i.e., it is a graph of a smooth function $\varphi$ of $(E,t)$. Since the strong-stable and strong-unstable
fibers are $\varepsilon$-close to $(E,t)=const$, we immediately have that the conditions (\ref{trans}),(\ref{transf}) of 
the strong transverse intersection are fulfilled at the points of $\Gamma$.

Thus, to prove the proposition, we need to investigate for which values of $v$ and $t$ the function $\bar{d}(\tau)$ has 
simple zeroes. Rearranging (\ref{melnikovnotation}) yields
\begin{equation}
\frac{\varepsilon f(t)}{\delta v} = -\frac{j(\tau, t)}{g(\tau, t)}.
\label{inters}
\end{equation}
Let us fix any $t$ and $v$. Since $g(\tau,t) >0$ and $v>0$, solutions of this equation belong to the interval of values of $\tau$ for which the sign of $j(\tau, t)$ is the same as the sign of $f(t)$. Degenerate zeros correspond to critical points of 
the function  $\mbox{fix}_{t} \frac{j(\tau,t)}{g(\tau,t)}$. It can be checked that it has exactly one (positive) maximum
and one (negative) minimum in the period $\tau \in [0, h(t))$; since this function is odd, the maximum and minimum are the same in the absolute value. It follows that for any constant $\tilde{k}>0$, given $t$ and $v$ such that
\begin{equation}
\frac{\epsilon |f(t)|}{| \delta | v} < \max_{\tau} |\frac{j(\tau, t)}{g(\tau, t)}| -\tilde{k},
\label{vtset}
\end{equation}
equation (\ref{inters}) has exactly two solutions $\tau(t,v)$, both of them non-degenerate.

As we explained above, these solutions correspond to strong-transverse homoclinic intersections, provided $\delta$, $\varepsilon$ and $\varepsilon^2/\delta$ are small enough.
\end{proof}

Now we can prove Theorem 1.3.
\begin{proof}
We have shown in Proposition 4.3 that the values of $(v,t)$ satisfying (\ref{vtset}) correspond to a strong-transverse
homoclinic intersection, i.e., they lie in the domain of definition of the scattering map $S_{\Gamma}$.
It is also seen from (\ref{inters}) that if, for some $\tilde k>0$,
\begin{equation}
\frac{\varepsilon |f(t)|}{| \delta | v} > \max_{\tau} |\frac{j(\tau, t)}{g(\tau, t)}| + \tilde{k},
\label{vtset0}
\end{equation}
then the function $d$ does not have zeros for small $\delta$, $\frac{\varepsilon^{2}}{\delta}$ and $\varepsilon$, so 
the region (\ref{vtset0}) is not included in the domain of the scattering map. 

Now, by noting that $\varepsilon/v=1/\sqrt{2\mathcal{E}}$ where $\mathcal{E}$ is the non-rescaled energy,
we immediately obtain the statement of the theorem from estimates (\ref{vtset}),(\ref{vtset0}).
\end{proof}

Now we proceed to obtain a first order expression in perturbation parameters $\varepsilon, \delta$ for $S_{\Gamma}$ in terms of $E$ and $t$. Let $(\bar{E}_{0}, \bar{t}_{0}) \in  \bar{\Lambda}$ and $(\tilde{E}_{0}, \tilde{t}_{0}) \in \bar{\Lambda}$ be two points in the domain of $S_{\Gamma}$ given by (\ref{eqbarlambda+}). We will obtain a perturbative expression up to $\varepsilon O(\varepsilon + |\delta|)$ for the scattering map $S_{\Gamma}: \bar{\Lambda} \mapsto \bar{\Lambda}$:
\begin{equation}
S_{\Gamma}: (\bar{E}_{0}, \bar{t}_{0}; \varepsilon, \delta ) \mapsto (\tilde{E}_{0}, \tilde{t}_{0}; \varepsilon, \delta).
\label{scatteringexact}
\end{equation}
We will call the first order approximation of $S_{\Gamma}$ the truncated scattering map. 

Let us derive a formula for $\phi^{ss,uu}_{1,2}$ given by (\ref{fiberE}), (\ref{fibert}). Let us take a point $(\varphi_{0}, \theta_{0}, E_{0}, t_{0}) \in \Gamma \pitchfork W^{ss}_{(\bar{E}_{0}, \bar{t}_{0})}$. Its energy component $E_{0}$ is given by (\ref{fiberE}):
$$E_{0} = E^{ss}(\varphi_{0}; \bar{E}_{0}, \bar{t}_{0}, \varepsilon, \delta) = \bar{E}_{0} + \varepsilon \phi_{1}^{ss}(\varphi_{0}; \bar{E}_{0}, \bar{t}_{0},0, 0) + \varepsilon O(\varepsilon + |\delta|).$$

Let $B \left(W^{ss}_{(\bar{E}_{0}, \bar{t}_{0})} \right)$ denote the action of map $B$ on the point $(\varphi_{0}, \theta_{0}, E_{0}, t_{0})$ on the leaf $W^{ss}_{(\bar{E}_{0}, \bar{t}_{0})}$.  Let $(\varphi_{n, \varepsilon, \delta}, \theta_{n, \varepsilon, \delta}, E_{n, \varepsilon, \delta}, t_{n, \varepsilon, \delta})$ be the orbit of the point $(\varphi_{0}, \theta_{0}, E_{0}, t_{0})$ under $B$, and let $(\bar{E}_{n, \varepsilon}, \bar{t}_{n, \varepsilon})$ be the orbit of $(\bar{E}_{0}, \bar{t}_{0}) \in \bar{\Lambda}$ under $B$ (the notation is the same as in (\ref{components})). Note that on $\bar{\Lambda}$ we have $(\varphi_{n, \varepsilon}, \theta_{n, \varepsilon}) = (0, \frac{\pi}{2})$  for all $n$, and there is no dependence on $\delta$ of $B$ restricted to $\Lambda$.

Consider the energy $E$ component of $B \left(W^{ss}_{(\bar{E}_{0}, \bar{t}_{0})} \right)$.  Using the notation (\ref{components}) gives
\begin{equation}
\begin{split}
E_{1, \varepsilon} = E_{1, 0} + \varepsilon f_{3}(\varphi_{0}, \theta_{0}(\varphi_{0}), E_{0}, t_{0}) + \varepsilon O(\varepsilon + |\delta|)
\\
=E_{0} + \varepsilon f_{3}(\varphi_{0}, \theta_{0}(\varphi_{0}),  E_{0}, t_{0}) + \varepsilon O(\varepsilon + |\delta|)
\\
=\bar{E}_{0} +  \varepsilon \phi_{1}^{ss}(\varphi_{0}(\varphi_{0}), \bar{E}_{0}, \bar{t}_{0}, 0) + \varepsilon f_{3}(\varphi_{0}, \theta_{0}(\varphi_{0}), E_{0}, t_{0}) + \varepsilon O(\varepsilon + |\delta|),
\end{split}
\label{invfol}
\end{equation}
where $f_{3}(\varphi_{0}, \theta_{0}(\varphi_{0}), E_{0}, t_{0})$ is evaluated on the unperturbed homoclinic trajectory on the unperturbed homoclinic manifold $W_{2}(\bar{\Lambda})$ given by (\ref{w22}). The invariance property of the stable and unstable foliations gives $B \left(W^{ss}_{(\bar{E}_{0}, \bar{t}_{0})} \right) = W^{ss}_{B\left(\bar{E}_{0}, \bar{t}_{0}\right)}$ and, by definition of the map $B$, we have $W^{ss}_{B\left(\bar{E}_{0}, \bar{t}_{0})\right)} = W^{ss}_{(\bar{E}_{1, \varepsilon}, \bar{t}_{1, \varepsilon})}$. Therefore, we have 
\begin{equation}
\begin{split}
 E_{1, \varepsilon} = E(\varphi_{1, \varepsilon}; \bar{E}_{1, \varepsilon}, \bar{t}_{1, \varepsilon}, \varepsilon, \delta)
 \\
 = \bar{E}_{1, \varepsilon} + \varepsilon \phi_{1}^{ss}(\varphi_{1}, \bar{E}_{1}, \bar{t}_{1}, 0,0) + \varepsilon O(\varepsilon + |\delta|)
 \\
 = \bar{E}_{0} + \varepsilon f_{3}(\cdot, \bar{E}_{0}, \bar{t}_{0})|_{\bar{\Lambda}} +  \varepsilon \phi_{1}^{ss}(\varphi_{1}, \bar{E}_{0}, \bar{t}_{0}, 0, 0) + \varepsilon O(\varepsilon + |\delta|),
\end{split}
\label{invcyl}
\end{equation}
where we used $\bar{E}_{1} = \bar{E}_{0}$, $\bar{t}_{1} = \bar{t}_{0}$ and $\bar{E}_{1, \varepsilon} = \bar{E}_{0} + \varepsilon f_{3}(\cdot, \bar{E}_{0}, \bar{t}_{0})|_{\bar{\Lambda}}$ in the last line above, with $f_{3}(\cdot,\bar{E}_{0}, \bar{t}_{0})| _{\Lambda}$ denoting the restriction of $f_{3}$ to $\bar{\Lambda}$. The notation $f_{3}(\cdot,\bar{E}_{0}, \bar{t}_{0})| _{\bar{\Lambda}}$ signifies that the variables $(\varphi, \theta)$ on $\bar{\Lambda}$ are fixed at $(0, \pi/2)$. 

Examining the coefficients of $O(\varepsilon)$ terms in (\ref{invfol}) and (\ref{invcyl}) gives 
\begin{equation}
\phi_{1}^{ss}(\varphi_{0}; \bar{E}_{0}, \bar{t}_{0},0, 0) = \phi_{1}^{ss}(\varphi_{1}; \bar{E}_{0}, \bar{t}_{0}, 0, 0) + f_{3}(\cdot , \bar{E}_{0}, \bar{t}_{0})|_{\Lambda} - f_{3}(\varphi_{0},\theta_{0}(\varphi_{0}), E_{0}, t_{0}).
\label{recurrence}
\end{equation}
Upon iterating (\ref{recurrence}), we obtain an expression for $\phi^{ss}_{1}(\varphi_{0}; \bar{E}_{0}, \bar{t}_{0},0, 0)$:
\begin{equation}
\phi^{ss}_{1}(\varphi_{0}; \bar{E}_{0}, \bar{t}_{0},0, 0) = \phi^{ss}_{1}(\varphi_{\infty}; \bar{E}_{\infty}, \bar{t}_{\infty},0, 0) + \sum_{i=0}^{\infty} \{ f_{3}(\cdot, \bar{E}_{i}, \bar{t}_{i})|_{\Lambda}- f_{3}(\varphi_{i}, \theta_{i}(\varphi_{i}), E_{i}, t_{i})|_{W_{2}(\bar{\Lambda})} \},
\end{equation}
where $\varphi_{\infty} = 0$, $\bar{E}_{\infty} = \bar{E}_{0}$ and $\bar{t}_{\infty} = \bar{t}_{0}$, and $f_{3}(\varphi_{i}, \theta_{i}(\varphi_{i}), E_{i}, t_{i})$ is evaluated over the unperturbed homoclinic trajectory on $W_{2}(\bar{\Lambda})$ given by (\ref{w22}). Therefore the slow variables $(E,t)$ in the summation are held constant with $\bar{E} = E_{i} = E_{0}, \bar{t} = t_{i} = t_{0}$. Hence, the equation for the energy $E$ component of the strong stable leaf $W^{ss}_{(\bar{E}_{0}, \bar{t}_{0})}$ through $(E_{0}, t_{0})$ is
\begin{equation}
E_{0} = \bar{E}_{0} + \varepsilon \phi^{ss}_{1}(0;\bar{E}_{0}, \bar{t}_{0},0, 0) + \varepsilon \sum_{i=0}^{\infty} \{ f_{3}(\cdot , \bar{E}_{i}, \bar{t}_{i})|_{\Lambda}- f_{3}(\varphi_{i}, \theta_{i}(\varphi_{i}), E_{i}, t_{i})|_{W_{2}(\bar{\Lambda})} \} +
\varepsilon O(\varepsilon + |\delta|).
\label{bare}
\end{equation}
\
Similarly, we express the $E$ component of the strong unstable leaf $W^{uu}_{(\tilde{E}_{0}, \tilde{t}_{0})}$ through $(\varphi_{0}, \theta_{0}, E_{0}, t_{0}) \in \Gamma \pitchfork W^{uu}_{(\tilde{E}_{0}, \tilde{t}_{0})}$  with the base point $(\tilde{E}_{0}, \tilde{t}_{0})$ as
\begin{equation}
E_{0} = \tilde{E}_{0} + \varepsilon \phi^{uu}_{1}(0;\tilde{E}_{0}, \tilde{t}_{0},0, 0) - \varepsilon \sum_{i=- \infty}^{-1} \{ f_{3}(\cdot, \tilde{E}_{i}, \tilde{t}_{i})|_{\Lambda} - f_{3}(\varphi_{i}, \theta_{i}(\varphi_{i}),  E_{i}, t_{i})|_{W_{2}(\bar{\Lambda})} \} + \varepsilon O(\varepsilon + |\delta|).
\label{tilden}
\end{equation}

Using that $\phi^{ss}_{1}(0;\bar{E}_{0}, \bar{t}_{0},0, 0) -\phi^{uu}_{1}(0;\tilde{E}_{0}, \tilde{t}_{0},0, 0)=0$ since the unperturbed stable and unstable foliations coincide, and subtracting  (\ref{tilden}) from (\ref{bare}) gives an expression for  the $E$ component of the truncated scattering map up to $ \varepsilon O(\varepsilon + |\delta|)$:
\begin{equation}
\tilde{E}_{0} = \bar{E}_{0} + \varepsilon \sum_{i=- \infty}^{ \infty} \{ f_{3}(\cdot, E_{i}, t_{i})|_{\Lambda} - f_{3}(\varphi_{i}, \theta_{i}(\varphi_{i}), E_{i}, t_{i})|_{W_{2}(\bar{\Lambda})} \}.
\label{scatteringE}
\end{equation}
\
Analogously, for the $t$ components of the truncated map $S_{\Gamma}$ we obtain:
\begin{equation}
\tilde{t}_{0} = \bar{t}_{0} + \varepsilon \sum_{i = - \infty}^{ \infty} \{f_{4}(\cdot, E_{i}, t_{i})|_{\Lambda} - f_{4}(\varphi_{i}, \theta_{i}(\varphi_{i}), E_{i}, t_{i})|_{W_{2}(\bar{\Lambda})} \}.
\label{scatteringt}
\end{equation}

We see that $S_{\Gamma}$ is close to identity and may be approximated by a time-$\varepsilon$ shift of some Hamiltonian flow 
$H_{out}(t,E)$ with the accuracy $\varepsilon O(\varepsilon + |\delta|)$. From (\ref{scatteringE}) and (\ref{scatteringt}), we have Hamilton's equations for $H_{out}(t, E)$ with $' = \frac{d}{ds}$ denoting differentiation with respect to auxiliary time $s$:

$$E' = \sum_{i=-\infty}^{\infty} \{ f_{3}(\cdot, E_{i}, t_{i})|_{\Lambda} - f_{3}(\varphi_{i}, \theta_{i}(\varphi_{i}), E_{i}, t_{i})|_{W_{2}(\bar{\Lambda})} \}, \qquad t' =   \sum_{i=-\infty}^{\infty} \{  f_{4}(\cdot, E_{i}, t_{i})|_{\Lambda} -f_{4}(\varphi_{i}, \theta_{i}(\varphi_{i}), E_{i}, t_{i})|_{W_{2}(\bar{\Lambda})} \}.$$

Let us compute the above infinite sums. Recalling the definitions of $f_{3}$ and $f_{4}$ given by (\ref{f3}, \ref{f4}) and that the summation is performed over the fast variables $(\varphi, \theta)$ while the slow variables $(E,t)$ are held fixed at their initial value $(E_{i}=E_{0}, t_{i} = t_{0})$,  it turns out (see Appendix C) that the sum for $E'$ may be computed to give

\begin{equation}
\sum_{i=-\infty}^{\infty}  \{ f_{3}(\cdot, E_{i}, t_{i})|_{\Lambda} - f_{3}(\varphi_{i}, \theta_{i}(\varphi_{i}), E_{i}, t_{i})|_{W_{2}(\bar{\Lambda})} \} = 2\sqrt{2E_{0}} \sum_{i = -\infty}^{\infty}  \{ {- \dot{a}(t_{i+1}) + u_{i+1} \sin (\theta_{i+1}) } \} = 2 \sqrt{2E_{0}}\left(\frac{b\dot{b}-a\dot{a}}{c}\right),
\label{Efib}
\end{equation}
where $u_{i+1}$ is the normal speed of the boundary at $(i+1)$-th impact. The expression for $t'$ may be computed using the geometric property of the ellipse \cite{ramirez2005exponentially} that the sum of homoclinic lengths converges to $-2c(t_{0})$:
\begin{equation}
\sum_{i=-\infty}^{\infty}  \{ f_{4}(\cdot, E_{i}, t_{i})|_{\bar{\Lambda}} - f_{4}(\varphi_{i}, \theta_{i}(\varphi_{i}), E_{i}, t_{i})|_{ W_{2}(\bar{\Lambda})} \} = \frac{1}{\sqrt{2E_{0}}} \sum_{i = -\infty}^{\infty} \{ {2a(t_{i}) - D_{0}|_{ W_{2}(\bar{\Lambda})} } \} = \frac{2c(t)}{\sqrt{2E}},
\label{tfiber}
\end{equation}
where $D_{0}|_{ W_{2}(\tilde{\Lambda})}$ is flight distance  $D_{0}$ (see (\ref{euclidzero})) evaluated on  $W_{2}(\tilde{\Lambda})$. Therefore the scattering map $S_{\Gamma}$ is approximated up to $O(\varepsilon(\varepsilon + |\delta|))$ by a time-$\epsilon$ shift along a trajectory of the solution of the differential equation
$$\frac{dE}{dt} = -2E \frac{\dot{c}(t)}{c(t)}.$$
\
This corresponds to  the Hamiltonian vector field
\begin{equation}
t'= \frac{\partial H_{out}}{\partial E}  = \frac{\sqrt{2}c(t)}{\sqrt{E}}, \qquad E'  = -\frac{\partial H_{out}}{\partial E} = -2\sqrt{2E}\dot{c}(t).
\label{hamiltonsouteq}
\end{equation}
\
Therefore we obtain the following Hamiltonian $H_{out}(t,E)$ defined on $\Lambda$ that approximates $S_{\Gamma}$:
\begin{equation}
H_{out}(t,E) = 2\sqrt{2E}c(t).
\label{outh}
\end{equation}

\section{Energy growth}

In \cite{gelfreich2017arnold} it is proved (Lemma 4.4) that if two points on a normally-hyperbolic invariant manifold
of a symplectic diffeomorphism are connected by an orbit of the iterated function system (IFS) formed by the inner and scattering maps, then there exists a trajectory of the original diffeomorphism that connects arbitrarily small neighbourhoods of those two points. Lemmas 3.11 and 3.12 of \cite{gidea2014general} show that the same is true when orbits of an IFS are infinite (in one direction).

In this section we prove Theorem 1.1 using these facts. Namely, we consider an iterated function system $\{\Phi, S_{\Gamma}\}$ 
comprised of the inner map $\Phi$ defined by (\ref{innerscaled}) and the scattering map $S_{\Gamma}$ defined by (\ref{scatteringexact}),
and show that it has an orbit with the energy $\mathcal{E}$ tending to infinity. By the above quoted results, the  existence of the orbit of the map $B$ for which the energy grows to infinity follows too.

We start with the analysis of the behaviour of the IFS in the rescaled coordinates.
\begin{lemma}
For any initial condition $(E,t) \in \tilde{\Lambda}$ with $|\delta| \gg \varepsilon^{2}$, there exist positive integers 
$n_{1}, n_{2}$, where $n_{1} + n_{2}  = O\left(\frac{1}{\varepsilon}\right)$, such that 
the gain $\Delta H_{in}$ of the Hamiltonian $H_{in}(t,E; \varepsilon)$ (given in Remark 4.2) along the orbit
$S_{\Gamma}^{n_2} \circ \Phi^{n_1}$ of the IFS 
$\{\Phi), S_{\Gamma}\}$ is $\Delta H_{in} \geq K_{1} \min \left(\frac{\delta^2}{\varepsilon^2}, 1 \right)$ where $K_{1}$ is a strictly positive constant. 
\label{energylemma}
\end{lemma}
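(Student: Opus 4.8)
The plan is to work entirely with the two approximating Hamiltonians $H_{in}(t,E)=2\sqrt{2E}\,a(t)$ and $H_{out}(t,E)=2\sqrt{2E}\,c(t)$ on the cylinder $\Lambda$, exploiting the fact that the inner map $\Phi$ is, to the orders we need, the time-$\varepsilon$ shift of $H_{in}$ and the scattering map $S_\Gamma$ is the time-$\varepsilon$ shift of $H_{out}$ on its domain $\bar\Lambda$. The key geometric observation is that the level curves of $H_{in}$ and $H_{out}$ are \emph{different} (since $a(t)\neq c(t)=\sqrt{a^2-b^2}$ as functions of $t$, because the hypothesis that $a/b$ has a nondegenerate critical point forces $a$ and $c$ to be genuinely nonconstant and non-proportional), so the two foliations of $\Lambda$ are transverse away from isolated tangency lines. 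Hence moving along an $H_{out}$-leaf carries one across $H_{in}$-leaves, and vice versa, and the combined IFS is not confined to any invariant curve.

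First I would set up the bookkeeping: a segment $\Phi^{n_1}$ of the IFS changes $H_{out}$ but (to leading order) preserves $H_{in}$, while a segment $S_\Gamma^{n_2}$ preserves $H_{out}$ but changes $H_{in}$. Using the Hamiltonian vector fields (\ref{hamiltonseq}) and (\ref{hamiltonsouteq}), the rate of change of $H_{in}$ along the $H_{out}$-flow is a bounded nonzero function of $(E,t)$ on the region where the two foliations are transverse; more precisely $\{H_{in},H_{out}\}$, computed from (\ref{innh}), (\ref{outh}), is proportional to $\sqrt{2E}\,(a\dot c - c\dot a)$ up to the $\sqrt{E}$ normalisation, and this Poisson bracket is nonvanishing except on the critical lines. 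Over an $s$-time of order $1$ (which corresponds to $n_2=O(1/\varepsilon)$ iterations of $S_\Gamma$, since each application advances auxiliary time by $\varepsilon$), the gain in $H_{in}$ is therefore bounded below by a positive constant $K_1$, \emph{provided} the whole orbit segment stays inside $\bar\Lambda$. If instead the starting point, or the point reached after the preliminary $\Phi^{n_1}$ excursion, is \emph{not} in $\bar\Lambda$, I would first run $\Phi$ (which is always defined) to drift along an $H_{in}$-level curve: by Theorem 1.3 the complement of $\bar\Lambda$ is the region $\sqrt{\mathcal E}\lesssim |f(t)|\phi(t)/|\delta|$, i.e. $v \lesssim \varepsilon|f(t)|\phi(t)/|\delta|$ in rescaled variables, and following an $H_{in}$-leaf $\sqrt E\,a(t)=\mathrm{const}$ around one period of $t$ either already enters $\bar\Lambda$ or shows that the leaf lies entirely in the forbidden region — but in the latter case the energy on that leaf is everywhere $\lesssim \varepsilon^2/\delta^2$ (rescaled), which is exactly the regime where the claimed bound $K_1\min(\delta^2/\varepsilon^2,1)$ is the small quantity $K_1\delta^2/\varepsilon^2$, and that bound is then achieved trivially by the oscillation of $H_{in}$ along a single $\Phi$-excursion within the accessible strip. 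This dichotomy — "either you can reach the scattering region and gain $O(1)$, or you are stuck in a thin strip of energy $O(\delta^2/\varepsilon^2)$ where even the trivial gain suffices" — is what produces the $\min$ in the statement.

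The main obstacle, and the step I would spend the most care on, is controlling the errors: $\Phi$ approximates the $H_{in}$-flow only up to $O(\varepsilon^{r+1})$ per step but $S_\Gamma$ approximates the $H_{out}$-flow only up to $\varepsilon O(\varepsilon+|\delta|)$ per step (from (\ref{scatteringE})–(\ref{scatteringt})), so over $O(1/\varepsilon)$ steps the accumulated error from the scattering segment is $O(\varepsilon+|\delta|)$. For the $O(1)$ gain this is harmless once $\varepsilon,\delta$ are small, but I must check that the hypothesis $|\delta|\gg\varepsilon^2$ is exactly what keeps the scattering-map correction subdominant relative to the principal term $\varepsilon f(t) g/(\delta v)$ in (\ref{mm}) that defines $\bar\Lambda$, so that the domain of $S_\Gamma$ used in the argument is genuinely the one given by Theorem 1.3. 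A secondary technical point is that one cannot simply "stay on a level curve": each application of $S_\Gamma$ moves the base point off the $H_{in}$-level curve one was tracking, and one must re-apply $\Phi$ to reach a well-defined footpoint in $\bar\Lambda$ for the next scattering step; I would handle this by the standard argument (as in the construction of \cite{gelfreich2008unbounded}) that alternating short blocks of $\Phi$ and $S_\Gamma$, choosing at each block the branch of the scattering map — recall $S_\Gamma$ has two branches from the two zeros $\tau(t,v)$ of $\bar d$ in Proposition 4.3 — that increases $H_{in}$, yields a net monotone increase whose per-block size is bounded below by the transversality of the two foliations, while the total number of blocks needed to accumulate auxiliary time $O(1)$ is $n_1+n_2=O(1/\varepsilon)$.
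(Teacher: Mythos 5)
Your overall architecture (drift along $H_{in}$-leaves with $\Phi$, then cross $H_{in}$-leaves by following an $H_{out}$-leaf with $S_\Gamma$ inside $\bar\Lambda$, control the per-step approximation errors) matches the paper's, but the core quantitative step --- where the bound $K_1\min(\delta^2/\varepsilon^2,1)$ actually comes from --- is wrong in your proposal, and your fallback branch of the dichotomy does not work. First, there is no dichotomy: since $f(t^*)=0$ at a nondegenerate critical point $t^*$ of $a/b$, the condition defining $\bar\Lambda$ (Theorem 1.3 / Proposition 4.3) is satisfied on the whole line $t=t^*$ for \emph{every} energy, and since $dt/ds=\sqrt{2}a/\sqrt{E}>0$ along the inner flow, every $H_{in}$-leaf reaches the strip $[t_1,t_2]\ni t^*$ where $S_\Gamma$ is defined. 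Your "stuck in a thin strip" alternative therefore never occurs; worse, the gain you propose to extract there --- "oscillation of $H_{in}$ along a single $\Phi$-excursion" --- is not a gain at all, because $\Phi$ preserves $H_{in}$ up to $O(\varepsilon^{r})$ over $O(1/\varepsilon)$ steps; the lemma asks for a gain of $H_{in}$, not of $E$.

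Second, your claim that inside $\bar\Lambda$ the gain over auxiliary time $O(1)$ is bounded below by a constant contradicts the geometry you yourself identify: the Poisson bracket $\{H_{in},H_{out}\}\propto a\dot c-\dot a c = ac\left(\frac{\dot c}{c}-\frac{\dot a}{a}\right)$ vanishes exactly at $t=t^*$ (since $\frac{d}{dt}(c/a)=0$ iff $\frac{d}{dt}(a/b)=0$), and at a fixed rescaled energy the accessible part of $\bar\Lambda$ is precisely a $t$-strip of width $\Delta t=c_1\min(\delta/\varepsilon,1)$ \emph{centred at that tangency line}. So the gain rate is only $O(|t-t^*|)=O(\Delta t)$ on the window where the scattering map is usable, and the total gain is the integral of a linearly vanishing quantity over an interval of length $\Delta t$, i.e.\ $\sim H_{in}\,(\Delta t)^2\sim\min(\delta^2/\varepsilon^2,1)$. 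That quadratic degeneration --- the scattering domain shrinks onto the very lines where the two foliations are tangent --- is the whole point of the $\min(\delta^2/\varepsilon^2,1)$ in the statement, and it is absent from your argument. (Your error bookkeeping is essentially right: one needs the accumulated errors $O(\varepsilon^{r})$ from $\Phi$ and $O(\Delta t(\varepsilon+|\delta|))$ from $S_\Gamma$ to be dominated by $(\Delta t)^2$, which is exactly what $|\delta|\gg\varepsilon^2$ guarantees. Also, the paper does not need to select between the two branches of $S_\Gamma$; it suffices to choose $[t_1,t_2]$ on the side of $t^*$ where $\frac{\dot c}{c}<\frac{\dot a}{a}$.)
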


\begin{proof}

Suppose that $t^{*}$ is a nondegenerate critical point of $\frac{a}{b}$. Then $f(t^{*}) = 0$, which corresponds to $\frac{\dot{c}}{c} = \frac{\dot{a}}{a}$, where $f(t)$ is defined in (\ref{fgj}). Therefore $(E,t^{*}) \in \bar{\Lambda}$ where $\bar{\Lambda}$ is the domain of $S_{\Gamma}$ as in (\ref{eqbarlambda+}) for all $E$. There exists an interval in $t$ near $t^{*}$, denote it $[t_{1}, t_{2}]$, such that $\frac{\dot{c}}{c} < \frac{\dot{a}}{a}$ and $S_{\Gamma}$ is defined at  $t \in [t_{1}, t_{2}]$.  Denote $\Delta t = t_{2} - t_{1}$. Let us obtain the lower bound on $\Delta t$.  Taking (\ref{eqbarlambda+}) and rewriting it in scaled variable $v$ using the definition (\ref{scale}), then Taylor expanding about $t^{*}$, we obtain that small $\Delta t$ can be chosen such that
\begin{equation}
v  < \frac{c_{2} \varepsilon \Delta t}{\delta}
\label{appr}
\end{equation} 
for a constant $c_{2} > 0$. In other words, we can always choose the interval $[t_1,t_2]$ such that
$$\Delta t = c_{1}\mbox{min} \left(\frac{\delta}{\varepsilon}, 1 \right)$$
where $c_{1}>0$ is constant. 

Let us take a point $(E,t) \in \tilde{\Lambda}$.  Iterate $(E,t)$ under the inner map $\Phi$ until the image $\Phi^{n_{1}}(E,t) = (E_{n_{1}}, t_{n_{1}})$  enters the domain $[t_{1}, t_{2}]$, i.e., $t_{n_{1}} \in [t_{1}, t_{2}]$, and thus the point $(E_{n_{1}}, t_{n_{1}}) \in \bar{\Lambda}$ (if $t$ is originally in the interval $[t_{1}, t_{2}]$, then iterate until it gets out of this interval and, then, returns to it again).
Note that the change in $t$ during one iteration of $\Phi$ is of order $\varepsilon$ which is much smaller than $\Delta t$ (because $\delta/\varepsilon \gg \varepsilon$ by assumption). Therefore, the iterates of $\Phi$ cannot "miss" $[t_{1}, t_{2}]$ and the number of iterations $n_1$ is bounded from above as $O(\varepsilon^{-1})$.

There exists a level curve $h_{0}$ of $H_{in}(t, E ;\varepsilon)$ passing through $(E,t)$. The orbit of $(E,t)$ under $\Phi$ will follow the level curve $h_{0}$.  Indeed, the number of iterations $n_{1}$ of $\Phi$ is bounded by a number of order $O\left(\frac{1}{\varepsilon} \right)$. Denote by $\phi^{n_{1}\varepsilon}_{h_{0}}$ the time$-n_{1}\varepsilon$ shift along $h_{0}$ with the initial condition $(E,t)$. It follows from standard mean value theorem estimates and the Remark 4.2 that $\Phi^{n_{1}}$ coincides with  $\phi^{n_{1}\varepsilon}_{h_{0}}$ up to $O(\varepsilon^{r})$. Hence we have the following bound for the difference between the level of $H_{in}$ at $(E_{n_{1}}, t_{n_{1}})$  and  $h_{0}=H_{in}(\phi^{n_{1}\varepsilon}_{h_{0}}(E,t); \varepsilon)$:
\begin{equation}
\begin{split}
||H_{in}(E_{n_{1}},t_{n_{1}}; \varepsilon) - H_{in}(\phi^{n_{1}\varepsilon}_{h_{0}}(E,t); \varepsilon)|| & \leq \mbox{max}_{(E,t) \in \tilde{\Lambda}} || DH_{in}(E,t; \varepsilon)|| ||(E_{n_{1}}, t_{n_{1}}) - \phi_{h_{0}}^{n_{1}\varepsilon}(E,t) || \\
& \leq   \mbox{max}_{(E,t) \in \tilde{\Lambda}} || DH_{in}(E,t; \varepsilon)||\tilde{C}_{1} \varepsilon^{r} \\
& < C_{2} \varepsilon^{r},
\end{split}
\label{hinbound}
\end{equation}
where $\mbox{max}_{(E,t) \in \tilde{\Lambda}} || DH_{in}(E,t; \varepsilon)||$ is bounded as $\tilde{\Lambda}$ is compact; $\tilde{C}_{1}, C_{2}>0$ are constants. Since $r\geq 4$, we have that the error in the difference of $H_{in}(t, E;\varepsilon)$ following $\Phi$ is maximum  $O(\varepsilon^4)$.

When  $t \in [t_{1}, t_{2}]$, following the level curve of $H_{out}(t, E; \varepsilon, \delta)$ will give a greater gain of energy than following $H_{in}(t, E;\varepsilon)$. Therefore we iterate $S_{\Gamma}$ while $t \in [t_{1}, t_{2}]$ and its orbit will follow the level curve of $H_{out}(t, E; \varepsilon, \delta)$. Following $H_{in}(t, E;\varepsilon)$ will switch to following $H_{out}(t, E;\varepsilon, \delta)$ when $t = t_{1} + O(\varepsilon)$ and then switch back to following another level curve of $H_{in}(t, E;\varepsilon)$ when $t = t_{2} + O(\varepsilon)$. As $\Delta t\gg\varepsilon$, it follows that the number $n_{2}$ of iterates of $S_{\Gamma}$ is $n_{2} \sim O(\frac{\Delta t}{\varepsilon})$. 

Let us consider the Hamiltonian flow given by $H_{out}(t, E) = 2 \sqrt{2E}c(t)$ as in (\ref{outh}). Since the scattering map $S_{\Gamma}$ and the time-$\varepsilon$ flow map $\phi_{H_{out}}$ of $H_{out}(t, E)$ coincide up to $O(\varepsilon(\varepsilon + |\delta|))$, we have an upper bound for the difference between the values of $H_{in}$ evaluated at $(E_{n_{2}}, t_{n_{2}})$ and at the time-$n_{2} \epsilon$ shift by the flow of $H_{out}$ with the initial condition $(E_{n_{1}}, t_{n_{1}})$ (below $C_{3} >0 $ is an irrelevant constant):
\begin{equation}
||H_{out}(E_{n_{2}},t_{n_{2}}) - H_{out}(\phi^{n_{2}\varepsilon}_{H_{out}}(E_{n_{1}},t_{n_{1}})) || < C_{3} \Delta t (\varepsilon + |\delta|)=
O(|\delta|(\varepsilon+|\delta|)).
\label{houtbound}
\end{equation}
Let us compute the change $ \Delta H_{in}$ following the level curve of $H_{out}(t,E)$. Using (\ref{outh}), we have
$$\Delta H_{in} = \int_{s_{1}}^{s_{2}} \frac{dH_{in}(t, E;\varepsilon)}{ds} \mid _{H_{out}=\mbox{const}} ds =  \int_{t_{1}}^{t_{2}} \frac{dH_{in}(t, E;\varepsilon)}{ds} \mid _{H_{out}=\mbox{const}} \frac{ds}{dt}dt. $$
where $t_{1,2} = t_{1,2}(s_{1,2})$ and $s$ is the auxiliary time variable. Since $\frac{dH_{in}(t, E;\varepsilon)}{ds} = \frac{\partial H_{in}(t, E;\varepsilon)}{\partial E} \frac{dE}{ds} + \frac{\partial H_{in}(t, E;\varepsilon)}{\partial t} \frac{dt}{ds}$ where we use (\ref{hamiltonsouteq}) for $\frac{dE}{ds}$ and $\frac{dt}{ds}$, yielding $\frac{dH_{in}(t, E;\varepsilon)}{ds} = 4(\dot{a}c - \dot{c}a) + O(\varepsilon )$. Thus
$$\Delta H_{in} = \int_{t_{1}}^{t_{2}} \left(4(\dot{a}c - \dot{c}a) + O(\varepsilon ) \right) \frac{ds}{dt}dt  = 
\int_{t_{1}}^{t_{2}} \left[H_{in}(t, E; 0) \left(\frac{\dot{a}}{a} - \frac{\dot{c}}{c}\right) + O(\varepsilon)\right] dt.$$
Note that $\frac{\dot{a}}{a} - \frac{\dot{c}}{c}>0$ for all $t \in [t_{1}, t_{2}]$. We expand the above integral in Taylor series to obtain the estimate from below
\begin{equation}
\Delta H_{in} \sim H_{in}(t,E; 0)(\Delta t)^{2} + O(\varepsilon) \Delta t > K_{1} \mbox{min} \left(\frac{\delta^{2}}{\varepsilon^{2}}, 1 \right),
\label{deltahin}
\end{equation}
where $K_{1}>0$ is constant.

The error estimates (\ref{hinbound}), (\ref{houtbound}) imply that for $n_{1}$ iterations of $\Phi$ followed by $n_{2}$ iterations of $S_{\Gamma}$, the increase of $H_{in}$ is given by  $\Delta H_{in} > K_{1} \mbox{min} \left(\frac{\delta^{2}}{\varepsilon^{2}}, 1 \right) + O\left(\varepsilon^{4}, |\delta| (\varepsilon + |\delta|) \right)$. Since $|\delta|\gg \varepsilon^2$, the net gain of $H_{in}$ is strictly positive and is given by (\ref{deltahin}) indeed.  \end{proof}

\begin{proof}[Proof of Theorem 1.1.]
Take any initial condition $(\mathcal{E},t)$ on the cylinder $\Lambda$, where $\mathcal{E}$ is the non-rescaled kinetic energy of
the billiard particle. If $\mathcal{E} \geq \frac{C}{| \delta |}$ for a sufficiently large constant $C>0$, then one can find the scaling parameter
$\varepsilon$ such that the scaled energy $E=\mathcal{E} \varepsilon^{2}$ lies in the middle of the interval $[E_{1}, E_{2}]$ corresponding to the compact piece $\tilde{\Lambda}$ considered in the lemma above, and $| \delta| \gg \varepsilon^{2}$. Suppose that $\tilde{\Lambda}$ is sufficiently large, $E_{2} - E_{1}$ is sufficiently large. Since the function $a(t)$ is bounded, the ratio of $H_{in}/E$ is bounded away from zero and infinity. Therefore, by repeated application of Lemma 5.1 we find that the iterated function system $\{\Phi, S_{\Gamma} \}$ has an orbit, starting with our initial conditions $(E=(E_1+E_2)/2, t)$ for which the value of $H_{in}(t, E, \varepsilon)$ increases without bound until the orbit stays in $\tilde{\Lambda}$. In other words, this orbit will eventually get to the values of $E$ larger than $E_2$. 

For the non-rescaled energy $\mathcal{E}$ this means the multiplication at least to $2E_2/(E_1+E_2)>1$. Thus, we have shown that
for every initial condition $(\mathcal{E},t)$ with $\mathcal{E} \geq \frac{C}{| \delta |}$ there exists an orbit of the IFS 
with the end point $(\bar{\mathcal{E}},\bar{t})$ such that $\bar{\mathcal{E}}>q \mathcal{E}$ where the factor $q>1$ is independent of the initial point. By taking the end point of the orbit we just constructed as the new initial point, and so on, we continue the process up to infinity and obtain the orbit of the IFS for which the energy $\mathcal{E}$ tends to infinity.

The shadowing Lemmas 3.11, 3.12 of \cite{gidea2014general} imply the existence of a true orbit of the map $B$  that shadows the orbit of the IFS $\{\Phi, S_{\Gamma} \}$, so the energy $\mathcal{E}$ tends to infinity along this true orbit too (the shadowing lemmas of \cite{gidea2014general} require compactness of $\Lambda$, but it is easy to see that the result remains valid also in the situation where every
orbit of the inner map is bounded - so the Poincare recurrence theorem can be used, and this property holds true in our case, as the KAM curves bound every orbit of $\Phi$).
\end{proof}

\subsection*{Acknowledgements}
The authors would like to thank Vassili Gelfreich, Anatoly Neishtadt and Rafael Ramirez Ros for useful discussions. This work was supported by the grant 14-41-00044 of the Russian Science Foundation. Carl Dettmann's research is supported by EPSRC grant EP/N002458/1. Vitaly Fain's research is supported by University of Bristol Science Faculty Studentship grant. Dmitry Turaev's research is supported by EPSRC grant EP/P026001/1.

\appendix

\section{Melnikov function derivation}

In this section we provide a derivation of the Melnikov function $M_{1}$ given by (\ref{m1}); the Melnikov function $M_{2}$ given by (\ref{m2}) is derived in the same manner. Melnikov theory for $n$-dimensional diffeomorphisms with hetero-homoclinic connections to normally hyperbolic invariant manifolds has been developed in \cite{baldoma1998poincare}.  Let us briefly review this construction and adapt it for our slow-fast setup. Since for the map $B$ the invariant manifolds $W^{s,u}(\Lambda)$ are three-dimensional while the phase space is four-dimensional, one only needs a scalar Melnikov function to measure their splitting for small nonzero $\varepsilon, \delta$. Let us consider the case $\varepsilon >0$ and $\delta = 0$. Take $\tilde{\Lambda}$ as in (\ref{subsetnhim}). By symmetry we only need to consider the splitting of $W^{s,u}_{2}(\tilde{\Lambda})$. Theorem 3.4 in \cite{baldoma1998poincare} gives the following expression for the Melnikov function $M_{1}$:

\begin{equation}
\sum_{n= -\infty}^{n= \infty} { \langle DB_{0}^{n}(x_{-n})B_{1}(x_{-n-1}), \nu(x_{0}) \rangle },
\label{baldoma}
\end{equation}
where $x_{0} = (\varphi_{0}, \theta_{0}, E_{0}, t_{0}) \in W_{2}(\tilde{\Lambda})$ and $\nu(x_{0})$ is the vector forming a basis of an orthogonal space to the tangent space of the unperturbed three dimensional homoclinic manifold $W_{2}(\tilde{\Lambda})$. Since $B_{0}$ has a first integral $I$, we take $\nabla I(x_{0}) = \nu(x_{0})$. By the property of first integrals, observe that $\nabla I(x_{0}) = (DB_{0}(x_{0}))^{T} \nabla I(x_{1})$ and by induction $\nabla I(x_{0}) =  (DB_{0}^{n}(x_{0}))^{T} \nabla I(x_{n})$. Then rewriting (\ref{baldoma}) yields

\begin{align}
M_{1} = &\sum_{n= -\infty}^{n= \infty} {\langle DB_{0}^{n}(x_{-n}) B_{1}(x_{-n-1}), \nabla I(x_{0}) \rangle} \\
&= \sum_{n= -\infty}^{n= \infty} {\langle  B_{1}(x_{-n-1}), (DB_{0}^{n}(x_{-n}))^{T}\nabla I(x_{0}) \rangle} \\
&= \sum_{n= -\infty}^{n= \infty} {\langle  B_{1}(x_{n-1}), (DB_{0}^{-n}(x_{n}))^{T}\nabla I(x_{0}) \rangle} \\
& = \sum_{n= -\infty}^{n= \infty} {\langle  B_{1}(x_{n-1}), (DB_{0}^{-n}(x_{n}))^{T} (DB_{0}^{n}(x))^{T}\nabla I(x_{n}) \rangle} \\
&=\sum_{n= -\infty}^{n= \infty} {\langle  B_{1}(x_{n-1}), (DB_{0}^{n}(x) DB_{0}^{-n}(x_{n}))^{T}\nabla I(x_{n}) \rangle} \\
&= \sum_{n =  -\infty}^{n = \infty} {\langle B_{1}(x_{n-1}), \nabla I(x_{n}) \rangle },
\end{align}
\label{star}
which gives (\ref{m1}). Note that since $(E,t)$ are close to identity, effectively the summation above is performed only over the fast variables $(\varphi, \theta)$, while $(E,t)$ are held at an initial value, hence they enter the sum as ``fixed coefficients". For (\ref{m1}) to converge, we require the restriction of the perturbed components of fast variables $(\varphi, \theta)$ (i.e. $f_{1}, f_{2}, g_{1}, g_{2}$) to $\Lambda$ to vanish, and the form of $I$ given by (\ref{integs}) was chosen to ensure that $\nabla_{E,t} I(x) = 0$ on $\Lambda$ (i.e. the gradient of $I$ with respect to slow variables $(E,t)$ ).

The Melnikov function (\ref{m2}) is obtained by repeating the same steps above, by setting $\varepsilon =0$, and $\delta \neq 0$. Note that the $B_{2}$ terms are independent of $E$ and are evaluated at a given fixed moment of time $t$, hence effectively (\ref{m2}) corresponds to the Melnikov function for the $\delta$-polynomial perturbation of the elliptic billiard that has been studied by Delshams and Ramirez Ros in \cite{delshams1996poincare}. Since at $O(\varepsilon + | \delta|)$ the components of $B_{1}$ and $B_{2}$ given by formula (\ref{components}) simply add, the distance between perturbed invariant manifolds $W^{s,u}(\tilde{\Lambda})$ for $\varepsilon, \delta \neq 0$ is given by (\ref{modifieddist}).

\section{Computation of the Melnikov function $M_{1}$ for time-dependent ellipse}

The Melnikov functions (\ref{m1}), (\ref{m2}) can be computed analytically in terms of elliptic functions using the theory developed in \cite{delshams1996poincare} to give the formulas (\ref{melnikovellipse}) and (\ref{melnikdelta}) respectively. In this appendix we will derive the formula (\ref{melnikovellipse}). First, let us introduce some notation following \cite{delshams1996poincare} and quote the Proposition 3.1 we use from \cite{delshams1996poincare}.

Given a parameter $m \in [0,1]$, we have the following complete elliptic integrals of the first and second kind respectively:

$$K=K(m)=\int^{\pi/2}_{0}(1-m \sin(\theta))^{-1/2} d\theta,$$
$$E=E(m)=\int^{\pi/2}_{0}(1-m\sin(\theta))^{1/2} d\theta.$$
The incomplete elliptic integral of the second kind is

$$E(u)=E(u\mid m):=\int^{u}_{0} \dn^{2}(v\mid m) dv,$$
where the function $\dn$ is one of the Jacobian elliptic functions.

Further, $K'=K'(m):=K(1-m)$, $E'=E'(m):=E(1-m)$ and if one of $m, K, K', E, E', \frac{K'}{K}$ is given, all the rest are determined. We determine the parameter $m$ for a given $T, h >0$  by relation $$\frac{K'}{K} = \frac{T}{h}.$$
\
From now on, we do not explicitly write the dependence of $K, K', E, E', m$ on $T$ and $h$. We introduce a function $\chi_{T}(z)$,

$$\chi_{T}(z) = \left(\frac{2K}{h}\right)^{2}\left(\frac{E'}{K'} - 1\right)z + \left(\frac{2K}{h}\right)E\left(\frac{2Kz}{h} + K'i\right),$$
\\
with the properties: (1): $\chi$ is meromorphic on $\mathbb{C}$, (2): $\chi$ is $Ti$-periodic with $h$-periodic derivative, (3): the poles of $\chi$ are in the set $h\mathbb{Z} + Ti\mathbb{Z}$, all simple with residue $1$. The following formula is easily derived using the properties of elliptic functions:

\begin{equation}
\chi(i\pi/2 - \tau) -\chi(h+i\pi/2 - \tau) = -2.
\label{ch}
\end{equation}
It is also easily shown \cite{delshams1996poincare} that the following relation holds:

\begin{equation}
\chi (z +h) - \chi(z) = \frac{2\pi}{T}.
\label{chh}
\end{equation}

For an isolated singularity $z_{0} \in \mathbb{C}$ of a function $q$, we denote  by $a_{j}(q,z_{0})$ the coefficient of $(z-z_{0})^{-j}$ in the Laurent series of $q$ around $z_{0}$.

Then the following result holds. 

\begin{proposition}
 \cite{delshams1996poincare}, \cite{delshams1997melnikov}

Let $q$ be a function satisfying:
\begin{itemize}
\item $q$ is analytic in $\mathbb{R}$, with only isolated singularities in $\mathbb{C}$
\item $q$ is $Ti$-periodic for some $T>0$,
\item $|q(\tau)| \leq A e^{-c|\Re \tau|}$ when $| \Re\tau| \rightarrow \infty$, for some constants $A,c\geq0.$ 
\end{itemize}
Then, $Q(\tau) = \sum_{n = -\infty}^{\infty}  {q(\tau+hn)}$ is analytic in $\mathbb{R}$, has only isolated singularities in $\mathbb{C}$, and is doubly periodic with periods $h \neq 0$, where $h \in \mathbb{R}$ and $Ti$. Furthermore, $Q(\tau)$ may be expressed as

\begin{equation}
Q(\tau) = -\sum_{z \in \mbox{Sing}_{T}(q)}  {\sum_{j\geq0}  {\frac{a_{j+1}(q,z)}{j!}\chi_{T}^{j}(z-\tau)} },
\label{ss}
\end{equation}
\\
where $\mbox{Sing}_{T}(q)$ is the set of singularities of $q$ in $I_{T} = \{z \in \mathbb{C}: 0< \Im z<T\}$, and $\chi^{j}$ denotes the $j$-th derivative of $\chi$.

\label{prop}
\end{proposition}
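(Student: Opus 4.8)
The plan is to establish the expansion (\ref{ss}) by a single contour-integration argument, which at the same time gives the asserted analyticity and double periodicity of $Q$ and, crucially, pins down the expansion with no ambiguous additive constant. First I would record the elementary consequences of the hypotheses. The bound $|q(\tau+hn)|\le A\,e^{-c(|nh|-|\Re\tau|)}$ shows that $\sum_n q(\tau+hn)$ converges uniformly on every compact subset of $\mathbb{C}$ disjoint from the discrete set $\Sigma:=\mbox{Sing}_T(q)+h\mathbb{Z}+Ti\mathbb{Z}$; moreover the same decay forces $q$ to have only finitely many singularities in the closed strip $\{0\le\Im z\le T\}$ (an accumulation of singularities, or an essential singularity, with $|\Re z|\to\infty$ would contradict the bound), so $\mbox{Sing}_T(q)$ is finite and $\Sigma$ is genuinely discrete. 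Hence $Q$ is analytic on $\mathbb{R}$ and has only isolated singularities in $\mathbb{C}$. Re-indexing the sum gives $Q(\tau+h)=Q(\tau)$, and the termwise $Ti$-periodicity of $q$ gives $Q(\tau+Ti)=Q(\tau)$, so $Q$ is doubly periodic with periods $h$ and $Ti$.

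For the identity itself, fix a point $\tau$ lying in the open strip $0<\Im\tau<T$, not in $\Sigma$, and whose real part is not a half-period of the lattice (a generic condition, to be removed at the end by periodicity and meromorphic continuation), and consider
$$\oint_{\Gamma_N} q(w)\,\chi_T(w-\tau)\,dw ,$$
where $\Gamma_N$ is the positively oriented boundary of the rectangle $R_N=\{\,|\Re w|<(N+\tfrac12)h,\ 0<\Im w<T\,\}$. The contributions of the two horizontal edges cancel identically: both $q$ and $\chi_T(\cdot-\tau)$ are $Ti$-periodic, so the integrand takes equal values on $\{\Im w=0\}$ and $\{\Im w=T\}$, which are traversed in opposite senses. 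The contributions of the two vertical edges tend to $0$ as $N\to\infty$: there $|q(w)|\le A\,e^{-c(N+1/2)h}$, while $\chi_T(w-\tau)$ grows at most linearly in $N$ — writing $\chi_T(z)=\frac{2\pi z}{Th}+\psi(z)$, relation (\ref{chh}) makes $\psi$ honestly $h$-periodic, hence bounded on the strip at any fixed positive distance from the lattice, which is exactly the situation along the edges $\Re w=\pm(N+\tfrac12)h$ — so the integrand is $O(N e^{-cNh})$ on edges of fixed length $T$. Therefore $\oint_{\Gamma_N}\to 0$ as $N\to\infty$.

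By the residue theorem, $\oint_{\Gamma_N}$ equals $2\pi i$ times the sum of the residues of $q(w)\chi_T(w-\tau)$ inside $R_N$, and these residues fall into two families. At each $z\in\mbox{Sing}_T(q)$, where $\chi_T(\cdot-\tau)$ is holomorphic, expanding $q$ in its Laurent series about $z$ and using $\mathrm{Res}_{w=z}\big[(w-z)^{-j-1}\chi_T(w-\tau)\big]=\frac{1}{j!}\chi_T^{(j)}(z-\tau)$ shows the residue equals $\sum_{j\ge0}\frac{a_{j+1}(q,z)}{j!}\chi_T^{(j)}(z-\tau)$; since $\mbox{Sing}_T(q)$ is finite, for $N$ large these contribute precisely $-\,\big(\text{the right-hand side of }(\ref{ss})\big)$. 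At each point $w=\tau+hn$ with $|n|\le N$, the pole of $\chi_T(w-\tau)$ is simple with residue $1$ — this is exactly the stated pole structure of $\chi_T$ — while $q$ is holomorphic there, so the residue of the product is $q(\tau+hn)$; summing over $|n|\le N$ gives a partial sum of $Q(\tau)$. Letting $N\to\infty$ and using that the contour integral vanishes in the limit yields $0=2\pi i\big(Q(\tau)-\text{RHS of }(\ref{ss})\big)$, which is (\ref{ss}) at $\tau$; since both sides are meromorphic, the genericity assumptions on $\tau$ drop out. The one step I expect to require real care is the uniform linear bound on $\chi_T$ along the moving vertical edges, together with the companion observation that the decay hypothesis leaves only finitely many singularities of $q$ in a period strip; everything else is a standard Mittag--Leffler-type residue count. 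This contour route is also preferable to the alternative of matching principal parts and invoking Liouville's theorem, which would yield (\ref{ss}) only up to an undetermined additive constant.
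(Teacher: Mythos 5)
The paper does not prove this proposition --- it is quoted verbatim from \cite{delshams1996poincare,delshams1997melnikov} and used as a black box in Appendix~B --- so there is no in-paper argument to compare against. Your contour-integration proof is correct and self-contained: the horizontal-edge cancellation by $Ti$-periodicity, the $O(Ne^{-cNh})$ decay on the vertical edges (using the splitting $\chi_T(z)=\tfrac{2\pi z}{Th}+\psi(z)$ with $\psi$ $h$-periodic, which is exactly what relation (\ref{chh}) provides), and the two families of residues (value $q(\tau+hn)$ at the simple residue-$1$ poles of $\chi_T(\cdot-\tau)$, and $\sum_{j\ge0}\frac{a_{j+1}(q,z)}{j!}\chi_T^{(j)}(z-\tau)$ at each $z\in\mbox{Sing}_T(q)$) assemble precisely into (\ref{ss}). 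This is essentially the classical Mittag--Leffler-type argument underlying the cited result, and, as you note, it has the advantage over the ``match principal parts and apply Liouville'' route of producing the identity with no undetermined additive constant; in the original reference that constant is instead eliminated by the specific normalisation built into $\chi_T$. Two small points worth recording: convergence of $Q$ and of the vertical-edge estimate actually requires $c>0$ (the hypothesis as quoted allows $c=0$, under which the series need not converge), and if some $z\in\mbox{Sing}_T(q)$ is an essential singularity the inner sum over $j$ is infinite, so one should add a line noting that $\limsup_j|a_{j+1}(q,z)|^{1/j}=0$ while $|\chi_T^{(j)}(z-\tau)|/j!$ grows at most geometrically, which gives absolute convergence of the residue series; in the paper's applications all singularities are poles, so this is moot there.
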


We note that if $q$ is meromorphic, then $Q(\tau)$ is elliptic and can be computed analytically. Now we proceed to give the derivation of (\ref{melnikovellipse}). We will show that the sum (\ref{m1}) is an elliptic function with two periods $\log \lambda$ and $\pi i$ where $\lambda$ given by (\ref{evalue}), and then apply the above proposition to compute (\ref{m1}).

Let us denote by $I_{\varphi, \theta, E, t}$ the partial derivatives of integral $I$ given by (\ref{integs}) with respect to $\varphi$, $\theta$, $E$ and $t$ respectively. We will evaluate (\ref{m1}) over the unperturbed homoclinic $W_{2}(\tilde{\Lambda})$ for fixed $(E,t)$, hence we use the parameter $\xi = \tan \frac{\varphi_{n}}{2}$ and the equation of $W_{2}(\tilde{\Lambda})$ given by (\ref{w22}). Then the formula (\ref{parameterisation}) expresses $\theta$ as a function of $\varphi$ to give $W_{2}(\Lambda)$ in terms of $\xi$, holding $(E,t)$ fixed at some initial value $(E_{0},t_{0}) = (E_{n}, t_{n})$. Using $B_{1} = (f_{1},f_{2},f_{3},f_{4})^{T}$ from (\ref{components}) and expressing $f_{1,2,3,4}$ in terms of $\xi$ we write (\ref{m1}) as

$$M_{1}:= \sum_{n = -\infty}^{\infty}  { \langle \nabla I(B_{0}(x_{n}),B_{1}(x_{n}) \rangle  } = \sum_{n = -\infty}^{\infty}  {f_{1}(\xi_{n}) I_{\varphi}(\xi_{n+1}) + f_{2}(\xi_{n}) I_{\theta}(\xi_{n}) + f_{4}(\xi_{n}) I_{t}(\xi_{n+1}) } .$$
Note we have written  $\varphi$ and $\theta$ in terms of $\xi$ by virtue of (\ref{parameterisation}), and $x_{n} = (\varphi_{n}, \theta_{n}, E_{n}, t_{n})$ as before. We have suppressed the dependence of functions $f_{1,2,3,4}$ and integral $I$ on $(E,t)$. Note that the term $f_{3}I_{E}$ is identically zero since $I$ is independent of energy  $E$ and thus we omit it from the above sum. Since parametrisation (\ref{parameterisation}) yields $\xi_{n+1} = \lambda^{-1} \xi_{n}$, we may express the sum above purely in terms of $\xi_{n}$, i.e.

\begin{equation}
M_{1} = \sum_{n = -\infty}^{\infty}  {f_{1}(\xi_{n}) I_{\varphi}(\lambda^{-1}\xi_{n}) + f_{2}(\xi_{n}) I_{\theta}(\lambda^{-1}\xi_{n}) + f_{4}(\xi_{n}) I_{t}(\lambda^{-1}\xi_{n}) } = \sum_{n = -\infty}^{\infty}  {F(\xi_{n}) },  
\label{mtermsxi}
\end{equation}
for certain function $F$. Introduce a change of variables $\tau$ defined by $\e^{\tau}=\xi$. Since $\xi \in (0, \infty)$, then $\tau \in (-\infty, \infty)$, and for brevity put $h = \log \lambda$, as in \cite{delshams1996poincare}. Then $M_{1}$ becomes $M_{1} = \sum F(e^{\tau + nh}) = \sum \tilde{F}(\tau + nh) $, after swapping $n \mapsto -n$. Here $\tilde{F}(\tau) = f_{1}(\tau)I_{\varphi}(\tau) +f_{2}(\tau)I_{\theta}(\tau) + f_{4}(\tau)I_{t}(\tau)$.  Using the formulae (\ref{euclidzero}), (\ref{f1}), (\ref{f2}), (\ref{f3}), (\ref{integs}) together with (\ref{parameterisation}), we obtain the following expressions for $f_{1}I_{\varphi}|_{W_{2}}$, $f_{2}I_{\theta}|_{W_{2}}$ and $f_{4}I_{t}|_{W_{2}}$ in terms of $\tau$:

\begin{equation}
f_{1}(\tau)I_{\varphi}(\tau)|_{W_{2}} = \frac{-8 \dot{a}c^{2}\lambda(\lambda+1)\e^{2\tau}(\lambda^{2} - \e^{2\tau})^{2}}{v_{0}(1+\e^{2\tau})(\e^{2\tau} + \lambda^{3})(\e^{2\tau}+\lambda^{2})^{2}} + \frac{16\dot{b}ac^{2}\lambda^{2}\e^{2\tau}(\lambda-\e^{2\tau})(\lambda^{2}-\e^{2\tau})}{bv_{0}(1+\e^{2\tau})(\e^{2\tau}+\lambda^{3})(\e^{2\tau}+\lambda^{2})^{2}},
\label{f1Ipar}
\end{equation}
\
\begin{equation}
\begin{split}
f_{2}(\tau)I_{\theta}(\tau )|_{W_{2}} & =\underbrace{-\frac{16a^{2}c\lambda^{2}(\dot{a}ba^{-1}-\dot{b})\e^{2\tau}(\lambda^{2}-\e^{2\tau})(\lambda+\e^{2\tau})}{v_{0}b(\lambda^{3}+\e^{2\tau})(\lambda^{2}+\e^{2\tau})^{2}(\e^{2\tau}+1)}}_{1} + \underbrace{\frac{8ac\lambda\e^{2\tau}(-\dot{a}(\lambda+1)(\lambda^{2}-\e^{2\tau}) +2\lambda\dot{b}ab^{-1}(\lambda - \e^{2\tau})) }{v_{0}(\e^{2\tau}+\lambda^{3})(\e^{2\tau}+\lambda^{2})(\e^{2\tau}+1)}}_{2} \\
& + \underbrace{\frac{16c^{2}\lambda^{2}\e^{2\tau}(\dot{a}b(\lambda^{2}-\e^{2\tau})^{2} + 4\dot{b}a\lambda^{2}\e^{2\tau})}{v_{0}b(\e^{2\tau}+\lambda^{3})(\e^{2\tau}+\lambda^{2})^{2}(\e^{2\tau}+\lambda)}}_{3},
\end{split}
\label{f2Ithe}
\end{equation}
\
\begin{equation}
f_{4}(\tau) I_{t}(\tau)|_{W_{2}} = \frac{16a^{2}\lambda^{2}\e^{2\tau}(a\dot{b}b^{-1} - \dot{a})(\e^{2\tau}+\lambda)}{(\e^{2\tau}+\lambda^{3})(\e^{2\tau}+\lambda^{2})(\e^{2\tau}+1)}.
\label{f4It}
\end{equation}
\ 
Since in this appendix we are only considering the orbit of map $B$ restricted to $W_{2}$, we drop the subscript $W_{2}$ for brevity. It is clear that $f_{1}I_{\varphi}$, $f_{2}I_{\theta}$, $f_{4}I_{t}$ are analytic on $\mathbb{R}$, exponentially decay at infinity  and are $\pi i$-periodic on $\mathbb{C}$ with isolated  singularities that are poles, hence they are meromorphic, satisfying Proposition~\ref{prop}. Therefore we may apply Proposition~\ref{prop} to compute  $M_{1} = \sum {F(\tau)}$. We therefore take $T = \pi$,  $\frac{K'}{K} = \frac{\pi}{h}$ and $\chi_{\pi}(z) = \chi(z)$.

In the following computations, we will be using the following formula that can be shown using properties of elliptic functions:

\begin{equation}
\chi(z + lh + h/2) - \chi(z+lh) =1+ \frac{2K}{h}m \sn\left(\frac{2K\tau}{h}\right) \cd\left(\frac{2K\tau}{h}\right) = 1 + Y(\tau),
\label{chtrue}
\end{equation}
where $l \in \mathbb{Z}$, and we defined $Y(\tau) = \frac{2K}{h}m \sn\left(\frac{2K\tau}{h}\right) \cd\left(\frac{2K\tau}{h}\right)$ for brevity. Also, we have

\begin{equation}
\chi^{'}(\frac{\pi i}{2} - \tau) =\left(\frac{2K}{h}\right)^{2}\left( \frac{E'}{K'} - 1 +\dn^{2}\left(\frac{2K\tau}{h}\right)  \right) = X(\tau),
\label{jac2}
\end{equation}
where for brevity we put $X(\tau) =\left(\frac{2K}{h}\right)^{2}\left( \frac{E'}{K'} - 1 +\dn^{2}\left(\frac{2K\tau}{h}\right)  \right)$ and ' denotes differentiation w.r.t. $\tau$. In (\ref{chtrue}) and (\ref{jac2}) we have used the identities $\dn(u) = dn(-u)$, $\dn(u) = -\dn(u + 2K + 2K'i)$, $E(-u) = -E(u)$, $E(u+2K + 2K'i) = E(u) + 2E + 2i(K' - E')$, $E(K - u) = E - E(u) + m \sn(u) \cd(u)$ and the Legendre equality $EK' + E'K - K'K = \frac{\pi}{2}$ together with the formulas (\ref{ch}), (\ref{chh}).
 
We will compute separately the three components $\sum f_{1}I_{\varphi}$, $\sum f_{1} I_{\theta}$ and $\sum f_{4}I_{t}$ of the  expression for $M_{1}$ given by (\ref{mtermsxi}). Further, wherever it facilitates the computations, will consider individually the $\dot{a}$ and $\dot{b}$ components of the sums  $\sum f_{1}I_{\varphi}$, $\sum f_{1} I_{\theta}$ and $f_{4}I_{t}$.

Consider  $\sum f_{1}I_{\varphi}$. Take the $\dot{a}$ component of $\sum f_{1}I_{\varphi}$ with the coefficient $\frac{-8c^{2}\lambda(\lambda+1)}{v_{0}}$ factored out. Denote by $ a_{j}(f_{1}I_{\varphi}, z_{0}; \dot{a})$ the corresponding Laurent series coefficient of the $\dot{a}$ component of $ f_{1}I_{\varphi}$ at $z_{0}$ (here  $\frac{-8c^{2}\lambda(\lambda+1)}{v_{0}}$ is factored out).  We have:

\begin{itemize}
\item  Simple pole at $z= i\pi/2$ with $a_{1}(f_{1}I_{\varphi}, i\pi/2; \dot{a})=\frac{1+2\lambda + \lambda^{4}}{2(-1+\lambda)^{3}(1+\lambda)^{2}(1+\lambda + \lambda^{2})},$
\item  simple pole at $z = 3h/2 + i\pi/2$  with  $a_{1}(f_{1}I_{\varphi}, 3h /2 + i\pi/2; \dot{a})=\frac{-1-2\lambda - \lambda^{2}}{2(-1+\lambda)^{3}(1+\lambda + \lambda^{2})},$
\item double pole at $z = h + i\pi/2$ with $a_{1}(f_{1}I_{\varphi}, h + i\pi/2; \dot{a}) = \frac{2\lambda}{(-1+\lambda)^{3}(1+\lambda)^{2}}$ and $a_{2}(f_{1}I_{\varphi}, h + i\pi/2; \dot{a}) = \frac{1}{(\lambda - 1)^{2}(1+\lambda)}.$
\end{itemize}

Observe that $a_{1}(f_{1}I_{\varphi}, i\pi/2; \dot{a}) + a_{1}(f_{1}I_{\varphi}, 3 h/2 + i\pi/2; \dot{a}) = -a_{1}(f_{1}I_{\varphi}, h+i\pi/2; \dot{a})$. Thus we obtain the following formula for $\dot{a}$ component of  $\sum f_{1}I_{\varphi}$:

$$\sum f_{1} I_{\varphi}|_{\dot{a}} =\frac{8c^{2}\lambda(\lambda+1)}{v_{0}} \left(  -2a_{1}(i\pi/2) + a_{1}(3h/2 + i\pi/2)\left(1 + Y(\tau) \right) + a_{2}(h + i\pi/2)X(\tau) \right).$$
\
Similarly, considering the $\dot{b}$ component of  $\sum f_{1}I_{\varphi}$ (see (\ref{f1Ipar})), with factored out $\frac{16\dot{b}ac^{2}\lambda^{2}}{bv_{0}}$, we have:

\begin{itemize}
\item Simple pole at $z = \frac{i \pi}{2}$ with $a_{1}(f_{1}I_{\varphi},i\pi/2; \dot{b}) = \frac{\lambda^{2} +1}{2(\lambda+1)(\lambda-1)^{3}(\lambda^{2} + \lambda+1)},$
\item simple pole at $z = 3h/2 + i\pi/2$ with  $a_{1}(f_{1}I_{\varphi}, 3h/2 + i\pi/2; \dot{b}) = \frac{-(\lambda+1)(\lambda^{2}+1)}{2\lambda(\lambda-1)^{3}(\lambda^{2}+\lambda+1)},$
\item double pole at $h + i\pi/2$ with $a_{1}(f_{1}I_{\varphi}, h + i\pi/2; \dot{b}) = \frac{\lambda^{2}+1}{2\lambda(\lambda+1)(\lambda-1)^{3}}$ and $a_{2}(f_{1}I_{\varphi}, h + i\pi/2; \dot{b})  = \frac{1}{2\lambda(\lambda-1)^{2}}.$
\end{itemize}
Here $a_{1}(f_{1}I_{\varphi}, i\pi/2; \dot{b}) + a_{1}(f_{1}I_{\varphi}, 3h/2 + i\pi/2; \dot{b}) = -a_{1}(f_{1}I_{\varphi}, h + i\pi/2; \dot{b})$. Thus we have the following formula for $\dot{b}$ component of $\sum f_{1} I_{\varphi}$:

$$\sum f_{1} I_{\varphi}|_{\dot{b}} = -\frac{16\dot{b}ac^{2}\lambda^{2}}{bv_{0}} \left(-2a_{1}(i\pi/2) + a_{1}(3h/2 + i\pi/2)(1+ Y(\tau)) + a_{2}(h + i\pi/2)X(\tau)\right).$$
\
Adding the $\dot{a}$ and $\dot{b}$ contributions and simplifying gives the formula

\begin{equation}
\begin{aligned}
\sum { f_{1}I_{\varphi} } & = \frac{\dot{a}}{v}\left(\frac{-2b^{2}(3a^{4}+2a^{2}c^{2}+c^{4})}{ac(3a^{2} + c^{2})} + 2b^{2}X(\tau) - \frac{4a^{3}b^{2}}{c(3a^{2}+c^{2})}Y(\tau) \right) \\
& + \frac{\dot{b}}{bv}\left(\frac{2b^{2}(3a^{2}-c^{2})(a^{2}+c^{2})}{c(3a^{2}+c^{2})} - 2ab^{2}X(\tau) + \frac{4a^{2}b^{2}(a^{2}+c^{2})}{c(3a^{2}+c^{2})}Y(\tau)\right).
\end{aligned}
\label{f1i}
\end{equation}

Next we will compute $\sum f_{2}I_{\theta}$; the corresponding expression (\ref{f2Ithe}) consist of the sum of three parts. Take the part $(1)$ defined by braces, and factor out $\frac{-16a^{2}c \lambda^{2}(\dot{a}ba^{-1} - \dot{b})}{v_{0}b}$. Denote by $ a_{j}(f_{1}I_{\theta}, z_{0}; (1))$ the corresponding Laurent series coefficient of the $(1)$ component of $ f_{1}I_{\varphi}$ at $z_{0}$, (without coefficient $\frac{-16a^{2}c \lambda^{2}(\dot{a}ba^{-1} - \dot{b})}{v_{0}b}$). We have

\begin{itemize}
\item  Simple pole at $z = i\pi/2$ with $a_{1}(f_{2}I_{\theta}, i\pi/2; (1)) = \frac{\lambda^{2}+1}{2(\lambda^{2} - 1)^{2} (\lambda^{2} + \lambda + 1)}$,
\item  simple pole at $z = 3h/2 + i\pi/2$ with $a_{1}(f_{2}I_{\theta}, 3h/2 + i \pi/2; (1)) = \frac{(\lambda +1)^{2}}{2\lambda (\lambda-1)^{2}(\lambda^{2} + \lambda + 1)}$,
\item  double pole at $z = h + i\pi/2$ with $a_{1}(f_{2}I_{\theta}, h + i\pi/2; (1)) = \frac{-1-4 \lambda - \lambda^{2}}{2\lambda(\lambda^{2} - 1)^{2}}$ and $a_{2}(f_{2}I_{\theta}, h + i\pi/2; (1) = \frac{1}{2\lambda(\lambda^{2}-1)}$,
\end{itemize}
\
where $a_{1}(f_{2}I_{\theta}, i\pi/2; (1)) + a_{1}(f_{2}I_{\theta}, 3h/2 + i\pi/2; (1)) = -a_{1}(f_{2}I_{\theta}, h + i\pi/2; (1))$. Hence the sum for the part (1) is:

\begin{equation}
\begin{aligned}
\sum f_{2} I_{\theta}|_{(1)} &  = \frac{16a^{2}c\lambda^{2}(\dot{a}b a^{-1} - \dot{b})}{v_{0}b} \left( -2a_{1}(f_{2}I_{\theta}, i\pi/2; (1) ) + a_{1}(f_{2}I_{\theta}, 3h/2 + i\pi/2; (1) ) \left(1 + Y(\tau) \right)\right) \\ 
& +  \frac{16a^{2}c\lambda^{2}(\dot{a}b a^{-1} - \dot{b})}{v_{0}b} (a_{2}(f_{2}I_{\theta}, h + i\pi/2; (1)) X(\tau)).
\end{aligned}
\label{f2sum1}
\end{equation}
\
Let us consider the $\dot{a}$ and $\dot{b}$ components of the second component $(2)$ of (\ref{f2Ithe}) separately. Take contribution in $\dot{a}$. Factor out $\frac{-8a\dot{a}c\lambda(\lambda+1)}{v_{0}}$. Denote by $ a_{j}(f_{1}I_{\theta}, z_{0}; \dot{a}, (2))$ the corresponding Laurent series coefficient of the $\dot{a}$ coefficient of the $(2)$ component of $ f_{1}I_{\varphi}$ at $z_{0}$. We have

\begin{itemize}
\item Simple pole at $z = \frac{i\pi}{2}$ with $a_{1}(f_{2}I_{\theta}, i\pi/2; \dot{a}, (2)) = \frac{\lambda^{2}+1}{2(\lambda^{2}-1)(\lambda-1)(\lambda^{2} + \lambda+1)}$,
\item simple pole at $z = 3h/2 + i\pi/2$ with $a_{1}(f_{2}I_{\theta}, 3h/2 + i\pi/2; \dot{a}, (2)) = \frac{\lambda+1}{2(\lambda-1)^{2} (\lambda^{2}+\lambda+1)}$,
\item Simple pole at $z = h + i\pi/2$ with $a_{1}(f_{2}I_{\theta}, h + i\pi/2; \dot{a}, (2)) = \frac{-1}{(\lambda-1)^{2} (\lambda+1)}$,
\end{itemize}
\
where $a_{1}(f_{2}I_{\theta}, i\pi/2; \dot{a}, (2)) + a_{1}(f_{2}I_{\theta}, 3h + i\pi/2; \dot{a}, (2)) = -a_{1}(f_{2}I_{\theta}, h + i\pi/2; \dot{a}, (2))$. Thus the corresponding sum for part $(2)$ for $\dot{a}$ contribution is

\begin{equation}
\sum f_{2} I_{\theta}|_{\dot{a}, (2)} =  \frac{8a\dot{a}c\lambda(\lambda+1)}{v_{0}} \left(-2a_{1}(f_{2}I_{\theta}, i\pi/2; \dot{a}) + a_{1}(f_{2}I_{\theta}, 3h/2 + i\pi/2; \dot{a})(1+ Y(\tau)) \right).
\label{f2sum2a}
\end{equation}
\
For $\dot{b}$ component of $(2)$ of (\ref{f2Ithe}), first factor out $\frac{16a^{2}c\lambda^{2}\dot{b}}{bv}$. Using the notation as above, we find:

\begin{itemize}
\item Simple pole at $z = i\pi/2$ with $a_{1}(f_{2}I_{\theta}, i\pi/2; \dot{b}, (2)) = \frac{1}{2(\lambda-1)^{2}(\lambda^{2} + \lambda + 1)}$,
\item simple pole at $z = 3h/2 + i\pi/2$ with $a_{1}(f_{2}I_{\theta}, 3h/2 + i\pi/2; \dot{b}, (2)) = \frac{\lambda^{2} +1}{2\lambda(\lambda-1)^{2}(\lambda^{2} + \lambda+1)}$,
\item simple pole at $z = h +  i\pi/2$ with $a_{1}(f_{2}I_{\theta}, h + i\pi/2; \dot{b}, (2)) = \frac{-1}{2\lambda(\lambda-1)^{2}}$,
\end{itemize}
\
where $a_{1}(f_{2}I_{\theta}, i\pi/2; \dot{b}, (2)) + a_{1}(f_{2}I_{\theta}, 3h/2 + i\pi/2; \dot{b}, (2)) = -a_{1}(f_{2}I_{\theta}, h + i\pi/2; \dot{b}, (2))$. Thus the second component (2) sum for $\dot{b}$ is

\begin{equation}
\sum f_{2} I_{\theta}|_{\dot{b},(2)} =  -\frac{16a^{2}c\lambda^{2}\dot{b}}{bv} \left(-2a_{1}(f_{2}I_{\theta}, i\pi/2; \dot{b}) + a_{1}(f_{2}I_{\theta}, 3h/2 + i\pi/2; \dot{b})(1+ Y(\tau)) \right).
\label{f2sum2b}
\end{equation}
\
Finally, let us consider the $\dot{a}$ and $\dot{b}$ coefficients of part $(3)$ of (\ref{f2Ithe}) separately. For $\dot{a}$ component of (3), we factor out $\frac{16\dot{a}c^{2}\lambda^{2}}{v_{0}}$ and obtain

\begin{itemize}
\item  Simple pole at $z = 3h/2 + i\pi/2$ with $a_{1}(f_{2}I_{\theta}, 3h/2 + i\pi/2; \dot{a}, (3)) = -a_{1}(h/2 + i\pi/2) = \frac{-1 - \lambda}{2\lambda(\lambda-1)^{3}}$,
\item  Simple pole at $z = h + i\pi/2$ with $a_{1}(f_{2}I_{\theta}, h + i\pi/2; \dot{a}, (3)) = 0$, $a_{2}(h + i\pi/2) = \frac{1}{\lambda(\lambda-1)^{2}}$.
\end{itemize}

Hence we get the corresponding sum 

\begin{equation}
\sum f_{2} I_{\theta}|_{\dot{a},(3)} =  -\frac{16\dot{a}c^{2}\lambda^{2}}{v_{0}} \left(2a_{1}(f_{2}I_{\theta}, 3h/2 + i\pi/2; \dot{a}, (3)) + a_{2}(f_{2}I_{\theta}, h + i\pi/2; \dot{a}, (3)) X(\tau) \right).
\label{f2sum3a}
\end{equation}
\
For third component $(3)$ coefficient in $b$, we factor out $\frac{64a\dot{b}c^{2}\lambda^{4}}{bv_{0}}$ to yield

\begin{itemize}
\item Simple pole at $z = 3h/2 + i\pi/2$ with $a_{1}(f_{2}I_{\theta}, 3h/2 + i\pi/2; \dot{b}, (3) ) = -a_{1}(h/2 + i\pi/2)= \frac{1}{2\lambda^{2}(\lambda+1)(\lambda-1)^{3}}$,
\item simple pole at $z = h + i\pi/2$ with $a_{1}(f_{2}I_{\theta}, h + i\pi/2; \dot{b}, (3)) = 0$, $a_{2}(f_{2}I_{\theta}, h+ i\pi/2; \dot{b}, (3)) = \frac{-1}{4\lambda^{3}(\lambda-1)^{2}}$,
\end{itemize}
\
which gives the sum 

\begin{equation}
\sum f_{2} I_{\theta}|_{\dot{b},(3)} =  -\frac{64a\dot{b}c^{2}\lambda^{4}}{bv_{0}} \left(2a_{1}(f_{2}I_{\theta}, 3h/2 + i\pi/2; \dot{b}, (3)) + a_{2}(f_{2}I_{\theta}, h+ i\pi/2; \dot{b}, (3))X(\tau) \right).
\label{f2sum3b}
\end{equation}
\
Adding the expressions (\ref{f2sum1}), (\ref{f2sum2a}), (\ref{f2sum2b}), (\ref{f2sum3a}) and (\ref{f2sum3b})  and simplifying gives the formula

\begin{equation} 
\sum { f_{2}I_{\theta} } = \frac{\dot{a}}{v}\left(\frac{2b^{2}(9a^{4}+c^{4})}{ac(3a^{2}+c^{2})} - 6b^{2}X(\tau) + \frac{12a^{3}b^{2}}{c(3a^{2}+c^{2})}Y(\tau)\right) + \frac{\dot{b}}{bv}\left(6ab^{2} X(\tau) - \frac{2b^{2}(3a^{2}-c^{2})}{c} - \frac{4a^{2}b^{2}}{c}Y(\tau)\right).
\label{sumf2i}
\end{equation}

Finally we compute $\sum_{n = -\infty}^{\infty}  { f_{4}I_{t}}$. Let us take out the factor $16a^{2}\lambda^{2}(a\dot{b}b^{-1} - \dot{a})$. Then we obtain

\begin{itemize}
\item Simple pole at $z = i\pi/2$ with $a_{1}(f_{4}I_{t}, i\pi/2)=\frac{1}{2(-1-\lambda + \lambda^{3} + \lambda^{4})}$,
\item simple pole at $z = 3h/2 + i\pi/2$ with  $a_{1}(f_{4}I_{t}, 3\log \lambda /2 + i\pi/2)=\frac{-1-\lambda}{2\lambda(-1+\lambda^{3})}$,
\item simple pole at $z = h + i\pi/2$ with  $a_{1}(f_{4}I_{t}, \log \lambda + i\pi/2) = \frac{1}{2\lambda(-1+\lambda^{2})}.$
\end{itemize}

Observe that $a_{1}(f_{4}I_{t}, \log \lambda +i\pi/2) + a_{1}(f_{4}I_{t}, 3 \log \lambda/2 + i\pi/2) = -a_{1}(f_{4}I_{t}, i\pi/2)$. Therefore the sum is

$$\sum {f_{4}I_{t} } = -  \left(\frac{a\dot{b}}{bv} - \frac{\dot{a}}{v}\right)\left(16a^{2} \lambda^{2}\right) \left(a_{1}(3h/2 + i\pi/2)(3 + Y(\tau)) + 2a_{1}(h + i\pi/2)\right).$$

\
Upon simplifying, this yields

\begin{equation}
\sum {f_{4}I_{t} } = \left(\frac{a\dot{b}}{bv} - \frac{\dot{a}}{v}\right)\left(\frac{4ab^{2}(3a^{2}-c^{2})}{c(3a^{2}+c^{2})} + \frac{8a^{3}b^{2}}{c(3a^{2}+c^{2})}Y(\tau)\right).
\label{sumf4i}
\end{equation}

Finally adding (\ref{f1i}), (\ref{sumf2i}) and (\ref{sumf4i}) yields (\ref{melnikovellipse}).

\section{Computation of zero order distance $D_{0}$}

In this appendix we show that the free flight distance (\ref{euclidzero}) that is incorporated in the formulas for $f_{1}, f_{2}, f_{4}$ turns out to be an elliptic function when evaluated on the homoclinic manifold $W_{2}(\tilde{\Lambda})$ (see formula (\ref{w22})).

\begin{proposition}
The zero order in $\varepsilon, \delta$ free flight distance $D_{0}$ given by equation (\ref{euclidzero}) evaluated on $W_{2}(\tilde{\Lambda})$ expressed in terms of $\xi$ is

\begin{equation}
D_{0}|_{W_{2}(\tilde{\Lambda})}=\frac{2a(\lambda +\xi^{2}_{n})^{2}}{(\lambda^{2}+\xi^{2}_{n})(1+ \xi^{2}_{n})} = \frac{2a(1+\lambda \xi^{2}_{n+1})^{2}}{(1+\xi^{2}_{n+1})(1+\lambda^{2}\xi^{2}_{n+1})}.
\label{dist-c}
\end{equation}

\label{prop2}
\end{proposition}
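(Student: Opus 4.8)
The plan is to substitute the half-angle parametrisation of the ellipse directly into (\ref{euclidzero}) and simplify. With $\xi=\tan(\varphi/2)$ one has $\cos\varphi=(1-\xi^{2})/(1+\xi^{2})$ and $\sin\varphi=2\xi/(1+\xi^{2})$; applying this to $\varphi_{n}$ and $\varphi_{n+1}$ and then imposing the relation $\xi_{n+1}=\xi_{n}/\lambda$, which holds on $W_{2}(\tilde\Lambda)$ by (\ref{parameterisation}), turns the expression under the square root in (\ref{euclidzero}) into a rational function of $\xi_{n}$ with coefficients depending only on $a$, $b$, $\lambda$.

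The one step that is not bookkeeping is getting the sign convention on $W_{2}(\tilde\Lambda)$ right. Each homoclinic segment of $W_{2}$ passes through the focus $(-c,0)$, and combined with the identification $\varphi\sim\varphi+\pi$ this forces the genuine collision point at step $n+1$ to be the antipode of the point $(a\cos\varphi_{n+1},b\sin\varphi_{n+1})$ that the parametrisation assigns; equivalently, on $W_{2}(\tilde\Lambda)$ the differences $\cos\varphi_{n}-\cos\varphi_{n+1}$ and $\sin\varphi_{n}-\sin\varphi_{n+1}$ in (\ref{euclidzero}) must be replaced by the sums $\cos\varphi_{n}+\cos\varphi_{n+1}$ and $\sin\varphi_{n}+\sin\varphi_{n+1}$. (A quick consistency check: the telescoping series $\sum_{i}(2a-D_{0}|_{W_{2}})$ used in Appendix C converges only if $D_{0}\to 2a$ along the homoclinic orbit, which holds for the ``sum'' version and fails for the ``difference'' one.) A short computation then gives
\begin{equation*}
\cos\varphi_{n}+\cos\varphi_{n+1}=\frac{2(1-\xi_{n}^{2}\xi_{n+1}^{2})}{(1+\xi_{n}^{2})(1+\xi_{n+1}^{2})},\qquad \sin\varphi_{n}+\sin\varphi_{n+1}=\frac{2(\xi_{n}+\xi_{n+1})(1+\xi_{n}\xi_{n+1})}{(1+\xi_{n}^{2})(1+\xi_{n+1}^{2})},
\end{equation*}
and, using $1-\xi_{n}^{2}\xi_{n+1}^{2}=(1-\xi_{n}\xi_{n+1})(1+\xi_{n}\xi_{n+1})$ to extract a common factor $(1+\xi_{n}\xi_{n+1})^{2}$,
\begin{equation*}
D_{0}^{2}=\frac{4(1+\xi_{n}\xi_{n+1})^{2}}{(1+\xi_{n}^{2})^{2}(1+\xi_{n+1}^{2})^{2}}\left(a^{2}(1-\xi_{n}\xi_{n+1})^{2}+b^{2}(\xi_{n}+\xi_{n+1})^{2}\right).
\end{equation*}

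Now substitute $\xi_{n+1}=\xi_{n}/\lambda$ and write $\xi=\xi_{n}$; the powers of $\lambda$ cancel and
\begin{equation*}
D_{0}^{2}=\frac{4(\lambda+\xi^{2})^{2}}{(1+\xi^{2})^{2}(\lambda^{2}+\xi^{2})^{2}}\left(a^{2}(\lambda-\xi^{2})^{2}+b^{2}\xi^{2}(\lambda+1)^{2}\right).
\end{equation*}
The final collapse rests on a single identity: from $\lambda=(a+c)/(a-c)$ and $c^{2}=a^{2}-b^{2}$ one gets $c=a(\lambda-1)/(\lambda+1)$, hence $b^{2}(\lambda+1)^{2}=4\lambda a^{2}$, so that $a^{2}(\lambda-\xi^{2})^{2}+b^{2}\xi^{2}(\lambda+1)^{2}=a^{2}\left((\lambda-\xi^{2})^{2}+4\lambda\xi^{2}\right)=a^{2}(\lambda+\xi^{2})^{2}$. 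Therefore $D_{0}|_{W_{2}(\tilde\Lambda)}=2a(\lambda+\xi_{n}^{2})^{2}/[(1+\xi_{n}^{2})(\lambda^{2}+\xi_{n}^{2})]$, which is the first equality in (\ref{dist-c}); the second equality follows immediately by substituting $\xi_{n}=\lambda\xi_{n+1}$ and simplifying, and merely reflects the invariance of $D_{0}$ under interchanging the two endpoints. The only genuinely delicate point is the choice of branch on $W_{2}(\tilde\Lambda)$ discussed in the second paragraph; once that is settled the proof is the identity $b^{2}(\lambda+1)^{2}=4\lambda a^{2}$ plus routine algebra with the half-angle formulas.
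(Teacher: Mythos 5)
Your proof is correct and follows essentially the same route as the paper's: replace the differences in (\ref{euclidzero}) by sums because consecutive collision points on $W_{2}$ lie in antipodal halves of the ellipse under the $\varphi\Mod{\pi}$ identification, substitute the half-angle parametrisation with $\xi_{n+1}=\xi_{n}/\lambda$, and collapse the result using the identity $b^{2}(1+\lambda)^{2}=4\lambda a^{2}$. Your intermediate factorisation through $(1+\xi_{n}\xi_{n+1})^{2}$ and the consistency check on the sign convention via the convergence of the telescoping sum are nice additions but do not change the argument.
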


\begin{proof}
Recall that $W_{2}(\tilde{\Lambda})$ corresponds to the billiard orbit passing the focus at $(-c,0)$, and therefore each consecutive collision point on the boundary (\ref{ellipse}) is in alternate halves of the ellipse. In terms of variable $\varphi \Mod{\pi}$  the negative signs in the square brackets in (\ref{euclidzero}) change to positive and gives

$$D_{0}|_{W_{2}(\tilde{\Lambda})}=\sqrt{a^{2}[\cos(\varphi_{n}) + \cos(\varphi_{n+1})]^{2} + b^{2}[\sin(\varphi_{n}) + \sin(\varphi_{n+1})]^2}.$$
\\
Taking the parametrisation (\ref{parameterisation}) to express trigonometric functions of $\varphi_{n+1}$ and $\varphi_{n}$ in terms of $\xi_{n}$ and using the identity $\frac{b^2}{a^2}(1+\lambda)^2 \equiv 4\lambda$, with $\lambda$ given by (\ref{evalue}), we obtain:

$$D_{0}|_{W_{2}(\tilde{\Lambda})} = \sqrt{ a^{2} \left( \frac{4\lambda}{(1+\lambda)^{2}}\left( \frac{2\xi_{n}}{1 + \xi^{2}_{n}}+ \frac{2\lambda \xi_{n}}{\lambda^{2} + \xi^{2}_{n}} \right) ^{2} + \left( \frac{1 - \xi^{2}_{n}}{1 + \xi^{2}_{n}} + \frac{\lambda^{2} - \xi^{2}_{n}}{\lambda^{2} + \xi^{2}_{n}} \right)^{2} \right)}.$$
Taking a positive root of this expression since it is Euclidean distance and simplifying yields:

$$D_{0}|_{W_{2}(\tilde{\Lambda})}  = \sqrt{ a^{2}\left(\frac{4(\lambda + \xi_{n}^{2})^{4}}{(1+\xi_{n}^{2})^{2}(\lambda^{2}+ \xi_{n}^{2})^{2}}\right)}.$$
\end{proof}

\begin{remark}
Note that in the limit of $n \rightarrow \infty$ , (\ref{dist-c}) yields $D_{0}|_{W_{2}(\tilde{\Lambda})} =2a$, i.e. the length of the major axis of the ellipse,  which corresponds the hyperbolic fixed point $z$.
\end{remark}

\begin{remark}
Analogously we may calculate $D_{0}|_{W_{1}(\tilde{\Lambda})}$, which is:

\begin{equation}
D_{0}|_{W_{1}(\tilde{\Lambda})}=\frac{2a(1+\lambda\xi^{2}_{n})^{2}}{(1+\xi^{2}_{n})(1+\lambda^{2} \xi^{2}_{n})} =\frac{2a(\lambda +\xi^{2}_{n+1})^{2}}{(\lambda^{2}+\xi^{2}_{n+1})(1+ \xi^{2}_{n+1})}.
\label{distc}
\end{equation}
\end{remark}

\section{Scattering map computations}

We provide a derivation of truncated scattering map formula (\ref{Efib}). From the $E$ component of $S_{\Gamma}$ given by (\ref{scatteringE}), we need to compute the sum

$$\sum_{n=-\infty}^{\infty}  f_{3}(\cdot, E_{n}, t_{n})|_{\Lambda} - f_{3}(\varphi_{n}, \theta_{n}(\varphi_{n}), E_{n}, t_{n})|_{W_{2}(\bar{\Lambda})} = 2\sqrt{2E_{0}} \sum_{n = -\infty}^{\infty}  {- \dot{a}(t_{n+1}) + u_{n+1} \sin (\theta_{n+1}) },$$
where $u_{n+1} = \frac{\dot{a}b\cos^{2}(\varphi_{n+1}) + a\dot{b}\sin^{2}(\varphi_{n+1})}{\sqrt{ a^{2}\sin^{2}(\varphi_{n+1}) + b^{2}\cos^{2}(\varphi_{n+1})}}$ is the normal boundary speed at boundary point $\varphi_{n+1}$ evaluated on the homoclinic manifold $W_{2}(\tilde{\Lambda})$. Rewriting $\varphi$ in terms of $\xi$ using (\ref{parameterisation}) gives

$$u_{n+1} =  \frac{\dot{a}b(1-\xi^{2}_{n+1})^{2} + 4a\dot{b}\xi^{2}_{n+1}}{(1+\xi^{2}_{n+1})\sqrt{4c^{2}\xi^{2}_{n+1} + b^{2}(1+\xi^2_{n+1})^2}}.$$
\
Now $$4c^{2} \xi^{2}_{n+1} + b^{2}(1+\xi^{2}_{n+1})^2 \equiv \frac{b^{2}(\xi^{2}_{n+1}+\lambda)(\lambda \xi^{2}_{n+1}+1)}{\lambda},$$
\
and expressing $\sin(\theta)$ in terms of $\xi$ using (\ref{parameterisation}), we obtain

$$u_{n+1} \sin (\theta_{n+1}) - \dot{a}(t_{n+1}) = \frac{\lambda(\dot{a}b(1-\xi_{n+1}^{2})^{2}+4a\dot{b}\xi_{n+1}^{2})}{b(\xi^{2}_{n+1} + \lambda)(\lambda \xi^{2}_{n+1}+1)} - \dot{a}(t_{n+1}) = \frac{\dot{a} \xi^{2}_{n+1}(\lambda+1)^{2}}{(\xi^{2}_{n+1}+\lambda)(\lambda \xi^{2}_{n+1}+1)} + \frac{4a\dot{b}\lambda \xi^{2}_{n+1}}{b(\xi^{2}_{n+1}+\lambda)(\lambda \xi^{2}_{n+1}+1)}.$$

It is clear that we need to calculate $\sum_{n = -\infty}^{\infty} {\frac{\xi^{2}_{n}}{(\xi^{2}_{n}+\lambda)(\lambda \xi^{2}_{n}+1)} }$. After a change of variables $\xi = \e^{\tau}, h = \log \lambda$ as before, the function $\frac{\e^{2\tau}}{(\e^{2\tau}+\lambda)(\lambda \e^{2\tau}+1)}$ is bounded for $\Re\, \tau \rightarrow \infty$, is periodic in the complex plane with period $i\pi$, and it has simple poles at $\tau = \pm \log \lambda /2 + i\pi / 2$ with residues $\frac{\mp 1}{2(\lambda^{2}-1)}$, and by using the result $\chi(\frac{\log \lambda}{2} +i\pi/2 - \tau) -\chi(\frac{\log \lambda}{2}+i\pi/2 - \tau) = 2$, we have that $ \sum_{n = -\infty}^{\infty} {\frac{\xi^{2}_{n}}{(\xi^{2}_{n}+\lambda)(\lambda \xi^{2}_{n}+1)} } = \frac{1}{\lambda^{2}-1}.$ Therefore after some manipulations we get

$$u_{n+1} \sin (\theta_{n+1}) - \dot{a}(t_{n+1}) = \frac{b\dot{b}-a\dot{a}}{c},$$
which gives (\ref{Efib}).

The $t$ component of $S_{\Gamma}$ is computed using the geometric property of the ellipse, as given in Section 4.3.

\section{Computation of Melnikov function $M_{2}$}

As remarked previously, $M_{2}$ corresponds to the Melnikov function for the $O(\delta)$ quartic polynomial perturbation of the ellipse, evaluated for the 'fixed' boundary at time $t = t_{n}$. Therefore $M_{2}$ may be evaluated by standard methods developed for static perturbations of elliptic billiards using the generating function approach \cite{delshams1996poincare}. The parametrisation (\ref{ellipse}) of the boundary at fixed time $t_{n}$ for first order in $\delta$ coincides with the parametrisation in \cite{delshams1996poincare} given by equation (4.6) up to the adjustment of a constant factor. Thus, it is easy to essentially re-write the derivation of the Melnikov function in \cite{delshams1996poincare} to obtain 

\begin{equation}
M_{2} = -4m \frac{ab^{2}}{c^2} \left(\frac{2K}{h}\right)^{3} \dn \left(\frac{2K\tau}{h}\right) \sn \left(\frac{2K\tau}{h}\right) \cn \left(\frac{2K\tau}{h}\right).
\label{delshams}
\end{equation}
\
Notice that (\ref{delshams}) coincides with the Melnikov function for the perturbed ellipse in \cite{delshams1996poincare}, (see section 4.4.2 of \cite{delshams1996poincare}), upon setting $c^{2} = 1$, $c_{1}=b$.

\end{document}